\newcommand\qed{\hfill\vrule height6pt width6pt depth0pt}
\newcommand{\supp}{{\rm supp}}
\newcommand{\diam}{{\rm diam}}
\newcommand{\vphi}{{\varphi}}
\newcommand{\rf}[1]{{(\ref{#1})}}
\newcommand{\wt}[1]{{\widetilde{#1}}}
\newcommand{\res}{\hbox{ {\vrule height .22cm}{\leaders\hrule\hskip.2cm} } }
\newcommand{\RR}{\mathbb{R}}
\newcommand{\NN}{\mathbb{N}}
\newcommand{\ve}{{\varepsilon}}
\newcommand{\dist}{\mathrm{dist}}
\newcommand{\support}{\mathrm{supp}}
\renewcommand{\H}{\mathcal{H}}
\newcommand{\U}{\mathcal{U}}
\theoremstyle{plain}
\newtheorem{thm}{Theorem}[section]
\newtheorem{lem}[thm]{Lemma}
\newtheorem{cor}[thm]{Corollary}
\newtheorem{prop}[thm]{Proposition}
{\theorembodyfont{\rmfamily}
\newtheorem{defn}[thm]{Definition}}
{\theorembodyfont{\rmfamily}
\newtheorem{rem}[thm]{Remark}}
\newtheorem{case}{Case}
\newenvironment{proof}{\medskip\noindent{\bf Proof: }\rmfamily}{\medskip}
\numberwithin{equation}{section}
\begin{document}

\title{On the smoothness of H\"older-doubling measures}
\author{D.\ Preiss,\and  X.\ Tolsa,
\and T.\ Toro
\thanks{X.\ T.\ was partially supported by grants MTM2004-00519 (Spain) and 2005-SGR-00744 (Generalitat
de Catalunya). T.\ T.\ was partially supported by the NSF
under Grant DMS-0244834.}
\thanks{MSC: 28A75, 49Q15, 58C35}}
\date{}
\maketitle

\section{Introduction}

In this paper we consider the question of whether the doubling character of a
measure supported on a subset of $\RR^m$ determines the regularity of
its support (in a classical sense). This problem was studied in
\cite{DKT} for codimension 1 sets under the assumption that the support
be flat. Here we study the higher codimension case and remove the flatness
hypothesis.

In order to give precise statements we need to introduce some definitions.
Fix integer dimensions $0<n<m$ and a closed set $\Sigma\subset\RR^m$. For
$x\in\Sigma$ and $r>0$, set
\begin{equation}\label{eqn:1.2}
\theta_{\Sigma}(x,r) = \frac{1}{r}\inf\{D[\Sigma\cap B(x,r), L\cap B(x,r)];
L\hbox{ is an affine }n\hbox{-plane through }x\},
\end{equation}
where $B(x,r)$ denotes the open ball of center $x$ and radius $r$ in $\RR^m$,
and where
\begin{equation}\label{eqn:1.3}
D[E,F]  =  \sup\{\dist(y,F); y\in E\}
+ \sup\{\dist(y,E); y\in F\}
\end{equation}
denotes the usual Hausdorff distance between (nonempty) sets. If there is no
ambiguity over the set we are considering we write $\theta(x,r)$  rather than
$\theta_{\Sigma}(x,r)$.

\begin{defn}\label{defn:1.4}
Let $\delta>0$ be given. We say that the closed set $\Sigma\subset\RR^m$ is
$\delta$-\emph{Reifenberg flat} of dimension $n$ if for all compact sets
$K\subset\Sigma$ there is a radius $r_K>0$ such that
\begin{equation}\label{eqn:1.5}
\theta(x,r)\le\delta\qquad\hbox{for all}\qquad x\in K\quad\hbox{and}\quad
0<r\le r_K.
\end{equation}
\end{defn}

Note that it does not make sense to take $\delta$ large (like $\delta\ge
2$), because $\theta(x,r)\le 2$ anyway.

\begin{defn}\label{defn:1.6}
We say that the closed set $\Sigma\subset\RR^m$ is
\emph{Reifenberg flat with vanishing constant}
(of dimension $n$) if for every compact subset
$K$ of $\Sigma$,
\begin{equation}\label{eqn:1.7}
\lim_{r\to 0^+} \theta_K(r)=0,
\end{equation}
where
\begin{equation}\label{eqn:1.?}
\theta_K(r) = \sup_{x\in K} \theta(x,r).
\end{equation}
\end{defn}

Unless otherwise specified,
``measure'' here will mean ``positive Radon measure'', i.e. ``Borel measure
which is finite on compact sets.'' Let $\mu$ be a measure on $\RR^m$, set
\begin{equation}\label{eqn:1.8}
\supp (\mu) = \{x\in\RR^m; \mu(B(x,r))>0\hbox{ for all }r>0\}.
\end{equation}

For a measure $\mu$ on $\RR^m$, with support $\Sigma=
\supp(\mu)$ we define for $x\in\Sigma$, $r>0$ and $t\in (0,1]$
the quantity
\begin{equation}\label{eqn:1.11}
R_t(x,r) = \frac{\mu(B(x,tr))}{\mu(B(x,r))}-t^n,
\end{equation}
which encodes the doubling properties of $\mu$.

\begin{defn}\label{defn:1.6A}
A measure $\mu$ supported on $\Sigma$ is said to be asymptotically
optimally doubling if
for each compact set
$K\subset \Sigma$, $x\in K$, and $t\in [\frac{1}{2},1]$
\begin{equation}\label{eqn:1.10}
\lim_{r\to 0^+}\sup_{x\in K}|R_t(x,r)|=0.
\end{equation}
\end{defn}

The results in this paper can be summarized as follows: first
under the appropriate conditions on $\theta(x,r)$
(see (\ref{eqn:1.2})) the asymptotic behavior of $R_t(x,r)$ as $r$
tends to 0 fully determines the regularity of $\Sigma$.
Second for asymptotically doubling measures which are Ahlfors regular
flatness is an open condition.

We mention the local versions of
some of the previous results along these lines.

\begin{thm}[\cite{KT}, \cite{DKT}]\label{thm:1.16}
Let $\mu$ be an asymptotically doubling  measure supported on
$\Sigma\subset \RR^{m}$.
If $n=1,2$, $\Sigma$ is Reifenberg flat with
vanishing constant.
If $n\ge 3$, there exists a constant $\delta(n,m)$ depending
only on $n$ and $m$ such that if $x_0\in\Sigma$ and $\Sigma\cap B(x_0, 2R_0)$
is $\delta(n,m)$-Reifenberg flat, then $\Sigma\cap B(x_0, R_0)$
is Reifenberg flat with vanishing constant.
\end{thm}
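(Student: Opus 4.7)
The plan is to argue by contradiction via blow-up. Suppose the conclusion fails: there exist a compact $K \subset \Sigma$, points $x_k \in K$, scales $r_k \to 0^+$, and $\eta > 0$ such that $\theta(x_k, r_k) \geq \eta$. Passing to a subsequence, $x_k \to x_\infty \in K$. Consider the rescaled, renormalized measures
\[
\mu_k = \frac{r_k^n}{\mu(B(x_k, r_k))}\,(T_{x_k, r_k})_{\#}\mu, \qquad T_{x_k, r_k}(y) = \frac{y - x_k}{r_k}.
\]
Assumption \rf{eqn:1.10} furnishes uniform doubling estimates for the $\mu_k$ at all scales up to $1$, so standard compactness yields $\mu_{k_j} \wto \nu$ weakly for some nonzero Radon measure $\nu$ with $0 \in \spn$.

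Next, I would pass \rf{eqn:1.10} to the limit to conclude that for every $x \in \spn$, every $r > 0$, and every $t \in [1/2, 1]$,
\[
\nu(B(x, tr)) = t^n\,\nu(B(x, r)).
\]
Iterating the relation extends it to all $t \in (0,1]$, so $\nu(B(x, r)) = c\, r^n$ for every $x \in \spn$, every $r > 0$, and some $c > 0$; that is, $\nu$ is $n$-uniform.

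Here the classification of $n$-uniform measures takes over. For $n = 1$, a direct density/connectedness argument shows $\spn$ is an affine line; for $n = 2$, Preiss's theorem forces $\spn$ to be an affine $2$-plane, which handles the unconditional part. For $n \geq 3$ there exist exotic $n$-uniform supports (the Kowalski--Preiss light cone in $\RR^4$ being the prototype), so an additional hypothesis is needed. The assumption that $\Sigma \cap B(x_0, 2R_0)$ be $\delta(n,m)$-Reifenberg flat is inherited by $\spn$ in the blow-up limit, and $\delta(n,m)$ is chosen so that no non-flat $n$-uniform support can be $\delta(n,m)$-Reifenberg flat. Such a threshold exists by a compactness/rigidity argument in the space of $n$-uniform supports: the set of affine $n$-planes is closed and isolated from every other $n$-uniform support at a definite Hausdorff distance.

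Once $\spn$ is known to be an affine $n$-plane $L$ through $0$, weak convergence $\mu_{k_j} \wto \nu$ together with doubling-based lower density bounds upgrades to Hausdorff convergence of supports on compact sets; hence $(\Sigma - x_k)/r_k \cap B(0,1) \to L \cap B(0,1)$ in Hausdorff distance, giving $\theta(x_k, r_k) \to 0$, which contradicts $\theta(x_k, r_k) \geq \eta$. The main obstacle, and the reason for the dichotomy between $n \leq 2$ and $n \geq 3$, is precisely the classification of $n$-uniform measures: for $n \leq 2$ planes are the only examples, while for $n \geq 3$ the Kowalski--Preiss phenomena must be excluded by quantitative means, which is exactly what the threshold $\delta(n,m)$ accomplishes.
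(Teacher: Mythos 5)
Your blow-up argument is exactly the approach behind this theorem in \cite{KT} and \cite{DKT} (the paper itself quotes the result without proof, but reuses the same machinery in Section 4): rescale, extract a weak limit which is $n$-uniform by \cite{KT} [Theorem 2.2], and invoke Preiss's classification --- planes for $n=1,2$, and for $n\ge 3$ the quantitative isolation of flat measures among $n$-uniform measures, which is what the threshold $\delta(n,m)$ exploits. The only caveat is that this last isolation statement is not a routine ``compactness/rigidity argument'' but a deep theorem of Preiss \cite{Pr} (essentially Theorem \ref{teopreiss} here), so your proof is correct provided you cite it rather than claim to derive it.
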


The converse is also true.

\begin{thm}[\cite{DKT}]\label{thm:1.12}
If $\Sigma$ is a Reifenberg flat set with vanishing constant
there exists a
measure $\mu$ supported on $\Sigma$ which satisfies (\ref{eqn:1.10}).
\end{thm}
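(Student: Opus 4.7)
The plan is to take $\mu=\mathcal{H}^n\lfloor\Sigma$, the $n$-dimensional Hausdorff measure restricted to $\Sigma$, and to verify (\ref{eqn:1.10}) by a direct density comparison between $\mu$ and $n$-dimensional Lebesgue measure on the approximating planes provided by Reifenberg flatness.

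First I would invoke Reifenberg's topological disk theorem in its quantitative form: the vanishing-constant hypothesis gives, for each compact $K\subset\Sigma$, an $r_K>0$ and, for each $x\in K$ and $0<r\le r_K$, an affine $n$-plane $L(x,r)$ through $x$ and a bi-Hölder homeomorphism $\phi_{x,r}$ from the disk $L(x,r)\cap B(x,r)$ onto a subset of $\Sigma$ which contains $\Sigma\cap B(x,(1-\tau(r))r)$ and is contained in $B(x,(1+\tau(r))r)$, with $\tau(r)\to 0$ uniformly on $K$. In the vanishing regime, the iterative construction of $\phi_{x,r}$ through successive rotations of planes at dyadic scales yields bi-Hölder constants that tend to those of an isometry as $r\to 0$, because the per-scale rotation angles are controlled by $\theta(x,\cdot)\to 0$.

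With this in hand I would establish the key density estimate
\[
\mu(B(x,r))=\omega_n r^n\bigl(1+\eta(r)\bigr),\qquad \eta(r)\to 0\text{ uniformly on }K,
\]
where $\omega_n$ is the volume of the unit $n$-ball. The lower bound $\mu(B(x,r))\ge(1-o(1))\omega_n r^n$ comes from orthogonal projection: $\Sigma\cap B(x,r)$ projects $1$-Lipschitzly onto $L(x,r)$ with image containing $L(x,r)\cap B(x,(1-\theta(x,r))r)$, and projection cannot increase $\mathcal{H}^n$. The matching upper bound comes from parameterizing $\Sigma\cap B(x,r)$ through $\phi_{x,r}$ and using that its local distortion tends to $1$. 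Substituting into the definition of $R_t$ gives
\[
R_t(x,r)=t^n\cdot\frac{1+\eta(tr)}{1+\eta(r)}-t^n\longrightarrow 0
\]
uniformly on $K$ and for $t\in[\tfrac12,1]$, which is (\ref{eqn:1.10}).

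The main obstacle is the upper density bound. A general bi-Hölder map with exponent $<1$ does not preserve $\mathcal{H}^n$ (it only controls Hausdorff measure in a higher dimension), so the argument must exploit the vanishing-constant improvement in Reifenberg's construction to show that $\phi_{x,r}$ is in fact \emph{asymptotically} $(1+o(1))$-bi-Lipschitz at small scales. Tracking the accumulation of rotation errors through the iterative construction, and verifying that the resulting estimates are uniform in $x\in K$, is the core technical difficulty; once secured, the remainder of the argument is an easy substitution.
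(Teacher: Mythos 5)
The approach fails at its foundation, and the ``main obstacle'' you flag at the end is not a technical difficulty to be overcome but a genuine obstruction. Vanishing Reifenberg flatness only says $\theta(x,r)\to 0$; it imposes no summability on the flatness numbers across dyadic scales. The volume distortion of the Reifenberg parameterization accumulates multiplicatively, roughly like $\prod_k(1+C\epsilon_k^2)$ where $\epsilon_k$ controls the flatness at scale $2^{-k}$, and this product diverges whenever $\sum_k\epsilon_k^2=\infty$, which the hypothesis permits. Concretely, a snowflake-type curve built with turning angles $\epsilon_k\to0$ but $\sum_k\epsilon_k^2=\infty$ is Reifenberg flat with vanishing constant, yet the $\mathcal{H}^1$ measure of every arc is infinite. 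So $\mathcal{H}^n\res\Sigma$ need not be locally finite, hence is not even a Radon measure, and the density asymptotics $\mu(B(x,r))=(1+o(1))\,\omega_nr^n$ that you want are false in general --- indeed they would amount to a condition of the strength of (\ref{eqn:1.19}), which forces far more regularity ($C^{1,\beta}$ smoothness, by the very theorems of this paper) than mere vanishing flatness gives. Asymptotic $(1+o(1))$-bi-Lipschitzness of the Reifenberg map requires a Dini or square-summability condition on $\theta$ (as in Toro's and David--Toro's parameterization theorems), not merely $\theta\to0$; your lower bound via projection is fine, but the upper bound cannot be rescued.

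Note that the present paper gives no proof of this statement; it is quoted from \cite{DKT}. The construction there does not take Hausdorff measure: the measure $\mu$ is built from scratch as a weak limit of measures adapted to the approximating $n$-planes at successive dyadic scales, distributing mass so that the transfer from scale $2^{-k}$ to scale $2^{-k-1}$ respects $n$-dimensional scaling up to errors controlled by $\theta_K(2^{-k})$. The asymptotically optimal doubling (\ref{eqn:1.10}) then follows because the per-scale errors tend to zero --- summability is never needed --- while no Ahlfors-type density normalization of the form $(1+o(1))\,\omega_nr^n$ is claimed or true. If you want to salvage a Hausdorff-measure argument, you must add a hypothesis such as $\int_0^1\theta_K(r)^2\,\frac{dr}{r}<\infty$, which is strictly stronger than the theorem's assumption.
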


Precise asymptotic estimates on the quantity $R_t(x,r)$
yield stronger results about the regularity of $\Sigma$.

\begin{thm}[\cite{DKT}]\label{thm:1.17}
For each constant $\alpha>0$ we can find $\beta=\beta(\alpha)>0$ with the
following property. Let $\mu$ be a measure in $\RR^{n+1}$,
set $\Sigma=\support(\mu)$, and suppose that for each compact set
$K\subset\Sigma$, there is a constant $C_K$ such that
\begin{equation}\label{eqn:1.1}
\left|\frac{\mu(B(x,tr))}{\mu(B(x,r))} - t^n\right|\le
C_Kr^\alpha \ \ \ \hbox{for}\
r\in (0,1],\ t\in[\frac{1}{2},1]\ \hbox{ and}\  x\in K.
\end{equation}
If $n=1,2$, $\Sigma$ is
a $C^{1,\beta}$ submanifold of dimension $n$ in $\RR^{n+1}$.
If $n\ge 3$, for $x_0\in\Sigma$ if $\Sigma\cap B(x_0, 2R_0)$
is $\frac{1}{4\sqrt{2}}$-Reifenberg flat, then $\Sigma\cap B(x_0, R_0)$
is a $C^{1,\beta}$ submanifold of dimension $n$ in $\RR^{n+1}$.
\end{thm}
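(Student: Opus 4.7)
\medskip
\noindent\textbf{Plan.} I would proceed in three stages: reduce to Reifenberg flatness with vanishing constant, convert the H\"older control on $R_t$ into a polynomial decay rate for $\theta(x,r)$, and then pass from tilt decay of approximating planes to $C^{1,\beta}$ regularity of $\Sigma$.

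First, note that \rf{eqn:1.1} forces \rf{eqn:1.10}, so $\mu$ is asymptotically optimally doubling. If $n=1,2$, the first conclusion of Theorem \ref{thm:1.16} yields that $\Sigma$ is Reifenberg flat with vanishing constant, while if $n\ge 3$, the hypothesis that $\Sigma\cap B(x_0,2R_0)$ is $\tfrac{1}{4\sqrt{2}}$-Reifenberg flat (which we take to be at most the $\delta(n,n+1)$ of Theorem \ref{thm:1.16}) gives the same conclusion on $\Sigma\cap B(x_0,R_0)$. A standard by-product of this line of argument, carried out in \cite{DKT}, is that $\mu$ is locally Ahlfors $n$-regular and comparable to $\H^n\res\Sigma$.

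Second, and this is the quantitative heart of the proof, I would upgrade \rf{eqn:1.1} to a bound of the form $\theta(x,r)\le C r^{\alpha/2}$. Fix $x\in K$, a scale $r$, and a best-approximating affine $n$-plane $L=L_{x,r}$ realizing \rf{eqn:1.2}. Since $\Sigma\cap B(x,r)$ lies in the $\theta r$-neighborhood of $L$ and vice versa, I would estimate $\mu(B(x,r))-2^n\mu(B(x,r/2))$ in two ways. On one hand, \rf{eqn:1.1} with $t=1/2$ bounds it by $C r^\alpha\mu(B(x,r))$; on the other, comparison with the planar model $\H^n\res L$, using Ahlfors regularity, produces a defect of order $\theta(x,r)^2\,\mu(B(x,r))$, because $L$ splits $B(x,r)$ into two symmetric halves and the first-order deviation of $\Sigma$ from $L$ in the single normal direction averages out against this symmetry. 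Iterating over dyadic scales yields the polynomial decay $\theta(x,r)\le C r^{\beta_0}$ for some $\beta_0\sim\alpha/2$.

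Third, with polynomial decay of $\theta$ in hand, a Reifenberg-type argument comparing the common mass of $B(x,r/2)$ relative to $L_{x,r}$ and $L_{x,r/2}$ gives
\[
\Angle(L_{x,r_1},L_{x,r_2})\le C r_1^{\beta_0}\qquad\hbox{for}\quad r_1/2\le r_2\le r_1.
\]
Telescoping over dyadic scales produces a limit plane $T_x\Sigma$ with $\Angle(L_{x,r},T_x\Sigma)\le C r^{\beta_0}$, and this H\"older control on how the approximating planes rotate is exactly what is required to parametrize $\Sigma$ near $x$ as the graph of a $C^{1,\beta}$ function over $T_x\Sigma$ for some $\beta=\beta(\alpha)>0$.

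The main obstacle is the quadratic-in-$\theta$ estimate in the second stage. This is where the codimension-one hypothesis is essential: a hyperplane in $\RR^{n+1}$ has a single normal direction, so normal deviations of $\Sigma$ from $L$ cancel to leading order in the mass comparison, leaving only a $\theta^2$ contribution. In higher codimension no such cancellation is available, which is why the theorem, as stated, is restricted to hypersurfaces and why the gain is $\beta\sim\alpha/2$ rather than $\beta\sim\alpha$.
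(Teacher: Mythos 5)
Your outline gets the endpoints right (reduction to Reifenberg flatness with vanishing constant via Theorem \ref{thm:1.16}, and the final passage from a bound $\beta(x,\rho)\le C\rho^{\gamma}$ to a $C^{1,\beta}$ parametrization, which is Proposition \ref{prop:9.1}), but the quantitative heart of your argument --- the claim that the mass defect $\mu(B(x,r))-2^{n}\mu(B(x,r/2))$ is bounded below by $c\,\theta(x,r)^{2}\mu(B(x,r))$ at a single scale, by a symmetry/cancellation of normal deviations --- is a genuine gap, and as a pointwise single-scale inequality it is false. Two concrete obstructions: (i) for an $n$-uniform non-flat measure such as $\H^{3}$ on the Kowalski--Preiss cone, the defect vanishes identically at every point and scale while $\theta$ stays bounded below near the vertex, so no inequality of the form ``defect $\gtrsim\theta^{2}$'' can hold without further input; (ii) even in codimension one with $n=2$, a graph with a thin spike of height $\theta r$ and base width $w\ll r$ has $\theta(x,r)\gtrsim\theta$ but mass defect of order $\theta\,(w/r)$, which is arbitrarily small compared to $\theta^{2}$. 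The spike is of course eventually ruled out by the doubling hypothesis at other centers and smaller scales, but that is precisely the point: the information in \rf{eqn:1.1} must be used at all points and all scales simultaneously, and your proposal gives no mechanism for doing so.

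The actual route (in \cite{DKT}, and as generalized in this paper) is different. One first telescopes \rf{eqn:1.1} over powers of $t\in[\frac12,1)$ to control all $t\in(0,1]$ (as in \rf{eqn:1.101}--\rf{eqn:1.102}), then proves the density statement of Proposition \ref{prop:6.2}: the density $D(x)$ exists, is positive and log-H\"older, $\mu=D\,\H^{n}\res\Sigma$, and $\mu_{0}=\H^{n}\res\Sigma$ satisfies $|\mu_{0}(B(x,r))/\omega_{n}r^{n}-1|\le Cr^{\alpha/(1+\alpha)}$, i.e.\ \rf{eqn:1.19}. This converts the ratio condition into a density-ratio condition, which is then fed into the Kowalski--Preiss moments: the vector $b=b_{x_{1},r}$ of \rf{eqn:7.3} and the quadratic form $Q$ of \rf{eqn:7.4}, for which $\mathrm{Tr}(Q)\approx n$ and points of $\Sigma$ nearly satisfy the quadratic equation \rf{eqn:7.9}. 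The decay $\theta(x,\rho)\lesssim\rho^{\gamma}$ then comes from a dichotomy on $|b_{r}|$: if $b_{r}$ is small one estimates the smallest eigenvalue of $Q$ and shows $\Sigma$ hugs the plane spanned by the top eigenvectors; if $b_{r}$ is large, \rf{eqn:8.12} shows $\Sigma$ hugs the hyperplane orthogonal to $b_{r}$. Your ``single normal direction'' intuition is indeed why the codimension-one case is simpler (one needs only one nearly-orthogonal vector rather than $m-n$ of them), but it enters through this $b$/$Q$ dichotomy, not through a cancellation in a two-scale mass comparison. As written, your second stage would need to be replaced wholesale by this density-plus-moments argument.
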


For $n\geq 3$, the preceding theorem fails if one removes the flatness assumption. Indeed, Kowalski and Preiss \cite{KoP}
discovered that the $3$-dimensional Hausdorff $\H^3$ measure on the cone
$X=\{x\in \RR^4:x_4^2 = x_1^2+x_2^2+ x_3^2\}$
satisfies $\H^3(B(x,r)\cap X)=Cr^3$ for all $x\in X$ and all $r>0$. Clearly,
\eqref{eqn:1.1} holds in this case and $X$ is non smooth at the origin.

In this paper we extend Theorem \ref{thm:1.17} to general codimensions in
$\RR^m$, and moreover we prove that, when $n\geq3$, if one does not assume $\Sigma$ to be Reifenberg flat,
one still has that $\Sigma$ is smooth out of a small closed set (like in the case of the cone $X$). The precise
statement is the following.

\begin{thm}\label{thm:1.17A}
For each constant $\alpha>0$ we can find $\beta=\beta(\alpha)>0$ with the
following property. Let $\mu$ be a measure in $\RR^{m}$ supported on
$\Sigma$, and suppose that for each compact set
$K\subset\Sigma$, there is a constant $C_K$ such that
\begin{equation}\label{eqn:1.1A}
\left|\frac{\mu(B(x,tr))}{\mu(B(x,r))} - t^n\right|\le
C_Kr^\alpha \ \ \ \hbox{for}\
r\in (0,1],\ t\in[\frac{1}{2},1]\ \hbox{ and}\  x\in K.
\end{equation}
If $n=1,2$, $\Sigma$ is
a $C^{1,\beta}$ submanifold of dimension $n$ in $\RR^{m}$.
If $n\ge 3$, $\Sigma$ is a $C^{1,\beta}$ submanifold of dimension $n$
in $\RR^{m}$ away from a closed set ${\cal S}$ such that
${\cal H}^n({\cal S})=0$.
\end{thm}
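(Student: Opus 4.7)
The plan is to combine the blow-up analysis used to prove Theorem \ref{thm:1.17} with the classification and regularity theory for $n$-uniform measures. Fix $x \in \Sigma$, let $r_k \to 0^+$, and form the rescaled measures $\mu_{x,r_k}(E) = \mu(x + r_k E)/\mu(B(x,r_k))$. The uniform polynomial control in (\ref{eqn:1.1A}) forces every weak$^*$ subsequential limit $\nu$ to satisfy $\nu(B(z,r)) = c\,r^n$ for every $z \in \supp \nu$ and every $r > 0$, with a single constant $c > 0$; that is, every tangent measure of $\mu$ is $n$-\emph{uniform}. The polynomial rate in (\ref{eqn:1.1A}) also supplies a quantitative speed for the convergence $\mu_{x,r} \to \nu$, measured for instance by the Hausdorff distance between $\supp\mu \cap B(x,r)$ and a rescaling of $\supp\nu$.

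For $n = 1, 2$ the classification of Marstrand and Preiss asserts that every $n$-uniform measure in $\RR^m$ is a constant multiple of $\H^n \res L$ for some affine $n$-plane $L$. Hence every tangent of $\mu$ is flat, and the polynomial speed above upgrades this to a quantitative flatness estimate $\theta(x,r) \le C_K r^{\beta'}$ on compact $K \subset \Sigma$. Feeding this into a Reifenberg parameterization with Hölder tilt control, which extends the argument of \cite{DKT} from codimension $1$ to arbitrary codimension via only cosmetic changes, yields the $C^{1,\beta}$ submanifold structure.

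For $n \ge 3$ the family of $n$-uniform measures also contains non-flat objects such as the Kowalski--Preiss cone. Define
\[
\mathcal{S} = \{x \in \Sigma : \text{no tangent measure of $\mu$ at $x$ is flat}\}.
\]
A gap/rigidity property for $n$-uniform measures --- non-flat ones stay at uniform positive Hausdorff distance from flat ones --- together with the quantitative speed of tangent convergence shows that $\Sigma \setminus \mathcal{S}$ is open in $\Sigma$ and $\mathcal{S}$ is closed; moreover at any $x_0 \in \Sigma \setminus \mathcal{S}$ one has $\theta(x,r) \le C r^{\beta'}$ in a neighborhood of $x_0$, so the argument of the previous paragraph produces a local $C^{1,\beta}$ parameterization near $x_0$.

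The main remaining step, and the principal obstacle, is to prove $\H^n(\mathcal{S}) = 0$. The key input is Preiss's rectifiability theorem: every $n$-uniform measure in $\RR^m$ is $n$-rectifiable, so $\H^n$-almost every point of its support has a flat tangent. The plan is to transfer this from the tangent models back to $\Sigma$ itself. Cover $\mathcal{S}$ by balls $B(x_i,r_i)$ with $x_i \in \mathcal{S}$ and $r_i$ small; by (\ref{eqn:1.1A}) and the tangent analysis, on each such ball $\mu$ lies within $Cr_i^{\alpha'}$ of some non-flat $n$-uniform model $\nu_i$, so that $\mathcal{S}\cap B(x_i,r_i)$ is trapped in a thin neighborhood of the $\H^n$-null ``non-flat'' subset of $\supp\nu_i$. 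Combining this with the almost $n$-Ahlfors regularity $\mu(B(x,r))\approx r^n$ inherited from (\ref{eqn:1.1A}) and a Vitali-type covering argument yields $\H^n(\mathcal{S})=0$. The delicate point --- and where the polynomial rate $\alpha$ is essential --- is to quantify the closeness of $\mu$ to a tangent model tightly enough that the covering inequality does not lose too many powers of $r_i$.
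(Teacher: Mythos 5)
Your overall architecture (tangent measures are $n$-uniform, classification for $n\le 2$, openness of the flat set via the gap theorem for uniform measures, $\H^n$-nullity of the singular set) matches the paper's, but two steps that you treat as routine are in fact where the real work lies, and as written they are gaps.

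First, the passage from qualitative to quantitative flatness. Compactness plus the Marstrand--Preiss classification gives only that $\theta(x,r)\to 0$ (Reifenberg flatness with vanishing constant); it does not produce a power rate $\theta(x,r)\le C_Kr^{\beta'}$. The Hölder hypothesis \eqref{eqn:1.1A} does not by itself yield a quantified rate of convergence of $\mu_{x,r}$ to a tangent measure, and your assertion that it ``supplies a quantitative speed'' is exactly the statement that needs proof. The paper obtains the power decay of $\beta(x_1,\rho)$ through the moment analysis of Section 3: one first reduces (via Proposition \ref{prop:6.2}, a step you omit entirely) from the ratio condition \eqref{eqn:1.1A} to the density condition \eqref{eqn:1.19} for $\mu_0=\H^n\res\Sigma$, then uses the barycenter $b_r$ and quadratic form $Q_r$ to show that $\Sigma$ nearly satisfies a quadratic equation \eqref{eqn:8.8}, and finally runs a dichotomy on $|b_{r_i}|$ across $k=m-n$ nested scales $r_{i+1}=r_i^{1+\gamma_i}$ to manufacture $k$ almost-orthogonal normal directions, with a delicate choice of $3k$ exponents satisfying \eqref{eqn:014}--\eqref{eqn:021}. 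Dismissing the codimension extension as ``cosmetic changes'' to \cite{DKT} is wrong: the paper states explicitly that the codimension-one implementation is significantly simpler because there one needs only a single normal vector, and the multiscale scheme is the new content of the theorem.

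Second, your proof that $\H^n(\mathcal{S})=0$ is both unnecessary and unlikely to close. The covering argument you sketch would require that closeness of $\mu$ to a non-flat uniform model $\nu_i$ at the single scale $r_i$ confine $\mathcal{S}\cap B(x_i,r_i)$ to a thin neighborhood of the singular part of $\supp\nu_i$; but a point of $\Sigma$ lying near a smooth point of $\supp\nu_i$ can perfectly well belong to $\mathcal{S}$, since membership in $\mathcal{S}$ is determined by all scales, and the ``thinness'' is not quantified in a way that beats the full $\H^n$-content $r_i^n$ of the ball. The paper's route is direct: the density $\Theta^n(\mu_0,x)$ exists and equals $1$ everywhere, so Preiss's theorem gives that $\mu_0$ itself (not just its tangent models) is $n$-rectifiable; hence $\H^n$-a.e.\ $x\in\Sigma$ has an approximate tangent plane and thus lies in $\mathcal{R}=\{x:\limsup_{r\to0}\theta(x,r)=0\}$, giving $\H^n(\Sigma\setminus\mathcal{R})=0$ with no covering argument. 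Openness of $\mathcal{R}$ then follows, as you suggest, from propagating smallness of $\wt\beta_{2,\mu}$ via the $\ve_0$-gap theorem (Theorems \ref{mth} and \ref{b2t}) and applying the quantitative regularity theorem near each point of $\mathcal{R}$.
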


\noindent We would like to point out that condition (\ref{eqn:1.1A}) implies an
apparently stronger condition, namely that for each compact set
$K\subset\Sigma$, there is a constant $C_K$ depending on
$K$, $n$ and $\alpha$ such that
\begin{equation}\label{eqn:1.100}
\left|\frac{\mu(B(x,tr))}{\mu(B(x,r))} - t^n\right|\le
C_Kr^\alpha \ \ \ \hbox{for}\
r\in (0,1],\ t\in (0,1]\ \hbox{ and}\  x\in K.
\end{equation}
In fact assume that (\ref{eqn:1.1A}) holds
and let $\tau\in (0,1/2) $. There exits
$j\in\NN $, $j\ge 2$ so that $1/2^{j}\le \tau <1/2^{j-1}$ thus
$\tau^{\frac{1}{j}}=t\in [1/2,1/\sqrt{2})$. For $x\in K$, and
$r\in (0,1]$, (\ref{eqn:1.1A})
yields
\begin{eqnarray}\label{eqn:1.101}
t^{n(j-1)}\left|\mu(B(x,tr))-t^{n}\mu(B(x,r))\right|&\le & C_Kr^\alpha
t^{n(j-1)} \mu(B(x,r))\\
t^{n(j-2)}\left|\mu(B(x,t^{2}r))-t^{n}\mu(B(x,tr))\right|&\le & C_Kr^\alpha
t^{n(j-2)}
\mu(B(x,tr))\nonumber\\
...\ \ \ &\le & \ \ \ ...\nonumber\\
\left|\mu(B(x,t^{j}r))-t^{n}\mu(B(x,t^{j-1}r))\right|&\le &
C_Kr^\alpha \mu(B(x,t^{j-1}r)).\nonumber
\end{eqnarray}
Adding the above inequalities, we obtain that
\begin{equation}\label{eqn:1.101b}
\left|\mu(B(x,\tau r))-\tau^{n}\mu(B(x,r))\right|\le  C_Kr^\alpha
\mu(B(x,r))\sum_{i=0}^{j-1}
\left(\frac{1}{(\sqrt{2})^n}\right)^{i}
\end{equation}
which implies that for $x\in (0,R)$, $x\in K$ and $\tau\in(0, {1/2})$
\begin{equation}\label{eqn:1.102}
\left|\frac{\mu(B(x,\tau r))}{\mu(B(x,r))}-\tau^{n}\right| \le
CC_Kr^\alpha.
\end{equation}
The constant $C$ depends only on the dimension $n$.

A first step in the proof of Theorem \ref{thm:1.17A} is to prove that if
$\mu$ satisfies (\ref{eqn:1.1A}), then the restriction $\mu_0$ of $\H^n$ to
$\Sigma$ is locally finite, and $d\mu(x)=D(x)d\mu_0(x)$ for some positive
density $D(x)$ such that $\log D(x)$ is (locally) H\"older with exponent
$\frac{\alpha}{\alpha+1}$. Moreover, $\mu_0$ satisfies the stronger
requirement that for all compact sets $K\subset\Sigma$ there is a constant
$C_K$ such that
\begin{equation}\label{eqn:1.19}
\left |\frac{\mu_0(B(x,r))}{\omega_nr^n}-1\right |\le C_K r^{\frac{\alpha}{\alpha+1}}
\hbox{ for
all }x\in K\hbox{ and }0<r\le 1.
\end{equation}

We then deduce the conclusion of Theorem \ref{thm:1.17A} from
(\ref{eqn:1.19}), by a method inspired by \cite{KoP}, \cite{DKT} and \cite{Pr}.
We first show that if $\mu$ is a Radon measure supported on
$\Sigma\subset \RR^m$, the local behavior of the quantity
$\frac{\mu(B(x,r))}{\omega_nr^n}$ for $x\in\Sigma$ and $r\in(0,1]$ determines
the regularity of $\Sigma$ near flat points. The we prove that the set of flat
points is open and its complement has ${\cal H}^n$ measure 0 (see the two
theorems below). Here $\omega_n$ denotes the Lebesgue measure of
the unit ball in $\RR^n$.

\begin{thm}\label{thm:7.1}
For each $\alpha>0$ there exists $\beta=\beta(\alpha)>0$ with the following
property. Suppose $\Sigma=$supp$(\mu)\subset\RR^{m}$ for some positive
Radon measure $\mu$, and that for each compact set $K\subset\Sigma$ there is
a constant $C_K$ such that
\begin{equation}\label{eqn:7.2}
\left|\frac{\mu(B(x,r))}{\omega_nr^n}-1\right|\le C_Kr^\alpha
\end{equation}
for $x\in K$ and $0<r<1$.
If $n=1,2$, $\Sigma$ is
a $C^{1,\beta}$ submanifold of dimension $n$ in $\RR^{m}$.
If $n\ge 3$, there exists a constant $\delta(n,m)$ depending
only on $n$ and $m$ such that if $x_0\in\Sigma$ and $\Sigma\cap B(x_0, 2R_0)$
is $\delta(n,m)$-Reifenberg flat, then $\Sigma\cap B(x_0, R_0)$
is a $C^{1,\beta}$ submanifold of dimension $n$ in $\RR^{m}$.
\end{thm}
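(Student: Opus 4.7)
The plan is to convert the polynomial measure closeness in \eqref{eqn:7.2} into polynomial control on the geometry of $\Sigma$ via best-approximating $n$-planes, and then iterate scales in the style of \cite{KoP}, \cite{DKT}, \cite{Pr}.

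First, I would observe that \eqref{eqn:7.2} implies both halves of the ratio bound: writing $\mu(B(x,tr))/\mu(B(x,r))-t^{n}=[\mu(B(x,tr))/(\omega_n(tr)^n)-1]\,t^n/[\mu(B(x,r))/(\omega_n r^n)]-t^n[\mu(B(x,r))/(\omega_n r^n)-1]/[\mu(B(x,r))/(\omega_n r^n)]$, one gets $|R_t(x,r)|\le C_Kr^\alpha$ for all $t\in(0,1]$. Hence $\mu$ is asymptotically optimally doubling, and Theorem \ref{thm:1.16} applies: either unconditionally (if $n=1,2$), or under the Reifenberg flatness hypothesis (if $n\ge 3$), the set $\Sigma$ (resp.\ $\Sigma\cap B(x_0,R_0)$) is Reifenberg flat with vanishing constant. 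In particular, for every $x$ and every small $r$ there is a best-approximating affine $n$-plane $L(x,r)$ through $x$ with $D[\Sigma\cap B(x,r),L(x,r)\cap B(x,r)]\le r\,\theta(x,r)$ and $\theta(x,r)\to 0$.

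The core step is then to upgrade this to a polynomial decay $\theta(x,r)\le Cr^{\gamma}$ for some $\gamma=\gamma(\alpha)>0$. I would compare the measure $\mu$ on $B(x,r)$ with the flat model $\omega_n\mathcal{H}^n\res L(x,r)$: since $\mu$ is within $C_Kr^\alpha$ of $\omega_n s^n$ on every ball $B(x,s)$ with $s\le r$, integrating the local density against suitable radial test functions shows that $\mu$ is close, in an appropriate Wasserstein/weak sense and with polynomial rate $r^{\alpha'}$, to a flat $n$-dimensional measure on an $n$-plane. By a rigidity/compactness argument (the measure-theoretic flatness implies geometric flatness of $\mathrm{supp}(\mu)$ quantitatively, as in \cite{KoP}), the Hausdorff distance between $\Sigma\cap B(x,r)$ and $L(x,r)\cap B(x,r)$ is controlled by $r^{1+\gamma}$ for a suitable $\gamma$. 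Some care is needed in passing from the metric ``approximately flat measure'' information to ``approximately flat support,'' but this is the only truly nontrivial nonlinear step.

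Once $\theta(x,r)\le Cr^\gamma$ holds uniformly on compact sets, a standard Reifenberg-type computation yields $\Angle(L(x,r),L(x,r/2))\le C'r^\gamma$, so $L(x,r)$ converges geometrically to a unique tangent $n$-plane $L(x)$ through $x$ with $\Angle(L(x,r),L(x))\le C''r^\gamma$, and comparing planes at nearby base points gives $\Angle(L(x),L(y))\le C|x-y|^\beta$ for an explicit $\beta=\beta(\alpha,\gamma)>0$. Finally, the usual graph-construction argument from \cite{DKT}, which parameterizes $\Sigma$ locally over $L(x_0)$ using this H\"older field of tangent planes together with the polynomial flatness bound, shows that $\Sigma$ is locally the graph of a $C^{1,\beta}$ function over $L(x_0)$, proving the claimed submanifold regularity in both the $n\le 2$ and $n\ge 3$ cases.

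The main obstacle is the quantitative rigidity step: turning the $O(r^\alpha)$ closeness of $\mu(B(x,r))$ to $\omega_nr^n$ into $O(r^\gamma)$ closeness of $\Sigma$ to a best-approximating plane. The already-established Reifenberg flatness with vanishing constant gives the qualitative version, and the polynomial rate in \eqref{eqn:7.2} is what allows one to quantify the rate $\gamma$, but making this rigorous requires a careful blow-up/compactness argument combined with the Kowalski--Preiss characterization of flat measures.
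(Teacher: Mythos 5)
Your outer skeleton matches the paper's: (i) \eqref{eqn:7.2} gives $|R_t(x,r)|\le CC_Kr^\alpha$, so $\mu$ is asymptotically optimally doubling and Theorem \ref{thm:1.16} makes $\Sigma$ (resp.\ $\Sigma\cap B(x_0,R_0)$) Reifenberg flat with vanishing constant; (ii) this qualitative flatness is upgraded to a polynomial bound $\beta(x,\rho)\le C\rho^\gamma$; (iii) one invokes the $C^{1,\beta}$ parametrization result, Proposition \ref{prop:9.1}. Steps (i) and (iii) are exactly what the paper does.

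The gap is step (ii), which is the entire content of the paper's Section 3, and your proposal does not supply it. A ``rigidity/compactness argument'' cannot produce a polynomial rate: blow-up/compactness yields statements of the form ``for every $\eta$ there is $\delta$,'' i.e.\ only $o(1)$ flatness, which you already have from Theorem \ref{thm:1.16}; the power $r^\gamma$ requires an explicit quantitative mechanism. The paper's mechanism is the Kowalski--Preiss moment expansion: the first moment $b_{x_1,r}$ and the quadratic form $Q_{x_1,r}$ built from $\mu$ on $B(x_1,r)$ satisfy the approximate quadratic equation \eqref{eqn:7.9} on $\Sigma\cap B(x_1,r/2)$ with error $C|x-x_1|^3/r+CC_Kr^{2+\alpha}$, together with $|Tr(Q)-n|\le CC_Kr^\alpha$. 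Flatness is then extracted by a dichotomy in $|b_r|$: if $|b_r|\le r^{1+2\theta}$ the eigenvalues of $Q$ split into $k=m-n$ near $0$ and $n$ near $1$ (Lemma \ref{lem:8.39}), and the small-eigenvalue eigenspace is an almost-normal $k$-plane; if $|b_r|$ is large, $b_r/|b_r|$ gives only \emph{one} almost-normal direction. In codimension $k>1$ a single normal is not enough, so the paper iterates the dichotomy over $k$ nested scales $r_{i+1}=r_i^{1+\gamma_i}$ to manufacture $k$ almost-orthogonal normal vectors $\tau_1,\dots,\tau_k$, and must verify a web of compatibility conditions on the exponents $\theta_i,\gamma_i,\eta_i$ (inequalities \eqref{eqn:014}--\eqref{eqn:021}, resolved by the geometric choice \eqref{eqn:024}--\eqref{eqn:027}). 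None of this appears in your sketch, and in particular you never address how to produce an $(m-n)$-dimensional normal frame from the single scalar relation available at each scale --- which is precisely the new difficulty of the higher-codimension case relative to \cite{DKT}. Without the moment computation and the multiscale construction, the claim $\theta(x,r)\le Cr^\gamma$ is unsubstantiated.
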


\begin{thm}\label{thm:1.8A}
For each $\alpha>0$ there exists $\beta=\beta(\alpha)>0$ with the following
property. Suppose $\Sigma=$supp$(\mu)\subset\RR^{m}$ for some positive
Radon measure $\mu$, and that for each compact set $K\subset\Sigma$ there is
a constant $C_K$ such that
\begin{equation}\label{eqn:7.2AA}
\left|\frac{\mu(B(x,r))}{\omega_nr^n}-1\right|\le C_Kr^\alpha
\end{equation}
for $x\in K$ and $0<r<1$.
If $n=1,2$, $\Sigma$ is
a $C^{1,\beta}$ submanifold of dimension $n$ in $\RR^{m}$.
If $n\ge 3$, $\Sigma$ is a $C^{1,\beta}$ submanifold of dimension $n$
in $\RR^{m}$ away from a closed set ${\cal S}$ such that
${\cal H}^n({\cal S})=0$.
\end{thm}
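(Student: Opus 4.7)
For $n=1,2$, the hypothesis \eqref{eqn:7.2AA} is strictly stronger than asymptotic optimal doubling, so Theorem \ref{thm:1.16} gives that $\Sigma$ is Reifenberg flat with vanishing constant; at every $x_0\in\Sigma$ one then has a small ball on which the flatness constant is below the $\delta(n,m)$ of Theorem \ref{thm:7.1}, and that theorem yields the $C^{1,\beta}$ conclusion.

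For $n\ge 3$ I follow the roadmap outlined by the authors. Let $\delta=\delta(n,m)$ and set
\[
\mathcal{F}=\{x_0\in\Sigma:\exists\,R_0>0 \text{ with } \Sigma\cap B(x_0,2R_0) \text{ being }\delta\text{-Reifenberg flat}\},\qquad \mathcal{S}=\Sigma\setminus\mathcal{F}.
\]
I would first check that $\mathcal{F}$ is open in $\Sigma$: if $x_0\in\mathcal{F}$ with witness $R_0$ and $y\in\Sigma\cap B(x_0,R_0/4)$, then $B(y,r)\subset B(x_0,2R_0)$ for $r\le R_0/2$, and any $n$-plane through $x_0$ approximating $\Sigma\cap B(x_0,r')$ can be translated to one through $y$ with only an $O(|y-x_0|/r')$ loss in the flatness constant, giving $y\in\mathcal{F}$ after a slight shrinkage of scale. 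Hence $\mathcal{S}$ is closed, and applying Theorem \ref{thm:7.1} in a neighborhood of each $x_0\in\mathcal{F}$ shows $\Sigma\setminus\mathcal{S}$ is a $C^{1,\beta}$ submanifold of dimension $n$.

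The hard part, and the step I expect to be the main obstacle, is proving $\mathcal{H}^n(\mathcal{S})=0$. The plan is a Preiss-style blowup analysis: by \eqref{eqn:7.2AA}, every weak$^*$-limit $\nu$ of rescalings of $\mu$ at any $x\in\Sigma$ is $n$-uniform (i.e.\ $\nu(B(y,\rho))=\omega_n\rho^n$ for all $y\in\supp\nu$ and $\rho>0$), so by the Kowalski--Preiss classification $\supp\nu$ is either an affine $n$-plane or one of the rare non-flat uniform $n$-sets modeled on the cone $X$ of \cite{KoP}. The Hölder rate in \eqref{eqn:7.2AA} means that a single planar tangent at $x_0$ should upgrade to uniform-in-$y$ decay $\theta_\Sigma(y,r)\to 0$ near $x_0$, forcing $x_0\in\mathcal{F}$; hence at every $x_0\in\mathcal{S}$ all tangents are non-flat uniform. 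A Preiss--Kowalski--Preiss dimension-reduction then represents each such non-flat support as $Y_0\times\RR^k$ with $Y_0$ a non-flat uniform $(n-k)$-cone, $n-k\ge 3$, whose singular vertex set has Hausdorff dimension $\le n-3$; since blowups of blowups are blowups, iterating yields a covering of $\mathcal{S}$ by translates of such vertex sets and hence $\mathcal{H}^n(\mathcal{S})=0$. The real difficulties are propagating the Hölder rate through iterated blowups to obtain essentially unique tangents at each $x_0\in\mathcal{S}$, and turning the dimension reduction into a clean covering estimate; both are points at which the ideas of \cite{KoP}, \cite{DKT}, and \cite{Pr} cited by the authors must be combined.
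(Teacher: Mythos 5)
Your $n=1,2$ case and your decomposition of $\Sigma$ into points with a $\delta$-Reifenberg-flat neighborhood and their complement match the paper's strategy, and your openness argument for $\mathcal{F}$ is fine. But the two steps you flag as "the plan" are exactly where the proof lives, and both are problematic as proposed. First, the assertion that "a single planar tangent at $x_0$ should upgrade to uniform-in-$y$ decay $\theta_\Sigma(y,r)\to0$ near $x_0$" is the central technical content of the theorem; it is not a consequence of the H\"older rate in \eqref{eqn:7.2AA} at all. In the paper this propagation is Theorem \ref{mth} together with Theorem \ref{b2t} and Corollary \ref{cort}: smallness of the smoothed coefficient $\wt\beta_{2,\mu}$ at one scale propagates to all smaller scales and nearby centers, via a compactness/blow-up argument whose engine is Preiss's $\ve_0$-flatness criterion for $n$-uniform measures (Theorem \ref{teopreiss}) --- an argument using only asymptotic optimal doubling and Ahlfors regularity. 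Without some version of this, your claim that $x_0\in\mathcal{F}$ whenever one tangent is flat has no proof.

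Second, your route to $\H^n(\mathcal{S})=0$ cannot be carried out as described. The Kowalski--Preiss classification of $n$-uniform measures (flat or modeled on the cone $X$) is a codimension-one result, valid in $\RR^{n+1}$ only; no such classification of $n$-uniform measures in $\RR^m$ for general $m$ is available, so the dichotomy "every tangent at $x_0\in\mathcal{S}$ is a non-flat KP-type cone" and the subsequent "dimension reduction of vertex sets" have no foundation in the setting of the theorem. Moreover, "iterating blowups to cover $\mathcal{S}$ by translates of vertex sets" is not a covering argument. The paper's route is far shorter and avoids classification entirely: \eqref{eqn:7.2AA} forces the density $\lim_{r\to0}\mu(B(x,r))/(\omega_n r^n)$ to exist and equal $1$ at every $x\in\Sigma$, so Preiss's rectifiability theorem (which holds in arbitrary codimension) gives that $\Sigma$ is $n$-rectifiable and $\mu=\H^n\res\Sigma$; rectifiability plus Ahlfors regularity yields $\theta(x,r)\to0$ at $\H^n$-a.e.\ $x$, so $\mathcal{S}=\Sigma\setminus\{x:\limsup_{r\to0}\theta(x,r)=0\}$ has $\H^n$-measure zero, and it is closed because the propagation result plus Theorem \ref{thm:7.1} make its complement open. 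You should replace your classification/dimension-reduction step by this rectifiability argument, and supply the propagation lemma.
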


\section{Preliminaries}

In this section we state several results which will be used throughout the
paper. The codimension one versions appear in \cite{DKT}. The reader would
realize that the proofs given in there do not depend on the codimension.
Thus we do not include proofs.

\begin{prop}\label{prop:6.2}
Let $\alpha>0$ be given. Let $\mu$ be a measure supported on
$\Sigma\subset\RR^m$ and suppose that for all compact sets $K\subset
\Sigma$, there is a constant $C_K$ such that
\begin{equation}\label{eqn:6.3}
|R_t(x,r)|\le C_Kr^\alpha\hbox{ for }x\in K\hbox{ and }t, r\in(0,1].
\end{equation}
Then the density
\begin{equation}\label{eqn:6.4}
D(x)=\lim_{r\to 0^+} \frac{\mu(B(x,r))}{\omega_nr^n}
\end{equation}
(where $\omega_n$ denotes the $n$-dimensional Hausdorff measure of
the unit ball
in $\RR^n$) exists for all $x\in\Sigma$, and
\begin{equation}\label{eqn:6.5}
0<D(x)<+\infty\ \hbox{ for }\ x\in \Sigma.
\end{equation}
Moreover, $\log D(x)$ is locally H\"older; i.e., for all compact sets
$K\subset\Sigma$, we can find $C'_K$ such that
\begin{equation}\label{eqn:6.6}
|\log D(x)-\log D(y)|\le C'_K|x-y|^{\frac{\alpha}{1+\alpha}}\ \hbox{ for }\  x,
y\in K.
\end{equation}
Finally, denote by $\mu_0$ the restriction of $\H^n$ to $\Sigma$, i.e.,
$\mu_0=\H^n \res\Sigma$. Then $\mu_0$ is finite on compact sets,
\begin{equation}\label{eqn:6.7}
d\mu(x)=D(x)d\mu_0(x),
\end{equation}
and for each compact set $K\subset\Sigma$ there is a constant $C''_K$ such
that
\begin{equation}\label{eqn:6.8}
\left|\frac{\mu_0(B(x,r))}{\omega_nr^n}-1\right|\le C''_K
r^{\frac{\alpha}{1+\alpha}}\hbox{ for }x\in K \hbox{ and }0<r\le 1.
\end{equation}
\end{prop}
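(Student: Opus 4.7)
The plan is to extract from the pointwise control on $R_t(x,r)$ a quantitative rate of convergence for the normalized ball ratios, then upgrade this to H\"older continuity of $\log D$, and finally identify $\mu$ with a H\"older-density multiple of $\H^n\res\Sigma$ and transfer the estimate. For \emph{Step 1}, write $g(r) = \mu(B(x,r))/(\omega_n r^n)$, so that the defining identity for $R_t$ gives $g(tr)/g(r) = 1 + t^{-n} R_t(x,r)$. With $t=1/2$, the hypothesis yields $|g(r/2)/g(r) - 1|\le 2^n C_K r^\alpha$, hence $|\log g(r/2) - \log g(r)| \le C r^\alpha$ for $r$ small enough. Iterating along $r_j = 2^{-j} r_0$ and summing the convergent geometric series $\sum_j 2^{-j\alpha}$ shows both that $\{g(r_j)\}$ is Cauchy, so $D(x) := \lim_{r\to 0}g(r)$ exists, and that $|\log g(r) - \log D(x)| \le C r^\alpha$. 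Strict positivity and finiteness of $D(x)$ follow since $\mu(B(x,r_0))>0$ for every $r_0>0$ as $x\in\supp\mu$.

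For \emph{Step 2}, fix $x,y\in K$ and a scale $r$ with $|x-y|<r/2$. The inclusions $B(y, r-|x-y|)\subset B(x,r)\subset B(y, r+|x-y|)$, together with the Step 1 approximations of each ball mass by $D(\cdot)\omega_n s^n$ with multiplicative error $1+O(s^\alpha)$, give
\begin{equation*}
\left|\frac{D(x)}{D(y)} - 1\right| \le C_K\left(r^\alpha + \frac{|x-y|}{r}\right).
\end{equation*}
Balancing the two terms by taking $r = |x-y|^{1/(1+\alpha)}$ yields
\begin{equation*}
|\log D(x) - \log D(y)| \le C'_K\, |x-y|^{\alpha/(1+\alpha)}.
\end{equation*}

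For \emph{Step 3}, note that $D$ is continuous and, on each compact $K\subset\Sigma$, bounded above and away from zero. Since the lower density $\liminf_{r\to 0}\mu(B(x,r))/(\omega_n r^n) = D(x)$ is positive and uniform on $K$, the standard comparison between Radon measures and Hausdorff measures gives $\mu\ge c_K\,\H^n\res K$, so $\mu_0 = \H^n\res\Sigma$ is locally finite; Lebesgue--Besicovitch differentiation and continuity of $D$ then yield $d\mu = D\,d\mu_0$. To obtain \eqref{eqn:6.8}, use the Step 2 oscillation bound for $D$ on $B(x,r)$ to write
\begin{equation*}
\mu_0(B(x,r)) = \int_{B(x,r)}\frac{d\mu(y)}{D(y)} = \frac{\mu(B(x,r))}{D(x)}\bigl(1 + O(r^{\alpha/(1+\alpha)})\bigr),
\end{equation*}
and combine with $\mu(B(x,r)) = D(x)\omega_n r^n(1+O(r^\alpha))$; since $\alpha/(1+\alpha) < \alpha$, the dominant error is $r^{\alpha/(1+\alpha)}$. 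The main technical point is the scale choice in Step 2: one cannot directly take $r = |x-y|$, and the need to balance a regularity term $r^\alpha$ against a geometric term $|x-y|/r$ forces the exponent $\alpha/(1+\alpha)$, which is also why this weaker exponent ultimately appears in \eqref{eqn:6.8} rather than the $r^\alpha$ of the original hypothesis.
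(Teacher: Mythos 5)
Your Steps 1 and 2 are correct and follow the standard route (the paper itself omits the proof of this proposition, deferring to [DKT], where the argument has exactly this shape): the identity $g(tr)/g(r)=1+t^{-n}R_t(x,r)$, the telescoping of $\log g$ along dyadic radii with control of intermediate radii via $t\in[1/2,1]$, and the ball-inclusion comparison with the balancing choice $r=|x-y|^{1/(1+\alpha)}$ are precisely what produce \eqref{eqn:6.4}--\eqref{eqn:6.6} and the exponent $\alpha/(1+\alpha)$.

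The gap is in Step 3, at the identification $d\mu=D\,d\mu_0$. The elementary density-comparison theorems do give two-sided bounds $c_K\,\H^n\res K\le \mu\res K\le C_K\,\H^n\res K$, so $\mu_0$ is locally finite, $\mu$ and $\mu_0$ are mutually absolutely continuous, and $d\mu=h\,d\mu_0$ for some bounded Borel $h$. But Besicovitch differentiation only identifies $h(x)$ with $\lim_{r\to0}\mu(B(x,r))/\mu_0(B(x,r))$; to conclude that $h=D$ you must already know that $\mu_0(B(x,r))/(\omega_n r^n)\to 1$, i.e. that the $n$-density of $\H^n\res\Sigma$ equals $1$ a.e.\ --- which is the qualitative content of \eqref{eqn:6.8}, the very statement you then derive from $d\mu=D\,d\mu_0$. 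As written the argument is circular. The density of $\H^n\res\Sigma$ equals $1$ a.e.\ exactly when $\Sigma$ is $n$-rectifiable (Besicovitch--Marstrand--Mattila); here rectifiability is supplied by Preiss's theorem [Pr], since Step 1 shows the $n$-density of $\mu$ exists and is positive and finite everywhere on $\supp\mu$. Covering arguments alone only give $\Theta^{n}(\mu_0,x)\in[2^{-n},1]$ up to dimensional constants, not $=1$, so \eqref{eqn:6.7} cannot be reached by ``Lebesgue--Besicovitch differentiation and continuity of $D$'' alone. Once the rectifiability input is inserted, $\mu=D\,\H^n\res\Sigma$ follows, and your derivation of \eqref{eqn:6.8} from the oscillation bound on $D$ is correct.
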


\begin{rem}\label{rem:6.9}
When $C_K$ in (\ref{eqn:6.3}) is  large, (\ref{eqn:6.3}) only gives some
information on the doubling properties of $\mu$ at small scales (i.e., when
$r^\alpha<C^{-1}_K$). Thus, even though we did not say explicitly that
$R_t(x,r)$ is only controlled for $r$ small enough, this is implicit in
(\ref{eqn:6.3}).

Since (\ref{eqn:6.6}) and (\ref{eqn:6.8}) contain some amount of large-scale
information, we might be forced in some cases to take huge values of $C'_K$
and $C''_K$, that depend on the large-scale behavior of $\mu$ (and not
only on the $C_K$). This problem can easily be fixed by restricting the
domain of validity of (\ref{eqn:6.6}) to $|x-y|\le r_0$, where $r_0$ depends
on $C_K$, and similarly restricting (\ref{eqn:6.8}) to radii $0<r<r_0$. Then
we can get constants $C'_K$ and $C''_K$ that depend only on $C_K$. We could
also fix the problem by requiring that $\mu$ be doubling.
\end{rem}

\noindent
Let $\mu$ be an $n$-Ahlfors regular measure supported on $\Sigma\subset\RR^m$,
i.e suppose that
for each compact set $K\subset\Sigma$ there is
a constant $C_K>1$ such that
\begin{equation}\label{eqn:7.2A}
C_K^{-1}<\frac{\mu(B(x,r))}{\omega_nr^n}<C_K
\end{equation}
for $x\in K$ and $0<r<1$.
We follow \cite{KoP} and introduce some moments for Ahlfors regular measures.
Fix a compact set $K$ and for $x_1\in K$, define the vector $b=b_{x_1,r}$ by
\begin{equation}\label{eqn:7.3}
b=\frac{n+2}{2\omega_nr^{n+2}}\int_{B(x_1,r)}(r^2-|y-x_1|^2)(y-x_1)d\mu(y).
\end{equation}
Also define the quadratic form $Q=Q_{x_1,r}$ on $\RR^{m}$ by
\begin{equation}\label{eqn:7.4}
Q(x)=\frac{n+2}{\omega_nr^{n+2}}\int_{B(x_1,r)}\langle x, y-x_1\rangle^2d\mu(y)
\end{equation}
for $x\in\RR^{m}$.
In all our estimates we use the fact that
\begin{equation}\label{eqn:7.5}
|\mu(B(x,t))-\omega_nt^n|\le C_Kt^{n+\alpha}\hbox{ for }x\in\Sigma\cap\overline
B(x_1,1)\hbox{ and }0<t<1,
\end{equation}
which we get by applying (\ref{eqn:7.2}) with $K^\ast=\{x\in\Sigma;
\dist(x,K)\le 1\}$.

Roughly speaking the following proposition shows that if the density
ratio of $\mu$,
$\frac{\mu(B(x,r))}{\omega_nr^n}$ approaches 1 as $r$ tends to 0 in a
H\"older fashion then the points in the support of $\mu$ almost satisfy
a quadratic equation.

\begin{prop}\label{prop:7.6}
Let $\mu$ be a measure supported on $\Sigma\subset\RR^m$ such that
for each compact set $K\subset\Sigma$ there is
a constant $C_K$ such that
\begin{equation}\label{eqn:7.2B}
\left|\frac{\mu(B(x,r))}{\omega_nr^n}-1\right|\le C_Kr^\alpha
\end{equation}
for $x\in K$ and $0<r<1$. For $x_1\in K$ and $0<r<1$, let
\begin{equation}\label{eqn:7.7}
Tr(Q)=\frac{n+2}{\omega_nr^{n+2}}\int_{B(x_1,r)}|y-x_1|^2d\mu(y)
\end{equation}
denote the trace of $Q$. Then
\begin{equation}\label{eqn:7.8}
|Tr(Q)-n|\le CC_Kr^\alpha.
\end{equation}
Also, if $0<r<\frac{1}{2}$, for $x\in\Sigma\cap
B\left(x_1,\frac{r}{2}\right)$,
\begin{equation}
\label{eqn:7.9}
\left|2\langle b, x-x_1\rangle+Q(x-x_1)-|x-x_1|^2\right|\le C
\frac{|x-x_1|^3}{r}+CC_Kr^{2+\alpha}.
\end{equation}
\end{prop}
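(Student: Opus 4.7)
The plan is to prove \eqref{eqn:7.8} and \eqref{eqn:7.9} by computing weighted moment integrals via Fubini/layer cake, with the scale estimate \eqref{eqn:7.5} supplying the main asymptotics.

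For \eqref{eqn:7.8}, write $\int_{B(x_1,r)}|y-x_1|^2\,d\mu = r^2\mu(B(x_1,r)) - \int_{B(x_1,r)}(r^2-|y-x_1|^2)\,d\mu$. By Fubini the second integral equals $2\int_0^r t\,\mu(B(x_1,t))\,dt$, and inserting \eqref{eqn:7.5} turns this into $\tfrac{2\omega_nr^{n+2}}{n+2}+O(C_Kr^{n+\alpha+2})$. Combined with $\mu(B(x_1,r))=\omega_nr^n+O(C_Kr^{n+\alpha})$ we obtain $\int|y-x_1|^2\,d\mu=\tfrac{n\omega_nr^{n+2}}{n+2}+O(C_Kr^{n+\alpha+2})$; multiplying by $(n+2)/(\omega_nr^{n+2})$ yields \eqref{eqn:7.8}.

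For \eqref{eqn:7.9}, set $v=x-x_1$, $A(y)=r^2-|y-x_1|^2$, $B(y)=2\langle v,y-x_1\rangle-|v|^2$, so $r^2-|y-x|^2=A+B$, and evaluate $I:=\int_{B(x_1,r)}(r^2-|y-x|^2)^2\,d\mu$ two ways. Expanding $(A+B)^2=A^2+2AB+B^2$ and invoking the definitions of $b$ and $Q$ yields the exact algebraic identity
\[
I=\int A^2\,d\mu + \tfrac{8\omega_nr^{n+2}}{n+2}\langle v,b\rangle + \tfrac{4\omega_nr^{n+2}}{n+2}Q(v) - 2|v|^2\int A\,d\mu - 4|v|^2\langle v,M_1\rangle + |v|^4\mu(B(x_1,r)),
\]
where $M_1=\int_{B(x_1,r)}(y-x_1)\,d\mu$. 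On the other hand, since $x\in\Sigma$, layer cake at $x$ gives $\int_{B(x,r)}(r^2-|y-x|^2)^2\,d\mu=\tfrac{8\omega_nr^{n+4}}{(n+2)(n+4)}+O(C_Kr^{n+\alpha+4})$, and the same computation at $x_1$ evaluates $\int A^2\,d\mu$ to the same leading order.

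The two integrals of $(r^2-|y-x|^2)^2$ (over $B(x_1,r)$ versus $B(x,r)$) differ only through the symmetric difference $B(x_1,r)\triangle B(x,r)\subset\{y:r-|v|\le|y-x_1|\le r+|v|\}$. On this annulus $|r^2-|y-x|^2|\le 3r|v|$, while \eqref{eqn:7.5} at $x_1$ bounds the annular $\mu$-measure by $C|v|r^{n-1}+CC_Kr^{n+\alpha}$, so the discrepancy contributes $O(|v|^3r^{n+1})+O(C_Kr^{n+\alpha+4})$ to $I-\int A^2\,d\mu$. Substituting the expansion of $\int A\,d\mu$ into the term $-2|v|^2\int A\,d\mu$ produces precisely $-|v|^2\cdot\tfrac{4\omega_nr^{n+2}}{n+2}$ up to an $O(|v|^2C_Kr^{n+\alpha+2})$ error, while $|M_1|\le r\mu(B(x_1,r))\le Cr^{n+1}$ and $|v|<r/2$ absorb the remaining $M_1$ and $|v|^4$ corrections into $C|v|^3r^{n+1}$. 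Dividing the resulting identity by $\tfrac{4\omega_nr^{n+2}}{n+2}$ delivers \eqref{eqn:7.9}. The main technical step is the symmetric-difference annulus estimate, which is the source of the $|v|^3/r$ term on the right-hand side; the layer-cake evaluations and polynomial expansion are routine once the two-way comparison is set up.
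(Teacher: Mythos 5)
Your proof is correct, and it follows essentially the same route as the argument the paper defers to (the Kowalski--Preiss moment computation reproduced in Section 7 of \cite{DKT}): evaluate $\int(r^2-|y-x|^2)^2\,d\mu$ by layer cake using \eqref{eqn:7.5}, expand the weight around $x_1$ to make $b$ and $Q$ appear, and control the change of domain by the annulus $\{r-|v|\le|y-x_1|\le r+|v|\}$, which is exactly where the $|x-x_1|^3/r$ term comes from. The algebraic identity, the moment asymptotics, and the bookkeeping of error terms all check out.
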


For a measure $\mu$ supported on $\Sigma$ and satisfying (\ref{eqn:7.2B})
we introduce the quantity that allows us to measure the local flatness
of $\Sigma$
and prove its regularity.
Let $K\subset\Sigma$ be a compact set and let $x_1\in K$, for small radii
$\rho$ consider
\begin{equation}\label{eqn:8.1}
\beta(x_1,\rho) = \inf_P\{\frac{1}{\rho}\sup\{\dist(y,P); y\in\Sigma\cap
B(x_1,\rho)\}\}
\end{equation}
Here the infimum is taken over all affine $n$-planes $P$ through $x_1$.
In particular by (\ref{eqn:1.2})
\[
\beta(x_1,\rho)\le \theta(x_1,\rho).
\]
Note that (\ref{eqn:7.2}) implies that $\mu$
satisfies the hypothesis of
Theorem \ref{thm:1.16} which ensures that if $\Sigma\cap B(x_0, 2R_0)$ is
Reifenberg flat for some $x_0\in \Sigma$ then
\begin{equation}\label{eqn:8.2}
\Sigma\cap B(x_0, R_0)\hbox{ is Reifenberg flat with vanishing constant.}
\end{equation}
Hence for $x_1\in \Sigma\cap B(x_0, R_0)$, $\beta(x_1,\rho)$
converges to $0$ as $\rho\rightarrow 0$ uniformly on
compact sets. The key step in the proof of Theorem
\ref{thm:1.17A} is to show that if $\mu$ satisfies (\ref{eqn:1.1A}) then
there exists $\gamma>0$ such that for $\rho$ small
$\beta(x_1,\rho)<C_K \rho^{\gamma}$. This is also the main idea behind
the proof of Theorem \ref{thm:1.17}. Its implementation in the codimension
1 case is significantly simpler. Once the asymptotic behavior of $\beta$ has
been established we simply apply the following theorem which appears in
Section 9 in \cite{DKT}

\begin{prop}\label{prop:9.1}
Let $0<\beta\le 1$ be given. Suppose $\Sigma\cap B(x_0, 2R_0)$ is
a Reifenberg flat set with
vanishing constant of dimension $n$ in $\RR^m$ and that, for each
compact set $K\subset\Sigma$, there is a constant $C_K$ such that
\begin{equation}\label{eqn:9.2}
\beta(x,r)\le C_Kr^\beta\hbox{ for }x\in K\hbox{ and }r\le 1.
\end{equation}
Then $\Sigma\cap B(x_0, R_0)$ is a $C^{1,\beta}$
submanifold of dimension $n$ of $\RR^m$.
\end{prop}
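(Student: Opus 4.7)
The plan is to prove that each $x \in \Sigma\cap B(x_0, R_0)$ admits a tangent $n$-plane $T_x\Sigma$ that varies $\beta$-Hölder continuously with $x$, and then invoke the Reifenberg parametrization of $\Sigma$ to conclude that $\Sigma$ is locally a $C^{1,\beta}$ graph over $T_p\Sigma$ at each base point $p$.

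For a fixed $x \in K\cap B(x_0,R_0)$, set $r_k = 2^{-k}$ and let $P_k(x)$ be an affine $n$-plane through $x$ achieving $\beta(x, r_k)$ up to a factor of two, so that
\[
\sup_{y \in \Sigma\cap B(x, r_k)} \dist(y, P_k(x)) \le 2 C_K\, r_k^{1+\beta}.
\]
Both $P_k(x)$ and $P_{k+1}(x)$ pass through $x$ and approximate $\Sigma\cap B(x, r_{k+1})$ with error $\le 4 C_K r_k^{1+\beta}$. Because $\Sigma$ is Reifenberg flat with vanishing constant at $x$, for $k$ large the set $\Sigma\cap B(x, r_{k+1})$ contains points $y_1,\ldots,y_n$ whose displacements from $x$ span an $n$-dimensional parallelepiped of $n$-volume $\gtrsim r_{k+1}^n$. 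A standard linear-algebra estimate then forces $\Angle(P_k(x), P_{k+1}(x)) \le C r_k^\beta$, and summing the telescoping geometric series yields the existence of $T_x\Sigma := \lim_{k\to\infty} P_k(x)$ together with $\Angle(P_k(x), T_x\Sigma) \le C' r_k^\beta$.

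For the Hölder dependence on the base point, take $x, y \in K\cap B(x_0,R_0)$, choose $k$ with $r_k \le |x-y| < r_{k-1}$, and compare the translate of $P_k(x)$ through $y$ with $P_k(y)$. Both approximate $\Sigma\cap B(y, r_{k+1})$ up to $CC_K r_k^{1+\beta}$, so the same parallelepiped argument gives $\Angle(P_k(x), P_k(y)) \le C r_k^\beta \lesssim |x-y|^\beta$. Combined with the per-point estimate, this yields
\[
\Angle(T_x\Sigma, T_y\Sigma) \le C|x-y|^\beta.
\]
Since $\Sigma$ is Reifenberg flat with vanishing constant, near any $p \in \Sigma\cap B(x_0,R_0)$ it can be represented as the graph of a function $f:T_p\Sigma \to T_p\Sigma^\perp$ of small Lipschitz norm; the tangent plane $T_{(z, f(z))}\Sigma$ is the graph of $df_z$, so the displayed inequality translates into $\|df_z - df_{z'}\| \le C|z-z'|^\beta$, which is exactly the $C^{1,\beta}$ condition.

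The main obstacle is establishing the uniform angular estimate $\Angle(P_k(x), P_{k+1}(x)) \le C r_k^\beta$ from the two one-sided approximation bounds, since we only know that $\Sigma$ lies near each plane, not that each plane lies near $\Sigma$. This is where the vanishing-constant hypothesis is essential: it produces enough spread-out points in $\Sigma\cap B(x, r_{k+1})$ to pin down the approximating plane up to angle $r_k^\beta$. Once this comparison is in hand, the telescoping argument, the transfer across base points, and the passage to the $C^{1,\beta}$ graph via the Reifenberg parametrization are routine.
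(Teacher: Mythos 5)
The paper does not actually prove this proposition: it is quoted from Section~9 of \cite{DKT}, and the argument there is essentially the one you sketch --- choose near-optimal planes $P_k(x)$ at dyadic scales, upgrade the one-sided bound $\beta(x,r)\le C_Kr^{\beta}$ to an angle bound $\Angle(P_k(x),P_{k+1}(x))\lesssim r_k^{\beta}$ using the spread of points that vanishing-constant Reifenberg flatness guarantees inside $\Sigma\cap B(x,r_{k+1})$, telescope to get Hölder tangent planes, and then build the $C^{1,\beta}$ charts. So your route is the intended one, and your identification of where the two hypotheses enter is exactly right.

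One step is misattributed, though. You assert that vanishing-constant Reifenberg flatness alone lets you write $\Sigma$ near $p$ as a Lipschitz graph of small norm over $T_p\Sigma$. That is false in general: snowflake-type sets with slowly decaying angles are Reifenberg flat with vanishing constant yet need not be rectifiable, let alone Lipschitz graphs. The graph representation has to be extracted from the quantitative estimates you have already established, namely $\sup_{y\in\Sigma\cap B(x,r_k)}\dist(y,P_k(x))\le Cr_k^{1+\beta}$ together with $\Angle(P_k(x),T_x\Sigma)\le Cr_k^{\beta}$ and the Hölder variation of $x\mapsto T_x\Sigma$; these show that for $y,y'\in\Sigma\cap B(p,\rho)$ the component of $y-y'$ orthogonal to $T_p\Sigma$ is $O(\rho^{\beta}|y-y'|)$, so the orthogonal projection onto $T_p\Sigma$ is injective with bi-Lipschitz inverse, while the two-sided Reifenberg density property (plus a connectedness/continuity argument) gives surjectivity onto a neighborhood of $\pi(p)$. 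This construction, not the tangent-plane estimates, is the technical bulk of Section~9 of \cite{DKT}; calling it ``routine'' hides the part of the proof that genuinely needs writing out, although with your angle estimates in hand it does go through.
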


\section{Control on the flatness of $\Sigma$}

Let $\mu$ be a measure satisfying the hypothesis of Proposition
\ref{prop:7.6}. Assume
$\mu$ is supported on $\Sigma\subset\RR^m$,
and let $K\subset \Sigma$ be a fixed
compact set. Theorem \ref{thm:1.16} ensures that
for each small $\delta>0$, we can find $r_0\in(0,10^{-2}R_0)$ depending on $K$
such that
\begin{equation}\label{eqn:8.3}
\theta(x,r)\le\delta\hbox{ when }x\in\Sigma\cap B(x_0, R_0), \ \
\dist(x,K)\le 1,\hbox{ and
}0<r\le 10 r_0.
\end{equation}

As we proceed it might be
convenient to make the value of $r_0$ smaller (depending on the
constant $C_K$ in (\ref{eqn:7.2})), to make our estimates simpler. Without
loss of generality we may assume that $x_1=0\in \Sigma\cap B(x_0, R_0)$.
Let us recall the main
properties of $b$ and $Q$ that are used in this section. In particular we do
not need to know how $b$ and $Q$ are computed in terms of $\mu$ (see
(\ref{eqn:7.3}) and (\ref{eqn:7.4})). First,
$b=b_r\in\RR^{m}$ and
\begin{eqnarray}\label{eqn:8.4}
|b_r| & \le & \frac{n+2}{2\omega_nr^{n+2}}\int_{B(0,r)}r^2|y|d\mu(y)
 \le  \frac{(n+2)r}{2\omega_nr^n}\mu(B(0,r)), \\
|b_r|& \le & \frac{(n+2)r}{2}\left\{1+\frac{C_Kr^\alpha}{\omega_n}\right\}\le (n+2)r,
\nonumber
\end{eqnarray}
by (\ref{eqn:7.3}) and (\ref{eqn:7.5}), provided we assume that
$\frac{C_Kr^\alpha_0}{\omega_n}\le 1$. We do not explicitly need (\ref{eqn:8.4}), but the
homogeneity is important to keep in mind.

\noindent
Next, $Q$ is a quadratic form defined on $\RR^{m}$, (\ref{eqn:7.4}) and
(\ref{eqn:7.5}) ensure that for $x\in\RR^{m}$
\begin{eqnarray}\label{eqn:8.5}
0  \le  Q(x)& \le &\frac{n+2}{\omega_nr^{n+2}}\int_{B(0,r)}|x|^2r^2d\mu(y)
 \le  \frac{(n+2)|x|^2}{\omega_nr^n}\mu(B(0,r))  \\
& \le & (n+2)|x|^2(1+\omega^{-1}_nC_Kr^\alpha)\le (2n+4)|x|^2, \nonumber
\end{eqnarray}
and
\begin{equation}\label{eqn:8.6}
|Tr(Q)-n|\le CC_Kr^\alpha
\end{equation}
by (\ref{eqn:7.8}). It is convenient to set
\begin{equation}\label{eqn:8.7}
\widetilde Q(x)=|x|^2-Q(x).
\end{equation}
Then (\ref{eqn:7.9}) yields that
\begin{equation}\label{eqn:8.8}
|2\langle b_r,x\rangle - \widetilde Q(x)|\le Cr^{-1}|x|^3+CC_Kr^{2+\alpha}
\hbox{ for }x\in\Sigma\cap B(0,\frac{r}{2}).
\end{equation}
Initially we use (\ref{eqn:8.6}) and (\ref{eqn:8.8}) to derive more
information about $Q$ and $b$.
We work at scales of the form $\rho=r^{1+\gamma}$ smaller than $r$.
Here $\gamma$ is a positive constant that will assume several
different values.

It is
important to understand how (\ref{eqn:8.8}) is modified by a change of
scale. Set
\begin{equation}\label{eqn:8.9}
\Sigma_\rho=\frac{1}{\rho}\Sigma,
\end{equation}
and
\begin{equation}\label{eqn:8.10}
\Sigma'_\rho=\Sigma_\rho\cap B(0, \frac{r}{2\rho})=\frac{1}{\rho}(\Sigma\cap
B(0,\frac{r}{2})).
\end{equation}
Note that (\ref{eqn:8.3}) guarantees that we can choose an $n$-plane $L$
through the origin such that
\begin{equation}\label{eqn:8.21}
D[L\cap B(0,\rho), \Sigma\cap B(0,\rho)]\le\rho\theta(0,\rho)\le \rho\delta,
\end{equation}
where $D$ denotes the Hausdorff distance between sets, as in (\ref{eqn:1.3}).
(See also (\ref{eqn:1.2}) for the definition of $\theta(0,\rho)$).
Moreover for $z\in\Sigma'_\rho$ we can apply (\ref{eqn:8.8}) to $x=\rho z$
and get that
\begin{eqnarray}\label{eqn:8.11}
|2\langle \frac{b_r}{\rho}, z\rangle - \widetilde Q(z)| & = &
\rho^{-2}|2\langle b,x\rangle - \widetilde Q(x)| \\
& \le & C\rho^{-2}r^{-1}|x|^3 + CC_K\rho^{-2}r^{2+\alpha} \nonumber \\
& = & C\rho r^{-1}|z|^3 + CC_K\rho^{-2}r^{2+\alpha} \nonumber \\
& = & Cr^\gamma |z|^3 + CC_Kr^{\alpha-2\gamma} \nonumber
\end{eqnarray}
because $\rho=r^{1+\gamma}$. In particular,
\begin{equation}\label{eqn:8.12}
|\langle 2b_r r^{-1-\gamma}, z\rangle - \widetilde Q(z)| \le Cr^\gamma +
CC_Kr^{\alpha-2\gamma} =:\epsilon_0(r,\gamma)
\hbox{ for }z\in\Sigma_{r^{1+\gamma}}\cap B(0, \frac{1}{2}).
\end{equation}

To motivate the argument in the proof of Theorem {\ref{thm:1.17A}} we briefly
recall the main ideas in the proof of Theorem {\ref{thm:1.17}}.
(\ref{eqn:8.12}) encodes the information
required to estimate the quantity $\beta(0,\rho_0)$ defined in
(\ref{eqn:8.1}). In the codimension 1 case one needs to consider two cases.
Either $b$ in (\ref{eqn:8.12}) is very small, and then one obtains an
estimate on the smallest eigenvalue of $Q$ which allows one to say that
at the appropriate scale $\Sigma$ is very close to the plane normal to the
corresponding eigenspace. If $b$ is ``large'' then
at the appropriate scale $\Sigma$ is very close to the plane orthogonal to
$b$. In both cases one produces the normal vector which is orthogonal to the
plane $\Sigma$ is close to. In higher codimensions we need to produce
an $m-n$ orthonormal family of vectors whose span is orthogonal
to the $n$-plane $\Sigma$ is close
to, at a given scale.
The difficulty lies on the fact that there is only a single equation
at hand, namely (\ref{eqn:8.12}). To overcome this problem
we are forced to do a multiscale analysis of (\ref{eqn:8.12}).

Our first intermediate result is an estimate on $Q$ when $b_r$ is fairly
small.
Let us assume that
\begin{equation}\label{eqn:8.13}
|b|\le r^{1+2\theta}
\end{equation}
for some $\theta>0$. Then (\ref{eqn:8.12}) and (\ref{eqn:8.13}) ensure that
for $z\in\Sigma_{r^{1+\gamma}}\cap B(0,\frac{1}{2})$ we have
\begin{eqnarray}\label{eqn:8.14}
|\widetilde Q(z)| & \le & |\langle 2br^{-1-\gamma}, z\rangle| + Cr^\gamma +
CC_Kr^{\alpha-2\gamma} \\
& \le & r^{2\theta-\gamma} + Cr^\gamma + CC_Kr^{\alpha-2\gamma} \nonumber
\\
& =: & \epsilon_1(r,\theta,\gamma). \nonumber
\end{eqnarray}

Note that (\ref{eqn:8.14}) only provides useful information when
$\gamma$ satisfies
\begin{equation}\label{eqn:8.15}
0<\gamma<2\theta\;\;\hbox{ and }\;\;2\gamma<\alpha.
\end{equation}

Choose an orthonormal basis $(e_1, \ldots, e_m)$ of $\RR^{m}$ that
diagonalizes $Q$. Thus
\begin{equation}\label{eqn:8.16}
Q(z)=\sum^m_{i=1}\lambda_i\langle z,e_i\rangle^2
\end{equation}
for $z\in\RR^{m}$. Without loss of generality we may assume that
\begin{equation}\label{eqn:8.17}
\lambda_1\le\lambda_2\cdots\le \lambda_m.
\end{equation}
Note that $\lambda_1\ge 0$ because $Q(z)\ge 0$ (see (\ref{eqn:7.4}) or
(\ref{eqn:8.5})). Also, by (\ref{eqn:8.6})
\begin{equation}\label{eqn:8.18}
\sum^m_{i=1}\lambda_i=Tr(Q)\le n+CC_Kr^\alpha.
\end{equation}
In particular, by (\ref{eqn:8.17}) if $k=m-n$
\begin{equation}\label{eqn:8.19}
m\lambda_1\le Tr(Q)\le n+CC_Kr^\alpha<n+\frac{1}{2},
\end{equation}
\begin{equation}\label{eqn:8.19A}
(n+1)\lambda_k\le Tr(Q)\le n+CC_Kr^\alpha<n+\frac{1}{2},
\end{equation}
provided we take $r_0$ small enough. Thus
\begin{equation}\label{eqn:8.20}
0\le\lambda_1\le \frac{2n+1}{2m}\ \ \hbox{ and}\ \
\lambda_k\le\frac{2n+1}{2n+2}.
\end{equation}
This is just a crude first step. Our next goal is to obtain more precise
estimates on $Q$, when (\ref{eqn:8.13}) holds, i.e., $|b|\le r^{1+2\theta}$,
under the additional constraint that
\begin{equation}\label{eqn:8.38}
0<\theta<\frac{\alpha}{3}.
\end{equation}

\begin{lem}\label{lem:8.39}
Suppose that (\ref{eqn:8.13}), (\ref{eqn:8.15}) and (\ref{eqn:8.38}) hold.
Let $k=m-n$.
For $r_0$ small enough and $\epsilon_2(r,\theta,\gamma)=n
a^{-2}\epsilon_1(r,\theta,\gamma)$, where $a$ is a constant
that only depends on $n$ and $m$, we have
\begin{eqnarray}
0  \le  \sum_{i=1}^k\lambda_i &\le& \epsilon_2(r,\theta,\gamma) +
CC_Kr^{\alpha}, \label{eqn:8.40} \\
|\lambda_{k+i}-1| & \le & \epsilon_2(r,\theta,\gamma) +r^{\alpha/2}
\ \hbox{ for }\ 1\le i\le n,
\label{eqn:8.41}
\end{eqnarray}
and
\begin{equation}\label{eqn:8.42}
|\widetilde Q(z)-\sum_{l=1}^k\langle z,e_l\rangle^2|
\le \left(\epsilon_2(r,\theta,\gamma) + r^{\alpha/2}\right)
|z|^2\hbox{ for
}z\in\RR^{m}.
\end{equation}
\end{lem}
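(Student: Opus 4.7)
The plan is to combine the pointwise bound $|\widetilde Q|\le \epsilon_1$ on $\Sigma_{r^{1+\gamma}}\cap B(0,\tfrac12)$ from (8.14) with the Reifenberg flatness of $\Sigma$ to control the eigenvalues of $Q$. Since $\widetilde Q(z)=\sum_i(1-\lambda_i)\langle z,e_i\rangle^2$, all three estimates amount to showing $\widetilde Q$ is uniformly small on an $n$-dimensional subspace well approximated by $\Sigma$. By (8.3), applied at scale $\rho=r^{1+\gamma}$ (with $r_0$ so small that $10\rho\le 10r_0$), one obtains an $n$-plane $L$ through $0$ with
$$D[L\cap B(0,1),\Sigma_{r^{1+\gamma}}\cap B(0,1)]\le\delta.$$
Fix an orthonormal basis $v_1,\dots,v_n$ of $L$ and a constant $a=a(n,m)>0$ (e.g.\ $a=1/(8\sqrt n)$) so that each vector $av_j$ and $\tfrac{a}{\sqrt 2}(v_j+v_l)$ lies in $B(0,1/4)$. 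For any such $w\in L$, Reifenberg flatness produces $z(w)\in\Sigma_{r^{1+\gamma}}\cap B(0,\tfrac12)$ with $|z(w)-w|\le\delta$. Expanding $\widetilde Q(z(w))=\widetilde Q(w)+\widetilde Q(z(w)-w)+2\widetilde B(z(w)-w,w)$, where $\widetilde B$ is the symmetric bilinear form with $\widetilde B(x,x)=\widetilde Q(x)$, and using the crude bound $|\widetilde Q(x)|\le C|x|^2$ from (8.5),(8.7) (which passes to $|\widetilde B(x,y)|\le C|x||y|$ by the spectral theorem for symmetric forms), one has $|\widetilde Q(z(w))-\widetilde Q(w)|\le Ca\delta$. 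Choosing $r_0$ small enough that $\delta\ll \epsilon_1/a$, (8.14) evaluated at $z(w)$ yields
$$|\widetilde Q(av_j)|\le 2\epsilon_1,\qquad \bigl|\widetilde Q\bigl(\tfrac{a}{\sqrt 2}(v_j+v_l)\bigr)\bigr|\le 2\epsilon_1.$$

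By homogeneity $\widetilde Q(cx)=c^2\widetilde Q(x)$ and the polarisation identity $2\widetilde B(v_j,v_l)=\widetilde Q(v_j+v_l)-\widetilde Q(v_j)-\widetilde Q(v_l)$, it follows that $|\widetilde Q(v_j)|,\,|\widetilde B(v_j,v_l)|\le C\epsilon_1/a^2$. Hence the matrix of $\widetilde Q|_L$ in the orthonormal basis $\{v_j\}$ has all entries bounded by $C\epsilon_1/a^2$, so its operator norm on $L$ satisfies $\|\widetilde Q|_L\|_{\mathrm{op}}\le \epsilon_2$ once $a$ is adjusted to absorb the universal constants.

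Now pass to eigenvalues of the full form. Order the eigenvalues of $\widetilde Q$ on $\RR^m$ as $\mu_1\ge\cdots\ge\mu_m$, so $\mu_j=1-\lambda_j$. By the Courant--Fischer min--max principle with trial subspace $W=L$,
$$\mu_{k+1}=\mu_{m-n+1}\le \max_{w\in L,\,|w|=1}\widetilde Q(w)=\|\widetilde Q|_L\|_{\mathrm{op}}\le \epsilon_2,$$
so $\mu_{k+j}\le\epsilon_2$ for $1\le j\le n$. Combined with the trivial upper bound $\mu_j\le 1$ (from $\lambda_j\ge 0$) and the trace identity $\sum_j\mu_j=m-\mathrm{Tr}(Q)\ge k-CC_Kr^\alpha$ coming from (8.6), one obtains by averaging $\mu_j\ge 1-n\epsilon_2-CC_Kr^\alpha$ for $j\le k$, which rearranges to (8.40); and $\mu_{k+j}\ge -n\epsilon_2-CC_Kr^\alpha$, which, together with the upper bound and the choice of $r_0$ so that $CC_Kr^\alpha\le r^{\alpha/2}$, gives (8.41). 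Finally, (8.42) follows from the decomposition
$$\widetilde Q(z)-\sum_{l=1}^k\langle z,e_l\rangle^2=-\sum_{l=1}^k\lambda_l\langle z,e_l\rangle^2+\sum_{i=k+1}^m(1-\lambda_i)\langle z,e_i\rangle^2$$
by applying (8.40) to bound the first sum by $(\sum_l\lambda_l)|z|^2$ and (8.41) termwise to the second.

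The main obstacle is the first step: transferring the pointwise bound on $\Sigma_{r^{1+\gamma}}$ to a quantitative bound on $\widetilde Q|_L$. This is only possible because Theorem 1.16 lets us shrink the Reifenberg error $\delta$ independently of $\epsilon_1$ by shrinking $r_0$, and because the constraints (8.15) and (8.38) ensure $\epsilon_1\to 0$, so that the polarisation-induced factors $n/a^2$ can be absorbed into the final $\epsilon_2$.
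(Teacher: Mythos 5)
Your eigenvalue bookkeeping (min--max, trace identity, and the final decomposition for \eqref{eqn:8.42}) is sound, but the first and decisive step --- transferring the bound $|\widetilde Q|\le\epsilon_1$ from $\Sigma_{r^{1+\gamma}}\cap B(0,\frac12)$ to the Reifenberg plane $L$ --- has a genuine gap. Approximating $w\in L$ by $z(w)\in\Sigma_{r^{1+\gamma}}$ with $|z(w)-w|\le\delta$ costs an additive error $C\delta$, and you then require $\delta\ll\epsilon_1/a$. This cannot be arranged: $\delta$ in \eqref{eqn:8.3}--\eqref{eqn:8.21} is a fixed constant chosen \emph{before} $r_0$ (Reifenberg flatness with vanishing constant gives $\theta(0,\rho)\to0$ with no rate), whereas $\epsilon_1(r,\theta,\gamma)=r^{2\theta-\gamma}+Cr^\gamma+CC_Kr^{\alpha-2\gamma}$ decays polynomially in $r$ and must be allowed to range over all $r<r_0$. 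So for small $r$ one has $\epsilon_1\ll\delta$, and your argument only yields $\|\widetilde Q|_L\|\le C(\epsilon_1+\delta)$, hence $|\lambda_{k+i}-1|\le C(\epsilon_1+\delta)$ --- a qualitative smallness with no power of $r$, which destroys the quantitative content the lemma must feed into the later multiscale iteration. The paper itself uses the plane $L$ only for the crude, $\delta$-tolerant transversality estimate \eqref{eqn:8.23} in Lemma \ref{lem:8.22}.

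The paper's proof avoids this loss by a one-sided argument through Lemma \ref{lem:8.32}: it produces points $z_i=\sum_{l=1}^k t_i^l e_l+ae_{k+i}$ of $\Sigma_\rho\cap B(0,\frac12)$ whose projection onto the \emph{eigenplane} $P=\mathrm{span}(e_{k+1},\dots,e_m)$ is exactly $ae_{k+i}$; then
$\widetilde Q(z_i)=\sum_{l=1}^k(1-\lambda_l)(t_i^l)^2+(1-\lambda_{k+i})a^2$, and since $1-\lambda_l\ge(2n+2)^{-1}>0$ for $l\le k$ by \eqref{eqn:8.20}, the uncontrolled coefficients $t_i^l$ contribute with a favorable sign and can simply be discarded, giving $(1-\lambda_{k+i})a^2\le\epsilon_1$ with no $\delta$-error at all. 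To repair your proof you would need to replace the two-sided approximation of $L$ by such a sign/exactness argument (or otherwise show the transfer error is $O(\epsilon_1)$, which the hypotheses do not provide).
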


Note that (\ref{eqn:8.42}) automatically follows from (\ref{eqn:8.40}) and
(\ref{eqn:8.41}). In fact if we write
$z=\mathop{\sum}\limits^{m}_{i=1}z_ie_i$, then by (\ref{eqn:8.7}) and
(\ref{eqn:8.16}) we have
\begin{eqnarray}\label{eqn:8.43}
\widetilde Q(z)-\sum_{i=1}^k\langle z,e_i\rangle^2 & = & |z|^2-Q(z)-
\sum_{i=1}^k z^2_i \\
& = & \sum^n_{i=1}z^2_{k+i}-\sum^m_{i=1}\lambda_iz^2_i \nonumber \\
& = & -\sum^k_{l=1}\lambda_lz^2_{l} + \sum^n_{i=1}(1-\lambda_{i+k})z^2_i.
\nonumber
\end{eqnarray}

Note that the choice $\gamma=\theta$ with $\theta$ as in (\ref{eqn:8.14})
satisfies (\ref{eqn:8.15}). In this case
Lemma \ref{lem:8.39} becomes

\begin{cor}\label{cor:8.39B}
Suppose that (\ref{eqn:8.13}), and (\ref{eqn:8.38}) hold.
For $r_0$ small enough
\begin{eqnarray}
0  \le  \sum_{i=1}^k\lambda_i &\le& Cr^\theta, \label{eqn:8.40B} \\
|\lambda_{k+i}-1| & \le & Cr^{\theta}
\ \hbox{ for }\ 1\le i\le n,
\label{eqn:8.41B}
\end{eqnarray}
and
\begin{equation}\label{eqn:8.42B}
|\widetilde Q(z)-\sum_{l=1}^k\langle z,e_l\rangle^2|
\le Cr^\theta
|z|^2\hbox{ for
}z\in\RR^{m},
\end{equation}
where $C$ is a constant that depends on $K$, $n$ and $m$.
\end{cor}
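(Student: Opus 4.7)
The plan is to obtain Corollary \ref{cor:8.39B} as an immediate specialization of Lemma \ref{lem:8.39} with the choice $\gamma=\theta$, followed by collapsing the various error terms into a single power of $r^{\theta}$. No additional analytic estimates beyond those already in the lemma are required; the entire argument is bookkeeping driven by the constraint $\theta<\alpha/3$.

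First I would verify that $\gamma=\theta$ satisfies (\ref{eqn:8.15}). The inequality $0<\gamma<2\theta$ is automatic since $\theta>0$, while $2\gamma<\alpha$ reduces to $2\theta<\alpha$, which follows from (\ref{eqn:8.38}) because $\theta<\alpha/3<\alpha/2$. Hence Lemma \ref{lem:8.39} is applicable with this choice of $\gamma$.

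Next I would evaluate the error $\epsilon_1$ from (\ref{eqn:8.14}) at $\gamma=\theta$:
$$\epsilon_1(r,\theta,\theta) \;=\; r^{\theta} + Cr^{\theta} + CC_K r^{\alpha-2\theta}.$$
Because $\theta<\alpha/3$, we have $\alpha-2\theta>\alpha/3>\theta$, so $r^{\alpha-2\theta}\le r^{\theta}$ for all $r\in(0,1]$. Thus $\epsilon_1(r,\theta,\theta)\le C'r^{\theta}$ and consequently $\epsilon_2(r,\theta,\theta)=na^{-2}\epsilon_1(r,\theta,\theta)\le C''r^{\theta}$, with constants depending only on $K$, $n$ and $m$.

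Finally I would feed this back into the three conclusions of Lemma \ref{lem:8.39}. For (\ref{eqn:8.40}) the bound becomes $\epsilon_2+CC_K r^{\alpha}\le (C''+CC_K)r^{\theta}$ since $\theta<\alpha$, yielding (\ref{eqn:8.40B}). For (\ref{eqn:8.41}) the bound $\epsilon_2+r^{\alpha/2}$ is absorbed into $C''' r^{\theta}$ because $\theta<\alpha/3<\alpha/2$, giving (\ref{eqn:8.41B}). The bound (\ref{eqn:8.42B}) follows either from the corresponding inequality in Lemma \ref{lem:8.39} by the same absorption, or equivalently from (\ref{eqn:8.40B})--(\ref{eqn:8.41B}) via the algebraic decomposition (\ref{eqn:8.43}). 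There is no genuine obstacle here: the whole point of the hypothesis $\theta<\alpha/3$ is precisely to guarantee that every error exponent produced by Lemma \ref{lem:8.39} dominates $\theta$, so a single power $r^{\theta}$ suffices in the final statement.
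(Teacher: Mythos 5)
Your proposal is correct and is exactly the paper's argument: the authors obtain the corollary by setting $\gamma=\theta$ in Lemma \ref{lem:8.39} (having noted that this choice satisfies (\ref{eqn:8.15})) and absorbing the error terms $\epsilon_1(r,\theta,\theta)$, $CC_Kr^{\alpha}$ and $r^{\alpha/2}$ into a single $Cr^{\theta}$, which is legitimate precisely because (\ref{eqn:8.38}) forces $\alpha-2\theta>\theta$ and $\alpha/2>\theta$. Your bookkeeping of these exponents is accurate and nothing is missing.
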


To prove Lemma \ref{lem:8.39} we need some preliminary results. The first one is the following.

\begin{lem}\label{lem:8.22}
Let $L$ denote an $n$-plane satisfying
(\ref{eqn:8.21}). For $l=1,\cdots, k$ let $v_l$ denote the orthogonal
projection of $e_l$ onto $L$.
If $\delta$ and $r_0$ are chosen small enough,
\begin{equation}\label{eqn:8.23}
|\sum_{l=1}^k x_l v_l-\sum_{l=1}^k x_l e_l|\ge C^{-1},
\end{equation}
whenever $\sum_{l=1}^k |x_l|^2=1$
\end{lem}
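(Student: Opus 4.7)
My plan is to identify $|\sum x_l v_l - \sum x_l e_l|$ with the distance from a unit vector $w$ in $E := \mathrm{span}(e_1,\ldots,e_k)$ to $L$, and then to exploit that the quadratic form $\widetilde Q$ is uniformly bounded below on the unit sphere of $E$ while being nearly zero on $L$. Set $w := \sum_{l=1}^k x_l e_l$; since $v_l$ is the orthogonal projection of $e_l$ onto $L$, the vector $\sum x_l v_l$ is $P_L w$, so the quantity to bound from below is $|P_L w - w| = \dist(w, L)$, while $|w|=1$. From (\ref{eqn:8.16}) and (\ref{eqn:8.20}) I immediately obtain the pointwise lower bound
\[
\widetilde Q(w) \;=\; \sum_{l=1}^k (1-\lambda_l)x_l^2 \;\ge\; 1 - \lambda_k \;\ge\; \frac{1}{2n+2}.
\]

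Next I would prove that $\widetilde Q$ is uniformly small on $L$. Pick $\rho = r^{1+\gamma}$ with $\gamma$ satisfying (\ref{eqn:8.15}), and let $\hat u \in L$ be a unit vector. By (\ref{eqn:8.21}) there exists $x \in \Sigma \cap B(0,\rho)$ with $|x-\rho\hat u| \le \rho\delta$. Applying (\ref{eqn:8.8}) at $x$, bounding $|\langle b, x\rangle|\le |b|\,|x|\le r^{1+2\theta}\rho$ via (\ref{eqn:8.13}), and transporting from $x$ to $\rho\hat u$ through the bilinear identity $\widetilde Q(x)-\widetilde Q(\rho\hat u)=B(x+\rho\hat u, x-\rho\hat u)$ (the associated bilinear form $B$ has operator norm at most $2n+3$ by (\ref{eqn:8.5})), after dividing by $\rho^2$ I arrive at
\[
|\widetilde Q(\hat u)| \;\le\; C\delta + 2 r^{2\theta-\gamma} + C r^\gamma + C C_K r^{\alpha-2\gamma} \;=:\; \eta.
\]
By (\ref{eqn:8.15}) the three exponents $2\theta-\gamma,\gamma,\alpha-2\gamma$ are strictly positive, so $\eta$ can be made as small as desired by shrinking $\delta$ and $r_0$. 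Homogeneity of $\widetilde Q$ upgrades this to $\widetilde Q(u)\le \eta|u|^2$ for every $u\in L$.

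To conclude, I decompose $w = P_L w + w^\perp$ with $w^\perp \perp L$ and use bilinearity together with $|P_L w|\le |w|=1$ and $\|B\|\le 2n+3$ to estimate
\[
\widetilde Q(w) \;=\; \widetilde Q(P_L w) + 2 B(P_L w, w^\perp) + \widetilde Q(w^\perp) \;\le\; \eta + 2(2n+3)|w^\perp| + (2n+3)|w^\perp|^2.
\]
Comparing this upper bound with $\widetilde Q(w)\ge 1/(2n+2)$ and choosing $\delta$ and $r_0$ small enough that $\eta<1/(4n+4)$ yields a quadratic inequality in $|w^\perp|$ that forces $|w^\perp|\ge c(n,m)>0$, which is exactly (\ref{eqn:8.23}). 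The main obstacle is the uniform upper bound on $\widetilde Q|_L$: it requires simultaneously the Reifenberg flatness (\ref{eqn:8.21}), the quadratic asymptotic (\ref{eqn:8.8}) that ties $\widetilde Q$ to the affine functional $\langle b,\cdot\rangle$ on $\Sigma$, and the smallness hypothesis (\ref{eqn:8.13}) needed to absorb the $b$-term; everything else is elementary quadratic-form bookkeeping.
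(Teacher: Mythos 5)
Your proof is correct and follows essentially the same route as the paper's: a uniform lower bound $\widetilde Q(w)\ge\frac{1}{2n+2}$ on the unit sphere of $\mathrm{span}(e_1,\dots,e_k)$ from (\ref{eqn:8.20}), a smallness bound for $\widetilde Q$ on $L$ obtained by pushing points of $L\cap B(0,\rho)$ onto $\Sigma$ via (\ref{eqn:8.21}) and invoking (\ref{eqn:8.8}) with (\ref{eqn:8.13}) (this is exactly the paper's (\ref{eqn:8.14})--(\ref{eqn:8.25})), and a Lipschitz/bilinear-form comparison forcing $w$ to be far from its projection. Your explicit use of homogeneity and of the bilinear form merely makes precise steps the paper leaves implicit.
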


\noindent In Lemma \ref{lem:8.22}, $\delta$ and $r_0$ depend on $n$,
$\alpha$, $\theta$ and $\gamma$. At most $2k$ values
of $\theta$ and $\gamma$, are used depending only on
$\alpha$, and a choice of $\theta$. Thus one can always choose
$\delta>0$ and $r_0>0$ to work simultaneously for all our choices.
The constant $C>1$ depends only on $n$ and $m$.

\begin{proof}
To prove Lemma \ref{lem:8.22}, we first estimate $\widetilde Q(z)$ for $z\in
L\cap B(0,\frac{1}{3})$. Since $\rho z\in L\cap B(0,\rho)$, where $\rho=r^{1+\gamma}$, (\ref{eqn:8.21}) guarantees that there is a point $x\in\Sigma\cap
B(0,\rho)$ such that $|x-\rho z|\le 2\rho\delta$. If $\delta$ is small
enough, $|\rho^{-1}x|<\frac{1}{2}$, and so (\ref{eqn:8.14}) ensures that
$|\widetilde Q(\rho^{-1}x)|\le \epsilon_1(r,\theta,\gamma)$.
Also, $|\rho^{-1}x-z|=\rho^{-1}|x-\rho z|\le 2\delta$, and hence (\ref{eqn:8.5}) and (\ref{eqn:8.7})
guarantee that
\begin{equation}\label{eqn:8.24}
|\widetilde Q(\rho^{-1}x)-\widetilde Q(z)|\le C\delta.
\end{equation}
Altogether,
\begin{equation}\label{eqn:8.25}
|\widetilde Q(z)|\le \epsilon_1(r,\theta,\gamma)+
C\delta\hbox{ for }z\in L\cap
B(0,\frac{1}{3}).
\end{equation}
We are now ready to prove (\ref{eqn:8.23}).
Let $u=\sum_{l=1}^k x_l e_l$ with $|u|=1$. Then $w=\sum_{l=1}^k x_l v_l$
satisfies $|w|\le 1$, thus (\ref{eqn:8.25})
guarantees that
\begin{equation}\label{eqn:8.26}
|\widetilde Q(u)| \le  |\widetilde Q(w)|+|\widetilde Q(u)-\widetilde
Q(v)|
 \le  \epsilon_1(r,\theta, \gamma)+C\delta +
|\widetilde Q(u)-\widetilde Q(w)|.
\end{equation}
On the other hand since $u$ belongs to the span of the first $k$ eigenvectors
of $Q$ we have that
\begin{equation}\label{eqn:8.27}
\widetilde Q(u)=1-Q(u)\ge 1-\lambda_k\ge \frac{1}{2n+2}
\end{equation}
by (\ref{eqn:8.7}), (\ref{eqn:8.16}), and (\ref{eqn:8.20}). If $\delta$ and
$r_0$ are small enough, (\ref{eqn:8.26}) and (\ref{eqn:8.27}) imply that
\begin{equation}\label{eqn:8.28}
|\widetilde Q(u)-\widetilde Q(w)|\ge \frac{1}{4n+4}.
\end{equation}
Thus $w$ cannot be too close to $u$ (because of (\ref{eqn:8.5})), and
(\ref{eqn:8.23}) holds. \qed
\end{proof}

\noindent Now we want to use the fact that $\Sigma\cap B(x_0, R_0)$
is Reifenberg flat with
vanishing constant to get important topological information on
$\Sigma_\rho\cap B(0, \frac{1}{2})$, $\rho=r^{1+\gamma}$. Denote by $P$
the $n$-plane through 0 which is orthogonal to $e_1,\cdots, e_k$. Thus
\begin{equation}\label{eqn:8.29}
P={\rm span}^\perp(e_1, \cdots, e_k)=\mathrm{span}(e_{k+1},\ldots, e_m).
\end{equation}

\noindent
Call $\pi$ the orthogonal projection onto $P$.
Also denote by $\pi^\ast:
\RR^m\to L$ the projection onto $L$ parallel to the direction
$\{e_1,\cdots, e_k\}$, i.e.
$$
\pi^{\ast}(x)=\pi^{\ast}(\sum_{l=1}^m x_l e_l)=
\sum_{l=1}^n x_{k+l} e_{k+l}+\sum_{l=1}^ky_l e_l
$$
where the orthogonal projection of $\sum_{l=1}^ky_l e_l$ into $L^\perp$
coincides with that of $\sum_{l=1}^n x_{k+l} e_{k+l}$.
Here $L$ is as in (\ref{eqn:8.21}), and $L^\perp$ denotes the
$(m-n)$ space orthogonal to $L$. Denote by $\pi'$ the orthogonal projection
of $\RR^m$ onto $L^\perp$.
Lemma \ref{lem:8.22} ensures that
\begin{eqnarray}\label{eqn:8.30A}
C^{-1}|\sum_{l=1}^ky_l e_l| &\le & |\sum_{l=1}^ky_l e_l -\sum_{l=1}^ky_l v_l|
=|\pi'(\sum_{l=1}^ky_l e_l)|\\
&\le & |\pi'(\sum_{l=1}^n x_{k+l} e_{k+l})|\le |\sum_{l=1}^n x_{k+l} e_{k+l}|
\le |x|.\nonumber
\end{eqnarray}
Thus
\begin{equation}\label{eqn:8.30}
|\pi^\ast(x)|\le C_0|x|\hbox{ for }x\in\RR^{m}.
\end{equation}
Here $C_0= 2C$ where $C$ is as in (\ref{eqn:8.23}), a constant that
depends only on $n$ and $m$.

Set $a=(4C_0)^{-1}$,
and recall that $\rho=r^{1+\gamma}$, where $\gamma$ satisfies
(\ref{eqn:8.15}). The same argument as in \cite{DKT} guarantees that:

\begin{figure}\label{fig:8.1}
\hskip1.5in\begin{psfrags}
\includegraphics[width=4in]{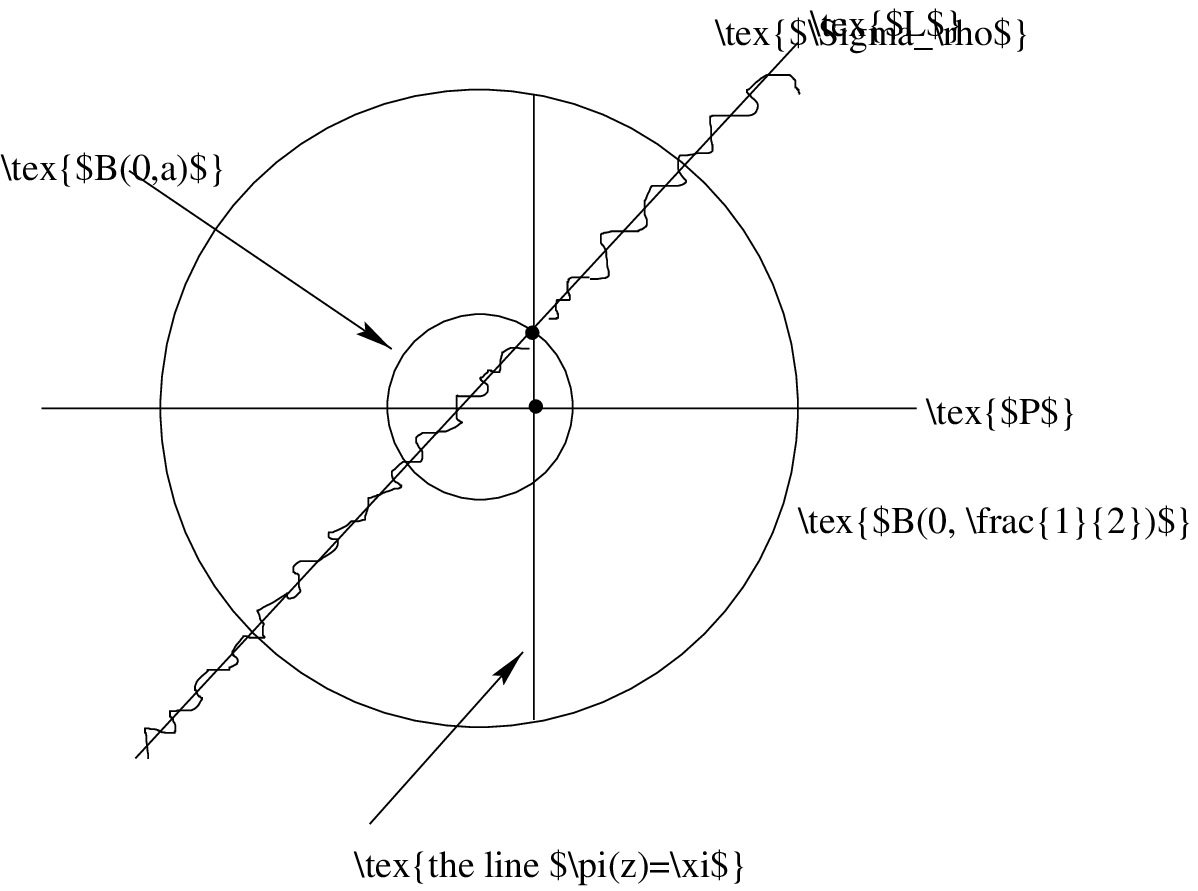}
\end{psfrags}
\caption{}
\end{figure}

\begin{lem}\label{lem:8.32}
For every $\xi\in P\cap\overline B(0,a)$, there is a point $z\in\Sigma_\rho
\cap B(0, \frac{1}{2})$ such that $\pi(z)=\xi$. {\rm [See Figure 8.1]}
\end{lem}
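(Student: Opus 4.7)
The plan is to prove this by a topological/degree argument that combines the Reifenberg parametrization of $\Sigma_\rho$ with the transversality information provided by Lemma \ref{lem:8.22}. The underlying idea is that at scale $\rho=r^{1+\gamma}$, $\Sigma_\rho\cap B(0,1)$ is a topological $n$-disk lying very close to $L$, and since $L$ is transverse to $\ker\pi=\mathrm{span}(e_1,\ldots,e_k)$ by \eqref{eqn:8.30}, projecting $\Sigma_\rho$ onto $P$ should behave like a small perturbation of the linear isomorphism $\pi|_L:L\to P$.

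First I would invoke the topological Reifenberg theorem. Since $\Sigma\cap B(x_0,R_0)$ is Reifenberg flat with vanishing constant by \eqref{eqn:8.2}, and since \eqref{eqn:8.21} holds with $\delta$ as small as we please once $r_0$ is small, we obtain a bi-H\"older homeomorphism $\phi:L\cap \overline B(0,1)\to\phi(L\cap\overline B(0,1))\subset\Sigma_\rho$ whose image contains $\Sigma_\rho\cap B(0,\tfrac{3}{4})$ and which satisfies $|\phi(\eta)-\eta|\le\delta'$ for every $\eta$, where $\delta'=\delta'(\delta)\to 0$ as $\delta\to 0$. In particular $\phi$ sends $L\cap\overline B(0,\tfrac12)$ into $\Sigma_\rho\cap B(0,\tfrac12+\delta')$.

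Next I would control the composition $\pi\circ\phi:L\cap\overline B(0,\tfrac12)\to P$. Lemma \ref{lem:8.22} tells us that $L$ and $\mathrm{span}(e_1,\ldots,e_k)$ meet transversely, so $\pi|_L$ is a linear isomorphism onto $P$, and \eqref{eqn:8.30} reads (after inversion) $|\pi(\eta)|\ge C_0^{-1}|\eta|$ for $\eta\in L$. Consequently $\pi|_L$ sends $L\cap\overline B(0,\tfrac12)$ onto a subset of $P$ containing $\overline B(0,(2C_0)^{-1})\cap P\supset\overline B(0,2a)\cap P$. Since
\[
|\pi(\phi(\eta))-\pi(\eta)|\le|\phi(\eta)-\eta|\le\delta'
\]
for every $\eta\in L\cap\overline B(0,\tfrac12)$, the map $\pi\circ\phi$ is a uniformly $\delta'$-small perturbation of the linear isomorphism $\pi|_L$ when $\delta'\ll a$. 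A standard Brouwer degree computation (or, equivalently, a direct fixed-point argument applied to the self-map $\xi\mapsto \xi-[\pi(\phi((\pi|_L)^{-1}\xi))-\xi]$ of $P\cap\overline B(0,a)$) then shows that the image of $\pi\circ\phi$ contains $P\cap\overline B(0,a)$. For the given $\xi\in P\cap\overline B(0,a)$ this yields $\eta\in L\cap\overline B(0,\tfrac12)$ with $\pi(\phi(\eta))=\xi$; the point $z:=\phi(\eta)\in\Sigma_\rho$ then satisfies $\pi(z)=\xi$ and, since $|z|\le|\eta|+\delta'<\tfrac12$ for $r_0$ small enough, lies in $\Sigma_\rho\cap B(0,\tfrac12)$, as required.

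The main obstacle is the first step: one must verify that the Reifenberg flatness hypothesis at scale $\rho$ yields a chart $\phi$ with the quantitative $C^0$-closeness $|\phi(\eta)-\eta|\le\delta'$ at the rescaled scale $1$, with $\delta'$ controlled by the same $\delta$ appearing in \eqref{eqn:8.21}. This is where the vanishing-constant hypothesis is essential, and it is exactly the content that justifies why ``$\Sigma_\rho\cap B(0,\tfrac12)$ is a topological disk close to $L$'' on the scale we are working. Once this is in hand the degree argument is soft and the constant $a=(4C_0)^{-1}$ is adequate because it leaves a factor of $2$ of room in the estimate $|\pi(\eta)|\ge C_0^{-1}|\eta|$.
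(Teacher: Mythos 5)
Your proof is correct and follows essentially the route the paper itself takes: the paper simply invokes ``the same argument as in \cite{DKT}'', which is precisely this combination of the Reifenberg topological-disk parametrization (available here because of (\ref{eqn:8.2}) and (\ref{eqn:8.3})) with the transversality estimate (\ref{eqn:8.30}) and a Brouwer degree argument for the perturbed projection $\pi\circ\phi$. One trivial slip: in your parenthetical fixed-point alternative the self-map should be $\xi\mapsto \xi_0-\bigl[\pi(\phi((\pi|_L)^{-1}\xi))-\xi\bigr]$ for the given target $\xi_0$, not $\xi-[\cdots]$; this does not affect the argument.
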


\noindent We have gathered all the information needed to prove Lemma \ref{lem:8.39}.
\medskip

\noindent{\bf Proof of Lemma \ref{lem:8.39}}: To prove (\ref{eqn:8.40}) and
(\ref{eqn:8.41}), we apply Lemma \ref{lem:8.32}
with $\xi=ae_{k+i}$, $1\le i\le n$. We choose $\gamma$ so that
(\ref{eqn:8.15}) holds. We get that for some $(t_i^1,\cdots, t_i^k)\in\RR^k$,
\begin{equation}\label{eqn:8.44}
z_i=\sum_{l=1}^k t_i^le_l+ae_{k+i}\in\Sigma_\rho\cap B(0, \frac{1}{2}).
\end{equation}
If we take $\gamma=\theta$, (\ref{eqn:8.14}) and (\ref{eqn:8.38}) guarantee that
\begin{equation}\label{eqn:8.45}
|\widetilde Q(z_i)|\le \epsilon_1(r,\theta,\gamma)
\end{equation}
Combining (\ref{eqn:8.7}), (\ref{eqn:8.16}) and (\ref{eqn:8.44}) we obtain
that
\begin{equation}\label{eqn:8.46}
\widetilde Q(z_i)  =  |z_i|^2-Q(z_i)
 =  \sum_{l=1}^k(1-\lambda_l)(t_i^l)^2 + (1-\lambda_{k+i})a^2.
\end{equation}
Since $1-\lambda_l\ge (2n+2)^{-1}$ for $1\le l\le k$
(by (\ref{eqn:8.20})), we get that
\begin{equation}\label{eqn:8.47}
(1-\lambda_{k+i})a^2\le\widetilde Q(z_i)\le \epsilon_1(r,\theta,\gamma)
\end{equation}
(by (\ref{eqn:8.45})). Thus
\begin{equation}\label{eqn:8.48}
\lambda_{k+i}\ge 1-a^{-2}\epsilon_1(r,\theta,\gamma),
\end{equation}
for $1\le i\le n$, and hence
\begin{equation}\label{eqn:8.49}
\sum^n_{i=1}\lambda_{k+i}\ge n-\epsilon_2(r,\theta,\gamma).
\end{equation}
By (\ref{eqn:8.18}) and (\ref{eqn:8.49}) we have that
\begin{equation}\label{eqn:8.50}
\sum_{l=1}^k\lambda_l=Tr(Q)-\sum^n_{i=1}\lambda_i\le CC_Kr^\alpha+
a^{-2}\epsilon_1(r,\theta,\gamma)
\end{equation}
This proves (\ref{eqn:8.40}), because we already know that
$\sum_{l=1}^k\lambda_l\ge 0$.
To prove (\ref{eqn:8.41}), we proceed by contradiction and suppose that we
can find $1\le i_0\le n$ such that
\begin{equation}\label{eqn:8.51}
\lambda_{k+i_0}>1+\epsilon_2(r,\theta,\gamma)+r^{\alpha/2}.
\end{equation}
Then (\ref{eqn:8.48}) and
(\ref{eqn:8.51}) yield
\begin{equation}\label{eqn:8.52}
\sum^m_{i=1}\lambda_i  \ge  \sum^n_{i=1}\lambda_{k+i}
 \ge  \lambda_{k+i_0} + (n-1)(1-\frac{\epsilon_2(r,\theta,\gamma)}{n})
 >  n + r^{\alpha/2}
\end{equation}
This contradicts (\ref{eqn:8.18}), thus (\ref{eqn:8.51}) is impossible and
(\ref{eqn:8.41}) holds. We already observed earlier that (\ref{eqn:8.42}) is a consequence of
(\ref{eqn:8.40}) and (\ref{eqn:8.41}), and so
Lemma \ref{lem:8.39}
follows. \qed
\medskip

Next we use Corollary \ref{cor:8.39B} to rewrite (\ref{eqn:8.12}), still under
the assumption that (\ref{eqn:8.13}) holds for some $\theta\in (0,
\frac{\alpha}{3})$. Combining (\ref{eqn:8.12}) and (\ref{eqn:8.42}) we get
that for $z\in \Sigma_\rho\cap B(0, \frac{1}{2})$
\begin{eqnarray}\label{eqn:8.53}
|\langle 2br^{-1-\gamma}, z\rangle-
\sum_{l=1}^k\langle z,e_l\rangle^2|
 & \le &
Cr^\gamma + CC_Kr^{\alpha-2\gamma}+ |\widetilde Q(z)-
\sum_{l=1}^k\langle z,e_l\rangle^2| \\
& \le & Cr^\gamma+CC_Kr^{\alpha-2\gamma} + Cr^\theta \nonumber\\
& =: & \epsilon_3(r,\theta,\gamma).\nonumber
\end{eqnarray}
Note that here $\rho=r^{1+\gamma}$ for any $\gamma>0$ (as in
(\ref{eqn:8.12})). Of course (\ref{eqn:8.53}) only provides useful
information when $0<\gamma<\frac{\alpha}{2}$.

Next we want to get a better estimate on the ``tangential part'' of b.
This allows us to estimate $\beta(0,s)$ as defined in (\ref{eqn:8.1})
for an appropriately chosen $s$.

\begin{prop}\label{prop:8.100}
If $b=\mathop{\sum}\limits^{m}_{i=1} b_ie_i$, then
\begin{equation}\label{eqn:8.54}
|b_{k+i}|\le C r^{1+\eta}\epsilon_3(r,\theta,\eta) + Cr^{1+4\theta-\eta}
\hbox{ for }1\le i\le n.
\end{equation}
\end{prop}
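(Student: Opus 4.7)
The plan is to play off two symmetric test points against each other, so as to decouple the transverse-coordinate estimate from $b_{k+i}$ itself. Set $\rho=r^{1+\eta}$. For a fixed $i\in\{1,\dots,n\}$, apply Lemma \ref{lem:8.32} twice, with $\xi=ae_{k+i}$ and with $\xi=-ae_{k+i}$, to produce
\[
z^{\pm}=\pm a\,e_{k+i}+\sum_{l=1}^{k}t^{\pm}_{l}\,e_{l}\in\Sigma_{\rho}\cap B\bigl(0,\tfrac{1}{2}\bigr),\qquad s^{\pm}:=\Bigl(\sum_{l=1}^{k}(t^{\pm}_{l})^{2}\Bigr)^{1/2}.
\]
Inserting $z^{\pm}$ into (\ref{eqn:8.53}) (valid because (\ref{eqn:8.13}) and Corollary \ref{cor:8.39B} allow its application) gives, writing $b'=(b_{1},\dots,b_{k})\in\mathbb{R}^{k}$,
\[
\Bigl|\,2r^{-1-\eta}\bigl(\pm a\,b_{k+i}+\langle t^{\pm},b'\rangle\bigr)-(s^{\pm})^{2}\Bigr|\le\epsilon_{3}(r,\theta,\eta).
\]

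Adding the two estimates annihilates the $\pm a\,b_{k+i}$ contribution, and then using $|b'|\le|b|\le r^{1+2\theta}$ from (\ref{eqn:8.13}) produces
\[
(s^{+})^{2}+(s^{-})^{2}\le 2r^{2\theta-\eta}(s^{+}+s^{-})+2\epsilon_{3}.
\]
With $S:=s^{+}+s^{-}$, the elementary bound $(s^{+})^{2}+(s^{-})^{2}\ge S^{2}/2$ converts this into a quadratic inequality in $S$, yielding the \emph{decoupled} estimate $S\le C\bigl(r^{2\theta-\eta}+\sqrt{\epsilon_{3}}\bigr)$. Subtracting the two estimates instead doubles the $b_{k+i}$ contribution; routine bookkeeping (triangle inequality on $|t^{+}-t^{-}|\le S$, and $|(s^{+})^{2}-(s^{-})^{2}|\le S^{2}$) then gives
\[
|b_{k+i}|\le Cr^{1+\eta}\epsilon_{3}+Cr^{1+2\theta}S+Cr^{1+\eta}S^{2}.
\]

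Substituting the bound on $S$ yields
\[
|b_{k+i}|\le Cr^{1+\eta}\epsilon_{3}+Cr^{1+4\theta-\eta}+Cr^{1+2\theta}\sqrt{\epsilon_{3}},
\]
and the last term is disposed of by the AM-GM identity $r^{1+2\theta}\sqrt{\epsilon_{3}}=\sqrt{r^{1+4\theta-\eta}\cdot r^{1+\eta}\epsilon_{3}}\le\tfrac12(r^{1+4\theta-\eta}+r^{1+\eta}\epsilon_{3})$, producing (\ref{eqn:8.54}). The main conceptual obstacle is this decoupling step: a single test point would produce a bound on $s$ that self-referentially involves $|b_{k+i}|$ with a universal constant exceeding $1$, so that absorption back into the left-hand side fails. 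The symmetric pair $\{z^{+},z^{-}\}$ exploits the odd parity in the $e_{k+i}$-direction to cancel the $b_{k+i}$ contribution at the summation stage, which is the key device that drives the proposition.
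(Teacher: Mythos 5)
Your proof is correct, and its skeleton is the same as the paper's: both apply Lemma \ref{lem:8.32} to the symmetric pair $\xi_\pm=\pm a e_{k+i}$ with $\rho=r^{1+\eta}$ and feed the resulting points into (\ref{eqn:8.53}). Where you diverge is in the algebra. The paper keeps only the lower-bound half of (\ref{eqn:8.53}) and observes that each tangential term $f_l(t)=2b_lr^{-1-\eta}t-t^2$ is bounded above by $(b_lr^{-1-\eta})^2$ \emph{uniformly in $t$} (completing the square), so that $\pm 2ab_{k+i}r^{-1-\eta}\ge -\epsilon_3-|b|^2r^{-2-2\eta}$ for both signs; the two one-sided bounds immediately give $|b_{k+i}|\le (2a)^{-1}r^{1+\eta}\epsilon_3+(2a)^{-1}|b|^2r^{-1-\eta}$, and (\ref{eqn:8.13}) finishes. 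This bypasses any estimate on the transverse coordinates $s^\pm$. Your add/subtract scheme instead requires the intermediate quadratic-inequality bound on $S=s^++s^-$ and a final AM--GM step to absorb the cross term $r^{1+2\theta}\sqrt{\epsilon_3}$; all of these steps check out and land on exactly (\ref{eqn:8.54}), but the completion-of-the-square device is the shorter route, and it makes transparent why the two test points suffice without any control on where Lemma \ref{lem:8.32} places them in the $e_1,\dots,e_k$ directions.
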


\begin{rem} The goal is to show that given appropriate choices
for $\theta$ and $\eta$ satisfying
(\ref{eqn:8.38}) and (\ref{eqn:8.15}) with $\eta$ in place of $\gamma$,
(\ref{eqn:8.54}) provides an improvement over (\ref{eqn:8.13}).
In the codimension 1 case it was possible to choose $\gamma=\eta=3\theta/2$.
The reader will note that this choice does improve estimate
(\ref{eqn:8.13}). Unfortunately in the higher codimension set up it is
premature to choose $\eta$ at this stage.

\end{rem}

\begin{proof}
Choose $\theta$ and $\gamma=\eta$ such that (\ref{eqn:8.38})
and (\ref{eqn:8.15})
hold. We can then apply Lemma
\ref{lem:8.32}. Fix $i\in\{1, 2, \ldots, n\}$ and apply Lemma \ref{lem:8.32}
to the two points $\xi_\pm=\pm ae_{k+i}$. We get $k$ vectors
$(t_1^\pm, \cdots, t_k^\pm)$ such that
\begin{equation}\label{eqn:8.55}
z_\pm = \sum_{l=1}^k t_l^\pm e_l \pm ae_{k+i} \in
\Sigma_{r^{1+\eta}}\cap B(0, \frac{1}{2}).
\end{equation}
Then (\ref{eqn:8.53})
implies that
\begin{equation}\label{eqn:8.56}
\sum_{l=1}^k 2b_lr^{-1-\eta}t_l^\pm \pm 2b_{k+i}r^{-1-\eta} a-
\sum_{l=1}^k(t_l^\pm)^2 \ge
-\epsilon_3(r,\theta,\eta).
\end{equation}
Set $f_l(t)=2b_lr^{-1-\eta}t-t^2$ for $1\le l\le k$. Then
\begin{equation}\label{eqn:8.57}
f_l(t) = (b_lr^{-1-\eta})^2 - (b_lr^{-1-\eta}-t)^2 \le
(b_lr^{-1-\eta})^2
\end{equation}
for all $t\in\RR$. Hence by (\ref{eqn:8.56}) and (\ref{eqn:8.57}) we have
that
\begin{equation}\label{eqn:8.58}
\pm 2b_{k+i}r^{-1-\eta} a \ge -\epsilon_3(r,\theta,\eta)-
\sum_{l=1}^kf_l(t_l^\pm) \ge
-\epsilon_3(r,\theta,\eta) - \sum_{l=1}^k(b_lr^{-1-\eta})^2.
\end{equation}
Here we have two inequalities, one for each sign $\pm$. Thus by
(\ref{eqn:8.13})
\begin{equation}\label{eqn:8.59}
|b_{k+i}| \le (2a)^{-1}r^{1+\eta}\epsilon_3(r,\theta,\eta) +
(2a)^{-1}|b|^2r^{-1-\eta} \le Cr^{1+\eta}\epsilon_3(r,\theta,\eta) +
Cr^{1+4\theta-\eta}.
\end{equation} \qed
\end{proof}
\medskip

Combining (\ref{eqn:8.54}) and (\ref{eqn:8.53}), we get that for
$z\in\Sigma_\rho\cap B(0, \frac{1}{2})$, where $\rho=r^{1+\gamma}$,
\begin{eqnarray}\label{eqn:8.60}
& & \\
|\langle 2\sum_{l=1}^k b_lr^{-1-\gamma}, z\rangle-
\sum_{l=1}^k\langle z,e_l\rangle^2|
& \leq & \epsilon_3(r,\theta,\gamma) +|\sum_{i=1}^n 2b_{k+i}r^{-1-\gamma}\langle
z, e_{k+i}\rangle|\nonumber\\
&\le &  \epsilon_3(r,\theta,\gamma) +Cr^{-1-\gamma}
r^{1+\eta}\epsilon_3(r,\theta,\eta) +
Cr^{4\theta-\eta-\gamma}\nonumber\\
&\le& C(r^{\gamma}+r^{\alpha-2\gamma}+r^\theta +r^{2\eta-\gamma}+
r^{\alpha-\eta-\gamma} + r^{\theta+\eta-\gamma} + r^{4\theta-\eta-\gamma}).
\nonumber
\end{eqnarray}

This holds for $\theta$ as in (\ref{eqn:8.38}), $\eta$ satisfying
\begin{equation}\label{eqn:8.15E}
0<\eta<2\theta\;\;\hbox{ and }\;\;2\eta<\alpha,
\end{equation}
and all $\gamma>0$ as in (\ref{eqn:8.53})
It only provides an interesting estimate for some values of $\gamma$.
Choose
\begin{equation}\label{eqn:800}
0<4\gamma<\alpha,
\end{equation}
and define
\begin{equation}\label{eqn8:60E}
\epsilon_4(r,\theta,\gamma,\eta):=C
(r^{\gamma}+r^\theta +r^{2\eta-\gamma}
+ r^{\theta+\eta-\gamma} + r^{4\theta-\eta-\gamma}).
\end{equation}
Then (\ref{eqn:8.60}) becomes
\begin{equation}\label{eqn:8.60EE}
|\langle 2\sum_{l=1}^k b_lr^{-1-\gamma}, z\rangle-
\sum_{l=1}^k\langle z,e_l\rangle^2|\le \epsilon_4(r,\theta,\gamma,\eta).
\end{equation}

\begin{prop}\label{prop:8.101}
With the notation above we have that
\begin{equation}\label{eqn:8.61}
|\sum_{l=1}^k\langle z,  e_l \rangle e_l|\le
3\epsilon_4(r,\theta,\gamma,\eta)^{1/2}
\hbox{ for }
z\in\Sigma_\rho\cap B(0, \frac{1}{4}).
\end{equation}
Here $\rho=r^{1+\gamma}$. The exponents $\theta$, $\gamma$ and $\eta$ satisfy
(\ref{eqn:8.38}), (\ref{eqn:8.15E}) and (\ref{eqn:800}).
\end{prop}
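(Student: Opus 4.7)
The plan is to turn \eqref{eqn:8.60EE} into a quadratic inequality in $|P(z)|$ by completing the square, and then control the resulting linear term using the a priori bound \eqref{eqn:8.13} on $|b|$. Write $P(z):=\sum_{l=1}^k\langle z,e_l\rangle e_l$ for the orthogonal projection of $z$ onto $\mathrm{span}(e_1,\dots,e_k)$, and set $\sigma:=\sum_{l=1}^k b_l r^{-1-\gamma}e_l$, which lies in the same subspace. Since $\langle\sigma,z\rangle=\langle\sigma,P(z)\rangle$ and $\sum_{l}\langle z,e_l\rangle^2=|P(z)|^2$, the estimate \eqref{eqn:8.60EE} rewrites as $|2\langle\sigma,P(z)\rangle-|P(z)|^2|\le\epsilon_4$ for $z\in\Sigma_\rho\cap B(0,\tfrac12)$.

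The identity $|P(z)|^2-2\langle\sigma,P(z)\rangle=|P(z)-\sigma|^2-|\sigma|^2$ converts the previous inequality into $|P(z)-\sigma|^2\le|\sigma|^2+\epsilon_4$. Taking square roots and applying the triangle inequality then gives
$$|P(z)|\le|\sigma|+\sqrt{|\sigma|^2+\epsilon_4}\le 2|\sigma|+\sqrt{\epsilon_4}.$$
So the entire proposition reduces to showing $|\sigma|\le\sqrt{\epsilon_4}$ up to constants that can be absorbed by the factor of $3$ in \eqref{eqn:8.61}.

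To bound $|\sigma|$, I feed in \eqref{eqn:8.13}: since $|b|\le r^{1+2\theta}$,
$$|\sigma|^2=r^{-2-2\gamma}\sum_{l=1}^k b_l^2\le r^{-2-2\gamma}|b|^2\le r^{4\theta-2\gamma}.$$
The main obstacle is comparing $r^{4\theta-2\gamma}$ with the individual terms in $\epsilon_4$ under only the hypotheses \eqref{eqn:8.38}, \eqref{eqn:8.15E}, \eqref{eqn:800}. The natural candidate in $\epsilon_4$ is $r^{4\theta-\eta-\gamma}$, which dominates $r^{4\theta-2\gamma}$ exactly when $\gamma\le\eta$; in the opposite regime I instead match against $r^{2\eta-\gamma}$, which dominates $r^{4\theta-2\gamma}$ provided $\gamma\le 4\theta-2\eta$, a range compatible with $\eta<2\theta$ after possibly shrinking $r_0$. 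Either way, $|\sigma|^2\le C_1\epsilon_4$ for some $C_1=C_1(n,m)$, and absorbing $C_1$ into the factor of $3$ via smallness of $r_0$ yields $|P(z)|\le 3\epsilon_4^{1/2}$ on $\Sigma_\rho\cap B(0,\tfrac14)\subset\Sigma_\rho\cap B(0,\tfrac12)$, which is the range in which \eqref{eqn:8.60EE} was derived.
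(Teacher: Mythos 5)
Your algebra up to the completed square is sound and coincides with the paper's own reduction: with $z^\perp=P(z)$ and $d=2\sigma$, \rf{eqn:8.60EE} becomes $\bigl||z^\perp|^2-\langle d,z^\perp\rangle\bigr|\le\epsilon_4$, which is exactly \rf{eqn:8.62}. The gap is the final step, the claim that $|\sigma|^2\le C_1\epsilon_4$. The stated hypotheses \rf{eqn:8.38}, \rf{eqn:8.15E}, \rf{eqn:800} impose no upper bound on $\gamma$ in terms of $\theta$ and $\eta$ (only $4\gamma<\alpha$), so neither of your two regimes $\gamma\le\eta$ or $\gamma\le 4\theta-2\eta$ need hold: take for instance $\theta=\eta=\alpha/100$ and $\gamma=\alpha/5$. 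Worse, the proposition is later applied (through \rf{eqn:8.68}, in Case 1) precisely with $\gamma>2\theta$ and $\eta=\tfrac32\theta$ (see \rf{eqn:016A} and \rf{eqn:027}), so that $\gamma>2\theta>\max(\eta,\,4\theta-2\eta)$. In that regime $|\sigma|\le r^{2\theta-\gamma}$ is a bound that blows up as $r\to0$, and $r^{4\theta-2\gamma}$ dominates, rather than being dominated by, every term of $\epsilon_4$; shrinking $r_0$ cannot help, since this is a comparison of powers of $r$ that goes the wrong way. Your conclusion $|P(z)|\le 2|\sigma|+\epsilon_4^{1/2}$ is then vacuous. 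Note also that no better control of $\sigma$ is available: Proposition \ref{prop:8.100} bounds only the tangential components $b_{k+i}$, while the components $b_1,\dots,b_k$ entering $\sigma$ are controlled only by \rf{eqn:8.13}.

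The missing case is $|d|>2\epsilon_4^{1/2}$, and it cannot be handled by any pointwise estimate: the inequality $\bigl||z^\perp|^2-\langle d,z^\perp\rangle\bigr|\le\epsilon_4$ is perfectly compatible with $z^\perp$ near $d$, i.e.\ with $|P(z)|$ of order $|d|$. The paper excludes this branch topologically. It observes that each $z$ satisfies either $|z^\perp|\le\epsilon_4^{1/2}$ or $|z^\perp-d|\le\epsilon_4^{1/2}$, that these define disjoint closed subsets $\mathcal{U}_\pm$ of the connected component $\mathcal{U}$ of the origin in $\Sigma_\rho\cap B(0,\tfrac12)$ (see \rf{eqn:8.64}), concludes $\mathcal{U}=\mathcal{U}_+$ because $0\in\mathcal{U}_+$, and finally uses the Reifenberg flatness of $\Sigma_\rho$ to get $\Sigma_\rho\cap B(0,\tfrac14)\subset\mathcal{U}$ as in \rf{eqn:8.65}. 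Some such connectedness argument is essential; your proof as written only covers the (easy) case in which $|\sigma|$ happens to be small.
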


\begin{proof}
Set $z^{\perp}=\sum_{l=1}^k\langle z, e_l\rangle e_l$ for
$z\in\Sigma_\rho\cap B(0, \frac{1}{2})$.
Then (\ref{eqn:8.60}) can be written
\begin{equation}\label{eqn:8.62}
|z^{\perp}(z^{\perp}-d)|\le \epsilon_4(r,\theta,\gamma,\eta),
\end{equation}
where $d=2b^{\perp}r^{-1-\gamma}$. This forces
\begin{equation*}\label{eqn:8.63a}
|z^{\perp}|\le\epsilon_4(r,\theta,\gamma,\eta)^{\frac{1}{2}}\tag{8.59+}
\end{equation*}
or
\begin{equation*}\label{eqn:8.63b}
|z^{\perp}-d|\le\epsilon_4(r,\theta,\gamma,\eta)^{\frac{1}{2}}.\tag{8.59-}
\end{equation*}
If $|d|\le 2\epsilon_4(r,\theta,\gamma,\eta)^{\frac{1}{2}}$, then (\ref{eqn:8.61})
trivially follows from this. So let us assume that
$|d|>2\epsilon_4(r,\theta,\gamma,\eta)^{\frac{1}{2}}$. Denote by $\U$ the connected
component of $\Sigma_\rho\cap B(0, \frac{1}{2})$ containing the origin, and
set
\begin{equation}\label{eqn:8.64}
\U_\pm = \{z\in\U; \hbox{(8.59$\pm$) holds}\}.
\end{equation}
Obviously $\U_+$ and $\U_-$ are closed in $\U$, and since
$\U$ is the disjoint union of $\U_+$ and $\U_-$ (because
$|d|>2\epsilon_4(r,\theta,\gamma,\eta)^{\frac{1}{2}}$),
$\U$ must be equal to $\U_+$. Thus
to prove
(\ref{eqn:8.61}) it is enough to show that
\begin{equation}\label{eqn:8.65}
\Sigma_\rho\cap B(0, \frac{1}{4})\subset\U.
\end{equation}
Since (\ref{eqn:8.3}) holds, $\Sigma_\rho$
is locally Reifenberg flat and the same argument used in
Section 8 of \cite{DKT}
yields (\ref{eqn:8.65}). Proposition \ref{prop:8.101} follows. \qed
\end{proof}

\noindent Note that (\ref{eqn:8.61}) says that if $|b_r|\le
r^{1+2\theta}$, then
\begin{equation}\label{eqn:8.68}
 \beta(0, \frac{1}{4}r^{1+\gamma}) \le
12 \epsilon_4(r,\theta,\gamma,\eta)^{\frac{1}{2}}=:
\epsilon_5(r,\theta,\gamma,\eta),
\end{equation}
where $\beta(0,s)$ is defined as in (\ref{eqn:8.1})
and
\begin{equation}\label{eqn:8.68A}
\epsilon_4(r,\theta,\gamma,\eta)
=C
(r^{\gamma}+r^\theta +r^{2\eta-\gamma}
+ r^{\theta+\eta-\gamma} + r^{4\theta-\eta-\gamma}).
\end{equation}
Here $C$ depends on $n$, $m$ and $K$.
The estimate (\ref{eqn:8.68}) holds for all
exponents $\theta$, $\gamma$ and $\eta$ satisfying
(\ref{eqn:8.38}), (\ref{eqn:8.15E}) and (\ref{eqn:800}).

Recall that $b_r=b$. So far we have omitted the dependence of $r$ to simplify
the notation, as there was no room for confusion. From now on we need
to keep track of it as it will be made clear shortly.

When (\ref{eqn:8.13}) does not hold, i.e.
\begin{equation}\label{eqn:8.69}
|b|>r^{1+2\theta},
\end{equation}
(\ref{eqn:8.12}) and (\ref{eqn:8.5}) tell us that
\begin{equation}\label{eqn:8.70}
|\langle 2br^{-1-\gamma}, z\rangle| \le |\widetilde Q(z)|+Cr^\gamma+CC_K
r^{\alpha-2\gamma}\le C
\end{equation}
for $z\in\Sigma_{r^{1+\gamma}}\cap B(0, \frac{1}{2})$, provided that we choose
$0<\gamma<\frac{\alpha}{2}$, $r<r_0$ and $r_0$ small enough.
Set $\tau=|b|^{-1}b$. Then
\begin{equation}\label{eqn:8.71}
|\langle \tau,z\rangle|\le C|b|^{-1}r^{1+\gamma}\le Cr^{\gamma-2\theta}
\end{equation}
for $z\in\Sigma_{r^{1+\gamma}}\cap B(0, \frac{1}{2})$.
In the codimension 1 case $|\langle \tau,z\rangle|$
measures the distance from $z$ to the $n$-plane orthogonal to $\tau$.
(\ref{eqn:8.71}) implies that
$\beta(0, \frac{1}{4}r^{1+\gamma})\le Cr^{\gamma-2\theta}.$
In this case choosing $\eta$, $\gamma$, $\theta$ appropriately
one can guarantee that
$\beta(0, \frac{1}{4}r^{1+\gamma})$ is bounded by a positive
power of $r$. This case is done in \cite{DKT}.

In codimension
$k=m-n$ we need to
produce a $k$ plane such that $z^{\perp}$, the orthogonal projection
$z\in\Sigma_{r^{1+\gamma}}\cap B(0, \frac{1}{2})$ onto this plane, is
bounded by a positive power on $r$.
To accomplish this we need to choose
$3k$ exponents $\eta_i$, $\gamma_i$, $\theta_i$ and $k+1$ radii $r_i$
with $1\le i\le k$
satisfying
\begin{equation}\label{eqn:001}
0<3\theta_i<\alpha,\ \
0<4\gamma_i<\alpha,\ \ 0<\eta_i<2\theta_i\ \hbox{ and }\ 2\eta_i<\alpha,
\end{equation}
and
\begin{equation}\label{eqn:002}
r_1=r,\ \ \ \ r_{i+1}=r_i^{1+\gamma_i}.
\end{equation}
The difficulty lies on the fact that several additional
compatibility conditions arise along the proof, and we need to check that
they can be satisfied.

\begin{case}
There exists $\,i=1,\ldots, k=m-n$ such that
\begin{equation}\label{eqn:003}
|b_{r_i}|\le r_i^{1+2\theta_i},
\end{equation}
then (\ref{eqn:8.68}) ensures that
\begin{equation}\label{eqn:004}
\beta(0,\frac{1}{4}r_{i+1})\le {\epsilon_5(r_i, \theta_i,
\gamma_i, \eta_i)}.
\end{equation}
Thus we have
\begin{equation}\label{eqn:004A}
\beta(0,\frac{r_{k+1}}{4})\le \frac{r_{i+1}}{r_{k+1}}
\beta(0,\frac{r_{i+1}}{4})\le \frac{r_{i+1}}{r_{k+1}}
{\epsilon_5(r_i, \theta_i, \gamma_i, \eta_i)}.
\end{equation}
\end{case}

\begin{case}
For all $i=1,\cdots, k=m-n$
\begin{equation}\label{eqn:005}
|b_{r_i}|\ge r_i^{1+2\theta_i},
\end{equation}
then (\ref{eqn:8.71}) guarantees that
\begin{equation}\label{eqn:006}
|\langle\tau_i,z\rangle|\le C r^{\gamma_i-2\theta_i}_{i}\ \ \hbox{ for }\ \
z\in\Sigma_{r_{i+1}}\cap B(0,\frac{1}{2}),\ \ \hbox{ where }\ \
\tau_i=\frac{b_{r_i}}{|b_{r_i}|}.
\end{equation}
Thus
\begin{equation}\label{eqn:007}
|\langle\tau_i,x\rangle|\le Cr_{i+1} r^{\gamma_i-2\theta_i}_i =
Cr^{1+2\gamma_i-2\theta_i}_i
\ \ \hbox{ for }\ \
x\in\Sigma\cap B(0,\frac{r_{i+1}}{2}).
\end{equation}
If $j\ge i+1$ then $r_j\le r_{i+1}$. Using the definition of $b_{r_j}$ which
appears in (\ref{eqn:7.3}) we obtain from (\ref{eqn:007}) that
\begin{equation}\label{eqn:008}
|\langle\tau_i, b_{r_j}\rangle|\le C r_{i+1} r^{\gamma_i-2\theta_i}_i.
\end{equation}
The definition of $\tau_j$ combined with (\ref{eqn:005}) and (\ref{eqn:008})
yield
\begin{equation}\label{eqn:009}
|\langle\tau_i, \tau_{j}\rangle|\le C r^{-1-2\theta_j}_j
r_{i+1} r^{\gamma_i-2\theta_i}_i = C r^{-1-2\theta_j}_j
r^{1+2\gamma_i-2\theta_i}_i\ \ \hbox{ for }\ \ j\ge i+1.
\end{equation}
\end{case}

Our goal is to show that in either case there exists $s$, a power of $r$,
such that $\beta(0,s)$ is bounded above by a power of $r$ (i.e.\ of $s$).
In Case 1 it suffices to show that the exponent of $r$ in the right hand side
of (\ref{eqn:004A}) is positive. In Case 2 we first need to show that
the vectors $\tau_l$ for $l=1,\cdots, k$ are linearly independent
(in fact almost orthogonal). This is achieved by showing that the exponent
of $r$ that appears in (\ref{eqn:009}) can be made positive. Once we know that
the vectors $\tau_l$ for $l=1,\cdots, k$ are almost orthogonal
(\ref{eqn:007}) provides an estimate for $\beta(0,\frac{r_{k+1}}{4})$.
In fact assume that $|\langle\tau_i, \tau_{j}\rangle|<\frac{1}{2k}$ for
$i,\ j= 1,\cdots, k$, $i\not = j$. Then (\ref{eqn:007}) yields that
$x^{\perp}$ the orthogonal projection
of $x\in \Sigma\cap B(0,\frac{r_{k+1}}{2})$ satisfies
\begin{equation}\label{eqn:010}
|x^{\perp}|\le C\max_{1\le i\le k}r^{1+2\gamma_i-2\theta_i}_i.
\end{equation}
Therefore
\begin{equation}\label{eqn:011}
\beta(0,\frac{r_{k+1}}{4})\le C r_{k+1}^{-1}
\max_{1\le i\le k}r^{1+2\gamma_i-2\theta_i}_i,
\end{equation}
where $C$ is a constant that depends on $n$, $m$ and $K$.

Our immediate task is to show that by choosing $\theta_i$, $\eta_i$ and
$\gamma_i$ appropriately and satisfying (\ref{eqn:001}) the right
hand sides of (\ref{eqn:004A}),(\ref{eqn:009}), and (\ref{eqn:011})
can be written as positive powers of $r$.

We first focus on the right hand side of (\ref{eqn:009}) for $j\ge i+1$.
Recall that
\begin{equation}\label{eqn:012}
r_j=r^{1+\gamma_{j-1}}_{j-1} = r^{\prod^{j-1}_{l=i}(1+\gamma_l)}_i,
\end{equation}
hence
\begin{equation}\label{eqn:013}
r^{-1-2\theta_j}_j
r^{1+2\gamma_i-2\theta_i}_i =
r^{1+2\gamma_i-2\theta_i-(1+2\theta_j){\prod^{j-1}_{l=i}(1+\gamma_l)}}_i.
\end{equation}
Thus for each $i=1, \cdots, k$ and $j\ge i+1$ we need
\begin{equation}\label{eqn:014}
1+2\gamma_i-2\theta_i-(1+2\theta_j){\prod^{j-1}_{l=i}(1+\gamma_l)}>0
\end{equation}
Similarly the right hand side of (\ref{eqn:011}) yields
\begin{equation}\label{eqn:015}
r_{k+1}^{-1}
r^{1+2\gamma_i-2\theta_i}_i =
r_i^{1+2\gamma_i-2\theta_i- \prod^{k}_{l=i}(1+\gamma_l)},
\end{equation}
which leads to the condition
\begin{equation}\label{eqn:016}
1+2\gamma_i-2\theta_i- \prod^{k}_{l=i}(1+\gamma_l)>0,
\end{equation}
for all $i=1,\cdots, k$.

Note that if (\ref{eqn:014}) is satisfied for $j=k+1$ then so is
(\ref{eqn:016}). Moreover (\ref{eqn:014}) applied to $j=i+1$ requires
that for $i=1,\cdots, k$
\begin{equation}\label{eqn:016A}
\gamma_i > 2\theta_i.
\end{equation}

The right hand side of (\ref{eqn:004A}) produces five conditions for each
$i=1,\cdots, k$. In fact the term
\begin{equation}\label{eqn:016B}
r^{-1}_{k+1} r_{i+1}=r^{1+\gamma_i-\prod^{k}_{l=i}(1+\gamma_l)}_i
\end{equation}
is multiplied by each one of the terms in
$\epsilon_5(r_i,\theta_i,\gamma_i, \eta_i)$. We obtain:
\begin{equation}\label{eqn:017}
1+\frac{3}{2}\gamma_i-\prod^{k}_{l=i}(1+\gamma_l)>0,
\end{equation}
\begin{equation}\label{eqn:018}
1+\gamma_i+\frac{\theta_i}{2}-\prod^{k}_{l=i}(1+\gamma_l)>0,
\end{equation}
\begin{equation}\label{eqn:019}
1+\frac{\gamma_i}{2}+\eta_i-\prod^{k}_{l=i}(1+\gamma_l)>0,
\end{equation}
\begin{equation}\label{eqn:020}
1+\frac{\gamma_i}{2}+\frac{\theta_i}{2}+\frac{\eta_i}{2}
-\prod^{k}_{l=i}(1+\gamma_l)>0,
\end{equation}
\begin{equation}\label{eqn:021}
1+\frac{\gamma_i}{2}+2{\theta_i}-\frac{\eta_i}{2}
-\prod^{k}_{l=i}(1+\gamma_l)>0.
\end{equation}

Using (\ref{eqn:016A}) and (\ref{eqn:001}) we observe that
\begin{equation}\label{eqn:022}
1+\frac{3}{2}\gamma_i-\prod^{k}_{l=i}(1+\gamma_l)
\ge 1+\frac{\gamma_i}{2}+2{\theta_i}-\frac{\eta_i}{2}
-\prod^{k}_{l=i}(1+\gamma_l),
\end{equation}
and
\begin{equation}\label{eqn:023}
1+\gamma_i+\frac{\theta_i}{2}-\prod^{k}_{l=i}(1+\gamma_l)\ge
1+\frac{\gamma_i}{2}+\frac{\theta_i}{2}+\frac{\eta_i}{2}
-\prod^{k}_{l=i}(1+\gamma_l).
\end{equation}
Thus (\ref{eqn:017}) and (\ref{eqn:018}) are satisfied
whenever (\ref{eqn:001}), (\ref{eqn:016}), (\ref{eqn:020}) and (\ref{eqn:021})
hold.

At this point we are ready to choose the form of the exponents. Let
\begin{equation}\label{eqn:024}
\gamma_{l+1}=\kappa\gamma_l,\ \ \theta_{l+1}=\kappa\theta_l,\ \
\eta_{l+1}=\kappa\eta_l,
\end{equation}
with
\begin{equation}\label{eqn:025}
0<\kappa<\frac{1}{16},\ \ 0 <3\theta_1<\alpha,\ \
0<4\gamma_1<\alpha,\ \ \ \hbox{ and }\ \
0<\eta_1=\frac{3}{2}\theta_1<2\theta_1.
\end{equation}
Note that this implies that $2\eta_1<\alpha$.

This choice of $\gamma_1,\ \theta_1, \ \eta_1$ and $\kappa$ ensure
that (\ref{eqn:001}) is satisfied, that $\eta_i= \frac{3}{2}\theta_i$,
and that $\gamma_i<1/4$.

Note that three of the four remaining conditions (\ref{eqn:014}),
(\ref{eqn:019}), (\ref{eqn:020}) and (\ref{eqn:021}) contain the term
$\prod^{k}_{l=i}(1+\gamma_l)$, or a product term which is bounded by it.
Using the fact that for $x\ge 0$ $1+x\le e^x$ and that for $x<1/2$,
$e^x\le 1+x+x^2$ we have
\begin{equation}\label{eqn:026}
\prod^{k}_{l=i}(1+\gamma_l)\le  e^{\sum_{l=i}^k\gamma_l} =e^{\sum_{l=0}^{k-i}
\kappa^l\gamma_i}\le  e^{\frac{\gamma_i}{1-\kappa}}\le
1+ \frac{\gamma_i}{1-\kappa}
+\left(\frac{\gamma_i}{1-\kappa}\right)^2.
\end{equation}
Hence
(\ref{eqn:014}),
(\ref{eqn:019}), (\ref{eqn:020}) and (\ref{eqn:021}) become
\begin{equation}\label{eqn:014N}
2\gamma_i-2\theta_i-2\kappa\theta_i-(1+2\kappa\theta_i)
\left( \frac{\gamma_i}{1-\kappa}
+\left(\frac{\gamma_i}{1-\kappa}\right)^2\right)>0,
\end{equation}
where we used the fact for $j\ge i+1$, $\theta_j\le \kappa\theta_i$.
\begin{equation}\label{eqn:019N}
\frac{\gamma_i}{2}+\eta_i- \frac{\gamma_i}{1-\kappa}
-\left(\frac{\gamma_i}{1-\kappa}\right)^2>0,
\end{equation}
\begin{equation}\label{eqn:020N}
\frac{\gamma_i}{2}+\frac{\theta_i}{2}+\frac{\eta_i}{2}-
 \frac{\gamma_i}{1-\kappa}
-\left(\frac{\gamma_i}{1-\kappa}\right)^2>0,
\end{equation}
\begin{equation}\label{eqn:021N}
\frac{\gamma_i}{2}+2{\theta_i}-\frac{\eta_i}{2}
- \frac{\gamma_i}{1-\kappa}
-\left(\frac{\gamma_i}{1-\kappa}\right)^2>0.
\end{equation}

Combining (\ref{eqn:024}) and (\ref{eqn:025}),(\ref{eqn:014N}),
(\ref{eqn:019N}), (\ref{eqn:020N}) and (\ref{eqn:021N}) become
\begin{equation}\label{eqn:014NN}
2\gamma_1-2\theta_1(1+\kappa)-(1+2\kappa\theta_1)
\left( \frac{\gamma_1}{1-\kappa}
+\left(\frac{\gamma_1}{1-\kappa}\right)^2\right)>0,
\end{equation}
\begin{equation}\label{eqn:019NN}
\frac{\gamma_1}{2}+\frac{3}{2}\theta_1- \frac{\gamma_1}{1-\kappa}
-\left(\frac{\gamma_1}{1-\kappa}\right)^2>0,
\end{equation}
\begin{equation}\label{eqn:020NN}
\frac{\gamma_1}{2}+\frac{5}{4}\theta_1-
 \frac{\gamma_1}{1-\kappa}
-\left(\frac{\gamma_1}{1-\kappa}\right)^2>0.
\end{equation}
Note that if (\ref{eqn:019NN}) is satisfied so is (\ref{eqn:020NN}). Thus we only have two conditions
left to satisfy, namely (\ref{eqn:014NN}) and (\ref{eqn:019NN}). At this point we can choose
\begin{equation}\label{eqn:027}
\gamma_1=\kappa^2(1-\kappa) \ \hbox{ and }\ \theta_1=\frac{\kappa^2(1-\kappa)}{2(1+4\kappa)},\
\hbox{ provided }\ 4\kappa^2(1-\kappa)<\alpha.
\end{equation}
Recalling that $\kappa<\frac{1}{16}$, a straightforward calculation shows that
\begin{eqnarray}\label{eqn:028}
2\gamma_1 & - & 2\theta_1(1+\kappa)-(1+2\kappa\theta_1)
\left( \frac{\gamma_1}{1-\kappa}
+\left(\frac{\gamma_1}{1-\kappa}\right)^2\right)\\[3mm]
& \ge & 2\kappa^2(1-\kappa)-2\theta_1(1+\kappa)-\kappa^2(1+2\kappa\theta_1)(1+\kappa^2) \nonumber\\
[3mm]
& \ge & \frac{\kappa^3}{1+4\kappa}(2-8\kappa-6\kappa^2)\ge \frac{\kappa^3}{1+4\kappa},\nonumber
\end{eqnarray}
and
\begin{eqnarray}\label{eqn:029}
\frac{\gamma_1}{2}+\frac{3}{2}\theta_1- \frac{\gamma_1}{1-\kappa}
-\left(\frac{\gamma_1}{1-\kappa}\right)^2 &\ge & \frac{1}{2}\left( \kappa^2(1-\kappa) +3\theta_1-
2\kappa^2-2\kappa^4\right)\\[3mm]
&\ge& \frac{\kappa^2}{4(1+4\kappa)}\left(1-13\kappa-12\kappa^2-16\kappa^3\right)\nonumber\\[3mm]
&\ge &\frac{\kappa^2}{32(1+4\kappa)}.\nonumber
\end{eqnarray}

Inequalities (\ref{eqn:028}), (\ref{eqn:028}) combined with (\ref{eqn:004A}), (\ref{eqn:8.68}),
(\ref{eqn:8.68A}), (\ref{eqn:011}),
(\ref{eqn:014}), (\ref{eqn:019}),
(\ref{eqn:020}) and (\ref{eqn:021}) show that for $\kappa$ such that
$4\kappa^2(1-\kappa)<\alpha$
\begin{equation}\label{eqn:030}
\beta(0,\frac{r_{k+1}}{4})\le C r^{\frac{\kappa^3}{1+4\kappa}}
\ \hbox{ where }\ r_{k+1}= r^{\prod^{k}_{l=1}(1+\gamma_l)}.
\end{equation}
Note that (\ref{eqn:027}) and (\ref{eqn:028}) ensure that
\begin{equation}\label{eqn:031}
\prod^{k}_{l=1}(1+\gamma_l)\le 1+\kappa+\kappa^2.
\end{equation}
Therefore for $t=\frac{r_{k+1}}{4}$ (\ref{eqn:030}) yields
\begin{equation}\label{eqn:032}
\beta(0,t)\le C t^{\frac{\kappa^3}{(1+4\kappa)(1+\kappa+\kappa^2)}},
\end{equation}
where $C$ is a constant that depends on $n$, $m$, $\alpha$ and
our specific choice of $\kappa$.

\section{On the flatness of asymptotically optimally doubling measures}

Recall the following result from Preiss \cite{Pr}. See also
\cite{DeL} [Propositions 6.18 and 6.19] for more details.

\begin{thm}\label{teopreiss}
There exists a constant $\ve_0>0$ depending only on $n$ and $d$ such
that if $\nu$ is an $n$-uniform measure on $\RR^m$ (normalized so
that $\nu(B(x,r))= r^n$ for all $x\in\supp(\nu)$, $r>0$) such that
its tangent measure $\lambda$ at $\infty$ satisfies
\begin{equation}\label{infty}
\min_{L\in G(n,m)} \int_{B(0,1)}\dist(x,L)^2d\lambda(x)  \leq \ve_0^2,
\end{equation}
then $\nu$ is flat.
Here $G(n,m)$ stands for the collection  of all
$n$-planes in $\RR^m$, $\lambda$ is normalized so that
$\lambda(B(x,r))= r^n$ for all $x\in\supp(\lambda)$, $r>0$.
\end{thm}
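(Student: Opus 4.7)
The plan is to combine two ingredients: a rigidity statement saying that an $n$-uniform measure which is quantitatively close to flat must actually be flat, and a transfer lemma saying that if \emph{some} tangent measure of a uniform measure at $\infty$ is flat then the measure itself is flat. The quantitative smallness of \eqref{infty} feeds the first ingredient, and the second ingredient does the final step.

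\textbf{Step 1 (structure of $\lambda$).} First I would observe that tangent measures at $\infty$ of $n$-uniform measures are again $n$-uniform, and in addition inherit a cone-like homogeneity: there is a sequence $s_k\to\infty$ with $(T_{0,s_k})_\#\lambda\rightharpoonup\lambda$, where $T_{0,s}(x)=x/s$. This fixes the normalization $\lambda(B(x,r))=r^n$ for $x\in\spn$ and gives polynomial dependence in $r$ of the moments
\[
b(r)=\int_{B(0,r)}x\,d\lambda(x),\qquad Q_r(v)=\int_{B(0,r)}\langle v,x\rangle^2\,d\lambda(x).
\]
In particular the trace identity $\mathrm{Tr}(Q_r)=\int_{B(0,r)}|x|^2 d\lambda(x)=\tfrac{n}{n+2}r^{n+2}$ holds \emph{exactly} (no H\"older error), because $\lambda$ is genuinely $n$-uniform.

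\textbf{Step 2 (diagonalization under \eqref{infty}).} Let $L_0\in G(n,m)$ be an $n$-plane attaining the minimum in \eqref{infty}, and set $\pi^\perp$ the projection onto $L_0^\perp$. Then $\int_{B(0,1)}|\pi^\perp x|^2\,d\lambda\le\ve_0^2$. Diagonalizing $Q_1$ in an orthonormal basis $(e_1,\dots,e_m)$ with $e_1,\dots,e_k$ spanning $L_0^\perp$ ($k=m-n$), this gives $\sum_{i=1}^k\lambda_i\le\ve_0^2$, while the exact trace identity forces $\sum_{i=k+1}^m\lambda_i=\tfrac{n}{n+2}-O(\ve_0^2)$. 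Together with the upper bound $\lambda_j\le\tfrac{1}{n+2}+C\ve_0^2$ derived from the uniform Ahlfors condition, this pins down $\lambda_{k+i}=\tfrac{1}{n+2}+O(\ve_0^2)$ for $1\le i\le n$, in analogy with Lemma~\ref{lem:8.39}.

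\textbf{Step 3 (rigidity: from almost-flat moment to exact flatness).} This is the main obstacle. The goal is to upgrade the $L^2$ flatness of Step~2 into the statement $\spn\subset L_0$. I would use Preiss's moment polynomial machinery: for an $n$-uniform measure, the function
\[
F(s)=\int e^{-|x|^2/(2s)}\,d\lambda(x)
\]
is analytic in $s>0$ and its expansion coefficients at $s=\infty$ are precisely symmetric polynomial moments of $\lambda$. The conical homogeneity from Step~1 forces a rigid relation between successive coefficients. A standard Preiss-type computation then shows that the higher moment $\int\dist(x,L_0)^{2}e^{-|x|^2/2s}\,d\lambda(x)$ is controlled by a fixed power of the second moment $\int\dist(x,L_0)^2\,d\lambda$, and taking $\ve_0$ small enough the only possibility is $\dist(x,L_0)\equiv 0$ on $\spn$. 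Uniformity plus $\spn\subset L_0$ yields $\lambda=\H^n\res L_0$, i.e.\ $\lambda$ is flat.

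\textbf{Step 4 (transfer to $\nu$).} To conclude that $\nu$ itself is flat, I invoke the connectedness-of-tangents argument (Proposition~6.18 in \cite{DeL}): the set of tangent measures of $\nu$ at $\infty$ is connected in the weak topology, consists entirely of $n$-uniform measures, and within this set the flat measures form a relatively open and closed subset by Marstrand's density theorem combined with Preiss's uniqueness for $n$-uniform measures with a flat tangent. Hence the existence of one flat tangent at $\infty$ propagates to all of them, and in particular forces $\nu$ itself (which appears as a tangent of its own blow-downs under an iteration argument) to be flat.

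The hard step is Step~3: translating a one-parameter moment bound into pointwise support information is where Preiss's polynomial/analytic trick is essential, and it is the only place in the argument where the hypothesis $\ve_0\ll 1$ is really used quantitatively.
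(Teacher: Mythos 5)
First, note that the paper does not prove Theorem \ref{teopreiss}: it is quoted from Preiss \cite{Pr}, with \cite{DeL} (Propositions 6.18 and 6.19) cited for the details. So you are attempting to reconstruct one of the deepest results of \cite{Pr}, and while your outline is architecturally faithful to that proof (blow-down to a conical uniform measure, quantitative rigidity for conical uniform measures, transfer of flatness back to $\nu$), both load-bearing steps have genuine gaps. In Step 3, the claim that ``a standard Preiss-type computation shows that the higher moment is controlled by a fixed power of the second moment'' is not a computation one can wave at --- it \emph{is} the rigidity theorem. The single $L^2$ bound \rf{infty} controls one quadratic form; your Step 2 bookkeeping (which mirrors Lemma \ref{lem:8.39}) extracts eigenvalue information from it, but nothing in the sketch converts this into the pointwise statement $\supp(\lambda)\subset L_0$. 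The actual argument expands $s\mapsto\int e^{-s|z|^2}\,d\lambda(z)$, extracts symmetric tensor moments order by order, shows that the support of a conical uniform measure lies in an explicit algebraic variety built from these moments, and only then runs a delicate self-improvement in $\ve_0$. The Kowalski--Preiss cone illustrates what is at stake: it is conical, uniform, non-flat, and its normalized second moment to the best plane is a definite dimensional constant; proving that this constant cannot be made small is exactly the hard content you are skipping.

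Step 4 is based on a misapplication. The tangent measure at infinity of a uniform measure is \emph{unique} (part of Proposition 6.18 in \cite{DeL}), so there is no connected family of tangents at infinity inside which flat measures could form a relatively open and closed subset; the connectivity argument you invoke belongs to a different part of Preiss's proof (tangent measures at a point of a general measure with positive finite density, used to prove Theorems \ref{thm:1.16} and \ref{mth}-type statements). Nor does $\nu$ ``appear as a tangent of its own blow-downs'': blow-downs lose information and there is no iteration recovering $\nu$ from $\lambda$. The correct transfer --- a uniform measure whose tangent at infinity is flat is itself flat --- is a separate substantial theorem in \cite{Pr}/\cite{DeL}, again proved through the moment expansion by showing that flatness of the blow-down forces the relevant quadratic term in the expansion of the moments of $\nu$ to degenerate at every scale and every center. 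As written, your proposal names the right skeleton, but neither of the two essential steps is actually established.
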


We need to define a smooth version of the usual coefficients
$\beta_2$. To this end, let $\vphi$ be a $C_c^\infty$ radial
function with $\chi_{B(0,2)}\leq \vphi \leq \chi_{B(0,3)}$.
Let $B=B(x_0,r)$ be a ball with centered at $x_0\in\supp(\mu)$. We denote by
\begin{equation}
\wt\beta_{2,\mu}(B) =
\min_{L\in G(n,m)} \biggl( \frac1{r^{n+2}}\int
\vphi\biggl(\frac{|x-x_0|}r\biggr)
\dist(x,L)^2d\mu(x)\biggr)^{1/2}.
\end{equation}

The following two theorems are the key tools in the proof of
Theorem \ref{thm:1.8A}. We postpone their proofs to the end of the section.
We first indicate how they are used to prove Theorem \ref{thm:1.8A}.

\begin{thm}\label{mth}
Let $\mu$ be an asymptotically optimally doubling measure supported on
$\Sigma\subset\RR^m$.
Let $K\subset\RR^m$ be compact and suppose that
\begin{equation}\label{eqn:04.1}
C_0^{-1}r^n\leq \mu(B(x,r))\leq C_0r^n\qquad\mbox{for $x\in
K\cap\Sigma$, $0<r\leq\diam(K)$}.
\end{equation}
For any $\eta>0$, there exists $\delta>0$ depending only on $\eta$, $n$, $m$,
$\mu$, $K$ and  $C_0$ such that if $B$ is a ball contained in $K$
and centered at $K\cap\Sigma$ with
$\wt\beta_{2,\mu}(B)\leq\delta$, then $\wt\beta_{2,\mu}(P)\leq\eta$
for any ball $P\subset B$ centered at $K\cap\Sigma$.
\end{thm}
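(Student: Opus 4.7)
The natural strategy is a compactness argument by contradiction, which I outline in four steps.

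Step 1 (negation and rescaling). Suppose the conclusion fails. Then there exist $\eta>0$, balls $B_k=B(x_k,r_k)\subset K$ with $x_k\in K\cap\Sigma$ and $\wt\beta_{2,\mu}(B_k)\to 0$, and balls $P_k=B(y_k,s_k)\subset B_k$ with $y_k\in K\cap\Sigma$ and $\wt\beta_{2,\mu}(P_k)>\eta$. Rescale $\mu$ at the scale of $P_k$ by setting
\begin{equation*}
\mu_k(A):=s_k^{-n}\,\mu(y_k+s_kA).
\end{equation*}
A direct change of variables shows $\wt\beta_{2}$ is invariant under this rescaling, so $\wt\beta_{2,\mu_k}(B(0,1))>\eta$ while $\wt\beta_{2,\mu_k}(\wt B_k)\to 0$, where $\wt B_k=B((x_k-y_k)/s_k,R_k)$ and $R_k:=r_k/s_k\ge 1+|(x_k-y_k)/s_k|$. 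Pass to subsequences so that $s_k\to s_\infty\in[0,\diam(K)]$ and $R_k\to R_\infty\in[1,\infty]$.

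Step 2 (compactness and identification of the limit). The Ahlfors regularity hypothesis \eqref{eqn:04.1} rescales to a uniform bound $\mu_k(B(0,R))\le C_0R^n$ on compact sets, so by Banach--Alaoglu a subsequence converges weakly to a Radon measure $\nu$ on $\RR^m$, also satisfying $C_0^{-1}r^n\le \nu(B(x,r))\le C_0r^n$ for $x\in\support\nu$. When $s_\infty=0$ the asymptotic optimal doubling of $\mu$ passes to the limit: for every $x\in\support\nu$, $r>0$ and $t\in[1/2,1]$,
\begin{equation*}
\nu(B(x,tr))/\nu(B(x,r))=t^n,
\end{equation*}
which iterates to all $t\in(0,1]$. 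After absorbing a multiplicative constant, $\nu$ is $n$-uniform in the sense $\nu(B(x,r))=r^n$. The case $s_\infty>0$ is easier and can be disposed of by a direct continuity argument (all quantities are controlled by finitely many fixed-scale balls, where small $\wt\beta_{2,\mu}(B_\infty)$ forces $\mu$ to be supported on an $n$-plane inside $2B_\infty$, whence $\wt\beta_{2,\mu}(P_\infty)=0$, a contradiction); so we focus on $s_k\to 0$.

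Step 3 (flatness of $\nu$). Now I distinguish the two geometric regimes.

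\emph{Case A: $R_\infty<\infty$.} Then $\wt B_k\to\wt B_\infty$ for some ball $\wt B_\infty\supset B(0,1)$. Choose minimizers $L_k$ for $\wt\beta_{2,\mu_k}(\wt B_k)$ and pass to a convergent subsequence $L_k\to L_\infty$. By weak convergence, uniform continuity of $\vphi(|\cdot-z_k|/R_k)\dist(\cdot,L_k)^2$ on compact sets, and $\wt\beta_{2,\mu_k}(\wt B_k)\to 0$, we obtain $\wt\beta_{2,\nu}(\wt B_\infty)=0$. Hence $\nu\res\wt B_\infty$ is supported on $L_\infty$, and $n$-uniformity forces $\nu\res U=\H^n\res(L_\infty\cap U)$ on a neighborhood $U$ of $B(0,1)$, so $\wt\beta_{2,\nu}(B(0,1))=0$.

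\emph{Case B: $R_\infty=\infty$.} Then $\nu$ is close to flat at arbitrarily large scales: for any $R>0$ we have $B(0,R)\subset\wt B_k$ eventually, and the same lower semi-continuity gives $\wt\beta_{2,\nu}(B(0,R))=0$ (taking $k\to\infty$, using that the minimizers $L_k$ may be translated to pass through the origin thanks to $0\in\support\nu$). Consequently any tangent measure $\lambda$ of $\nu$ at infinity is supported on an $n$-plane, so the Preiss hypothesis \eqref{infty} is satisfied trivially. Theorem \ref{teopreiss} then forces $\nu$ to be flat, i.e.\ $\nu=\H^n\res L_0$ for some $n$-plane $L_0$, and again $\wt\beta_{2,\nu}(B(0,1))=0$.

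Step 4 (contradiction). For every fixed affine $n$-plane $L$, weak convergence and the compact support of $\vphi$ give
\begin{equation*}
\int\vphi(|x|)\dist(x,L)^2\,d\nu(x)=\lim_{k\to\infty}\int\vphi(|x|)\dist(x,L)^2\,d\mu_k(x)\ge\liminf_{k\to\infty}\wt\beta_{2,\mu_k}(B(0,1))^2\ge\eta^2.
\end{equation*}
Taking the infimum over $L$ yields $\wt\beta_{2,\nu}(B(0,1))\ge\eta>0$, contradicting the vanishing obtained in Step 3.

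The main obstacle is Case B, where one must verify that the Preiss hypothesis applies: this requires both that $\nu$ be genuinely $n$-uniform with the correct normalization (which is where the Ahlfors regularity plus asymptotic optimal doubling are crucial) and that the flatness of $\mu_k$ on $\wt B_k$ with $R_k\to\infty$ passes through to the tangent measure of $\nu$ at $\infty$. The $s_\infty>0$ case needs separate, easier care, and a mild bookkeeping issue is that $\vphi$ has support in $B(0,3)$ rather than $B(0,1)$, which requires using $2B$-neighborhoods throughout.
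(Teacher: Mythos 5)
There is a genuine gap, and it sits exactly where the real difficulty of the theorem lies: your Case B. From $\wt\beta_{2,\mu_k}(\wt B_k)\to 0$ with $R_k\to\infty$ you conclude that ``the same lower semi-continuity gives $\wt\beta_{2,\nu}(B(0,R))=0$'' for every fixed $R$. This does not follow. The coefficient $\wt\beta_{2,\mu_k}(\wt B_k)^2$ carries the normalization $R_k^{-(n+2)}$, so the hypothesis only gives $\int_{B(0,3R)}\dist(x,L_k)^2\,d\mu_k \leq \wt\beta_{2,\mu_k}(\wt B_k)^2\,R_k^{n+2}$, and after renormalizing at scale $R$ one gets $\wt\beta_{2,\mu_k}(B(0,R))^2 \lesssim \wt\beta_{2,\mu_k}(\wt B_k)^2\,(R_k/R)^{n+2}$. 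Since the contradiction hypothesis permits $s_k/r_k\to 0$ arbitrarily fast, the factor $(R_k/R)^{n+2}$ can overwhelm the decay of $\wt\beta_{2,\mu_k}(\wt B_k)$, and nothing forces $\wt\beta_{2,\nu}(B(0,R))$ to vanish (think of a plane with a fixed-size bump near the origin: its $\wt\beta_2$ at scale $R_k$ tends to $0$ while at scale $1$ it does not). In other words, smallness of $\wt\beta_2$ at one large scale gives no control at much smaller scales, and your one-shot blow-up at the scale of $P_k$ skips over all the intermediate scales where no information is available. (Your Case A and the case $s_\infty>0$ are essentially the easy ``comparable scales'' situation and are fine modulo the $\supp\vphi\subset B(0,3)$ bookkeeping you flag; Step 4 is also fine.)

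The paper's proof is organized precisely to supply the missing intermediate-scale control. It first proves a lemma (Lemma \ref{mlem}): there is a threshold $\ve_1$, \emph{independent of the target} $\delta_0$, and an integer $N=N(\mu,K,C_0,\delta_0)$, such that $\wt\beta_{2,\mu}(2^kB)\leq\ve_1$ for all $1\leq k\leq N$ forces $\wt\beta_{2,\mu}(B)\leq\delta_0$. That lemma is proved by the compactness-plus-Preiss argument you have in mind, but the blow-up limit $\nu$ there genuinely satisfies $\wt\beta_{2,\nu}(B(0,2^k))\lesssim\ve_1$ for \emph{all} $k\geq 0$ because the hypothesis controls all $N$ dyadic scales above $B_j$ with $N=j\to\infty$; this is what legitimately puts the tangent measure at infinity within reach of Theorem \ref{teopreiss}. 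The theorem then follows by a downward induction on dyadic scales: taking $\delta$ small enough handles all balls $P$ with $r(P)\geq 2^{-N}r(B)$ directly, and since $\delta_0=\min(\ve_1,\eta)\leq\ve_1$ the conclusion at each scale re-validates the hypothesis of the lemma one scale further down. If you want to repair your argument, you must either prove such an iteration lemma or otherwise manufacture control at every scale between $r(P_k)$ and $r(B_k)$; the contradiction scheme as written cannot produce it.
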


\begin{thm}\label{b2t}
Let $\mu$ be an asymptotically optimally doubling measure supported on
$\Sigma\subset\RR^m$. Assume that $0\in \Sigma$
Let $K\subset\RR^m$ be a compact set such that $B(0,2)\subset K$, and
suppose that
\begin{equation}\label{eqn:04.2}
C_0^{-1}r^n\leq \mu(B(x,r))\leq C_0r^n\qquad\mbox{for $x\in
K\cap\Sigma$, $0<r\leq\diam(K)$}.
\end{equation}
Given $\epsilon>0$, there exists $\delta\in (0,\ve_0)$ depending only on
$\epsilon$, $n$, $m$, $\mu$, $K$ and  $C_0$ such that if
$\wt\beta_{2,\mu}(B)\leq\delta$,
for every ball $B\subset B(0,2)$ centered at $K\cap\Sigma$ then there exists
$R>0$ such that $\theta (x,r)<\epsilon$ for all $x\in \Sigma\cap B(0,1)$ and
$r<R$.
\end{thm}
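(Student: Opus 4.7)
The plan is to argue by contradiction, combining a blow-up of $\mu$ at a point of $\Sigma$ with the rigidity statement of Preiss recorded as Theorem \ref{teopreiss}. Suppose the conclusion fails: then there exists $\epsilon>0$ for which no choice of $R$ works, so one can extract points $x_j\in\Sigma\cap B(0,1)$ and radii $r_j\downarrow 0$ with $\theta_\Sigma(x_j,r_j)\ge\epsilon$. Rescale by setting
\[
\mu_j(A):=r_j^{-n}\mu(x_j+r_jA),\qquad \Sigma_j:=r_j^{-1}(\Sigma-x_j).
\]
The Ahlfors--regularity bound \eqref{eqn:04.2} makes $\{\mu_j\}$ locally uniformly bounded, so along a subsequence $\mu_j\wto\nu$ and $\Sigma_j\to\supp\nu$ in Hausdorff distance on bounded sets. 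The asymptotic optimal doubling of $\mu$, applied at the points $x_j+r_jy$ (which lie in $K$ once $j$ is large and $y$ stays in a fixed compact), forces
\[
\frac{\mu_j(B(y,s))}{\mu_j(B(y,t))}=\frac{\mu(B(x_j+r_jy,r_js))}{\mu(B(x_j+r_jy,r_jt))}\longrightarrow (s/t)^n
\]
for every $y\in\supp\nu$ and $0<s<t$. Combined with \eqref{eqn:04.2} this yields $\nu(B(y,r))=c\,r^n$ for all $y\in\supp\nu$ and $r>0$ with $c$ independent of $y$; after rescaling $\nu$ we may assume $c=1$, so $\nu$ is $n$-uniform in the sense of Theorem \ref{teopreiss}.

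I would then transfer the smallness of $\wt\beta_{2,\mu}$ to $\nu$ at every scale. A direct change of variables gives
\[
\wt\beta_{2,\mu_j}(B(0,R))=\wt\beta_{2,\mu}(B(x_j,r_jR)),
\]
so the hypothesis $\wt\beta_{2,\mu}(B)\le\delta$ yields $\wt\beta_{2,\mu_j}(B(0,R))\le\delta$ whenever $r_jR\le 1$, i.e.\ for every fixed $R$ once $j$ is large. Because $\vphi$ is continuous and compactly supported and $\dist(\cdot,L)^2$ depends continuously on $L$ in the compact Grassmannian $G(n,m)$, one can pass $\mu_j\wto\nu$ to the limit in both $x$ and in the minimiser over $L$, obtaining
\[
\wt\beta_{2,\nu}(B(0,R))\le\delta\qquad\text{for every }R>0.
\]
Picking a tangent measure $\lambda$ of $\nu$ at infinity (which exists and is itself $n$-uniform because $\nu$ is), the scaling of $\wt\beta_2$ gives $\wt\beta_{2,\lambda}(B(0,1))\le\delta$, and since $\vphi\ge\chi_{B(0,2)}\ge\chi_{B(0,1)}$ this implies
\[
\min_{L\in G(n,m)}\int_{B(0,1)}\dist(x,L)^2\,d\lambda(x)\le\delta^2.
\]
If $\delta<\ve_0$ is chosen at the outset, Theorem \ref{teopreiss} forces $\nu$ to be flat, i.e.\ $\supp\nu$ is an $n$-plane through $0$.

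The contradiction is then immediate: Hausdorff convergence $\Sigma_j\to\supp\nu$ on $B(0,1)$ gives $\theta_{\Sigma_j}(0,1)\to 0$, whereas $\theta_{\Sigma_j}(0,1)=\theta_\Sigma(x_j,r_j)\ge\epsilon$ by construction. The main obstacles I anticipate lie in verifying that the blow-up limit $\nu$ is genuinely $n$-uniform -- this requires applying the asymptotic optimal doubling hypothesis \emph{uniformly} in $y$ over compact sets, not just at the base points $x_j$ -- and in passing $\wt\beta_{2}$ cleanly through the two successive weak limits (blow-up to $\nu$ and blow-down to $\lambda$), which relies on the uniform bounds afforded by $n$-uniformity of the intermediate and final measures to control the tail of the integrand. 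Both ingredients are standard in the Preiss/De Lellis framework, but they form the technical heart of the argument.
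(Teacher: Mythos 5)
Your proposal is correct and follows essentially the same route as the paper: a contradiction argument, blow-up at the points of non-flatness, $n$-uniformity of the weak limit (the paper cites \cite{KT}, Theorem 2.2, for exactly the uniform-doubling/chaining step you flag as the technical heart), transfer of the $\wt\beta_2$ smallness through the weak limit as in \rf{lim1}, and flatness of the limit measure contradicting $\theta_{\Sigma_j}(0,1)\geq \epsilon$. The only (cosmetic) divergence is at the end: the paper's contradiction setup yields $\wt\beta_{2,\mu_\infty}(B(0,r))=0$ for all $r$, so the support lies in an $n$-plane outright, whereas you keep $\delta>0$ fixed and pass to the tangent measure at infinity to invoke Theorem \ref{teopreiss}; both conclusions are valid.
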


\begin{cor}\label{cort}
Let $\mu$ be an asymptotically optimally doubling measure supported on
$\Sigma\subset\RR^m$
Let $K\subset\RR^m$ be compact set and suppose that
\begin{equation}\label{eqn:04.3}
C_0^{-1}r^n\leq \mu(B(x,r))\leq C_0r^n\qquad\mbox{for $x\in
K\cap\Sigma$, $0<r\leq\diam(K)$}.
\end{equation}
Given $\epsilon>0$, there exists $\delta\in (0,\ve_0)$ depending only on
$\epsilon$, $n$, $m$, $\mu$, $K$ and  $C_0$ such that if
$\wt\beta_{2,\mu}(B(x_0, 4R_0))\leq\delta$,
where $x_0\in \Sigma$  and $B(x_0, 4R_0)\subset K$, then there exists
$R>0$ such that $\theta (x,r)<\epsilon$ for all $x\in \Sigma\cap B(x_0,2R_0)$
and $r<R$, i.e.\ $\Sigma\cap B(x_0, 2R_0)$ is $\epsilon$-Reifenberg flat.
\end{cor}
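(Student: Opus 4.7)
The plan is to combine Theorem \ref{mth} with Theorem \ref{b2t} through an affine rescaling. Theorem \ref{mth} upgrades the single-ball smallness of $\wt\beta_{2,\mu}(B(x_0,4R_0))$ into uniform smallness on every sub-ball, and Theorem \ref{b2t} then converts this into Reifenberg flatness.

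First, I would normalize by applying the affine map $T(y)=(y-x_0)/(2R_0)$, which sends $x_0$ to the origin, $B(x_0,4R_0)$ to $B(0,2)$, and $B(x_0,2R_0)$ to $B(0,1)$. Under this change of variables the pushforward $\mu'=(2R_0)^{-n}T_\ast\mu$ remains $n$-Ahlfors regular on $T(K)$ with the same constant $C_0$; it is still asymptotically optimally doubling because $R_t^{\mu'}(x,r)=R_t^{\mu}(T^{-1}(x),2R_0 r)$, which still tends to zero uniformly on compact subsets of $\Sigma$ as $r\to 0^+$; and the coefficient $\wt\beta_{2,\mu}$ is scale invariant by design of the $r^{n+2}$ normalization. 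Renaming $T(K),\mu'$ back to $K,\mu$, we may assume $B(0,2)\subset K$, $0\in K\cap\Sigma$, and that the hypothesis reads $\wt\beta_{2,\mu}(B(0,2))\le\delta$.

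Now, given $\epsilon>0$, I would first select $\delta_0\in(0,\ve_0)$ to be the constant from Theorem \ref{b2t} associated with $\epsilon$, and then select $\delta_1>0$ from Theorem \ref{mth} applied with $\eta=\delta_0$. Setting $\delta:=\delta_1$, the hypothesis $\wt\beta_{2,\mu}(B(0,2))\le\delta_1$ applies to the ball $B(0,2)$, which is contained in $K$ and centered at $0\in K\cap\Sigma$. Theorem \ref{mth} therefore yields $\wt\beta_{2,\mu}(P)\le\delta_0$ for every sub-ball $P\subset B(0,2)$ centered on $K\cap\Sigma$, which is precisely the hypothesis of Theorem \ref{b2t}. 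That theorem then produces $R>0$ with $\theta_\Sigma(x,r)<\epsilon$ for all $x\in\Sigma\cap B(0,1)$ and $r<R$. Undoing $T$ gives the conclusion with radius $2R_0 R$.

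There is no substantive obstacle here; the argument is essentially a chaining of the two main theorems through an affine reduction. The only point requiring care is that the dependence of $\delta_0$ on $(\epsilon,n,m,\mu,K,C_0)$ and of $\delta_1$ on $(\delta_0,n,m,\mu,K,C_0)$ composes into an admissible dependence for $\delta$ in the corollary, which is automatic since both theorems list precisely the same parameter set.
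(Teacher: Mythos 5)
Your argument is correct and is exactly the route the paper intends: the corollary is stated without a separate proof precisely because it is the immediate combination of Theorem \ref{mth} (propagating smallness of $\wt\beta_{2,\mu}$ from $B(x_0,4R_0)$ to all sub-balls centered on $K\cap\Sigma$) with Theorem \ref{b2t} (converting that uniform smallness into $\epsilon$-Reifenberg flatness of $\Sigma\cap B(x_0,2R_0)$), after translating and dilating so that $B(x_0,4R_0)$ becomes $B(0,2)$. The one point I would tighten is your closing claim that the composition of constants is ``automatic'': after rescaling, the parameters $\mu$ and $K$ appearing in the two theorems are replaced by the rescaled measure and compact set, which depend on $x_0$ and $R_0$, so to obtain the uniformity asserted in the corollary one should apply Theorem \ref{mth} in the original coordinates and observe that the Ahlfors constant and the asymptotic-doubling modulus of the rescaled data are controlled by those of $(\mu,K)$, uniformly in $x_0$ and $R_0$.
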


\noindent{\bf Proof of Theorem \ref{thm:1.8A}}:
First note that (\ref{eqn:7.2AA}) ensures that condition
(\ref{eqn:04.3}) is satisfied. It also implies
that the density of $\mu$
exists and equals 1 everywhere. Therefore Preiss' work (see \cite{Pr})
yields that $\Sigma$ is $n$-rectifiable. Furthermore
$\mu={\cal H}^n\res\Sigma$. Thus given $\eta\in (0, \ve_0)$
for ${\cal H}^n$- a.e
$x\in \Sigma$ there
exists $\rho>0$ such that for $r<\rho$, $\theta(x,r)\le \eta$.
Let
\begin{equation}\label{defnr}
{\cal R}=\{x\in \Sigma:\limsup_{r\rightarrow 0}\theta(x,r)=0\}
\end{equation}
Note that ${\cal H}^n({\cal S})=0$ where ${\cal S}=\Sigma\backslash {\cal R}$.
For $x_0\in {\cal R}$ there exists
$R_0$ is such that $\theta(x_0, r)\le \eta$
for $r\le 8R_0$. This implies that
$\wt\beta_{2,\mu}(B(x_0, 4R_0))\leq C\eta$, where $C$ only depends on $C_0$.
For $\epsilon\in (0,\delta(n,m))$ where $\delta(n,m)$ is as in Theorem
\ref{thm:7.1}, by Corollary \ref{cort} we can find $\eta$ so that
$C\eta\le \delta\le \ve_0$, which ensures that $\Sigma\cap B(x_0, 2R_0)$ is
$\delta(n,m)$ Reifenberg flat. We use Theorem \ref{thm:7.1} to conclude
that $\Sigma\cap B(x_0, R_0)$ is a $C^{1,\beta}$ $n$-dimensional submanifold.
In particular this implies that ${\cal R}$ is open in $\Sigma$ because,
$\Sigma\cap B(x_0, 2R_0)\subset {\cal R}$.\qed

To prove Theorem \ref{mth} we need the following result:

\begin{lem} \label{mlem}
Let $\mu$ be an asymptotically optimally doubling measure on $\RR^m$.
Let $K\subset\RR^m$ be compact and let $\delta_0$ be any positive
constant. Suppose that
$$C_0^{-1}r^n\leq \mu(B(x,r))\leq C_0r^n\qquad\mbox{for $x\in
K\cap\Sigma$, $0<r\leq\diam(K)$}.$$
 There exists some constant
$\ve_1$ depending on $\ve_0$ and $C_0$ (but not on $\delta_0$) and
an integer $N>0$ depending only on $\mu$, $K$, $C_0$, and
$\delta_0$, such that if $B$ is a ball centered at $\Sigma$ such
that $2^NB\subset K$ and
\begin{equation}\label{eqn:ml}
\wt\beta_{2,\mu}(2^kB)\leq \ve_1\qquad \mbox{for $1\leq k\leq N$},\ \
\hbox{then}\ \  \wt\beta_{2,\mu}(B)\leq\delta_0.
\end{equation}
\end{lem}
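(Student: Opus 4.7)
The plan is to argue by contradiction via a blow-up and tangent measure argument. Fix $\ve_1 = \ve_0$, the constant from Theorem \ref{teopreiss}. Suppose the conclusion fails: then there exists a fixed $\delta_0>0$ and, for every $N\in\NN$, a ball $B_N=B(x_N,r_N)$ centered at $x_N\in K\cap\Sigma$ with $2^N B_N\subset K$, satisfying $\wt\beta_{2,\mu}(2^k B_N)\leq \ve_1$ for $1\leq k\leq N$ but $\wt\beta_{2,\mu}(B_N)>\delta_0$. Because $K$ is compact and $2^N r_N\leq \diam(K)$, we have $r_N\to 0$. Normalize by setting $\mu_N(A)=r_N^{-n}\mu(x_N+r_N A)$. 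Then $\mu_N$ is $n$-Ahlfors regular on $B(0,2^N)$ with the same constant $C_0$, and scale-invariance of $\wt\beta_2$ converts the hypotheses into $\wt\beta_{2,\mu_N}(B(0,2^k))\leq \ve_1$ for $1\leq k\leq N$ while $\wt\beta_{2,\mu_N}(B(0,1))>\delta_0$.

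By Ahlfors regularity the family $\{\mu_N\}$ is locally uniformly bounded, so a subsequence satisfies $\mu_N\wto \nu$ on $\RR^m$, and $\nu$ inherits $n$-Ahlfors regularity at every scale. Crucially, because $\mu$ is asymptotically optimally doubling on $K$ and $r_N\to 0$, the doubling ratios $\mu_N(B(x,tr))/\mu_N(B(x,r))$ converge to $t^n$ uniformly for $x$ in compact subsets of $\supp\nu$ and $t\in[1/2,1]$. As in the iteration giving \eqref{eqn:1.100}, this upgrades to $\nu(B(x,tr))=t^n\nu(B(x,r))$ for all $t>0$; after a harmless normalization, $\nu$ is $n$-uniform in the sense of Theorem \ref{teopreiss}. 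The smoothness of $\vphi$ together with uniform Ahlfors regularity lets me pass $\wt\beta_{2,\mu_N}(B(0,2^k))\leq \ve_1$ to the limit, obtaining $\wt\beta_{2,\nu}(B(0,2^k))\leq \ve_1$ for every $k\geq 1$.

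Now take any weak subsequential limit $\lambda$ of $R^{-n}\nu(R\,\cdot)$ as $R\to\infty$, a tangent of $\nu$ at infinity. Scale-invariance preserves the bound at unit scale, so $\wt\beta_{2,\lambda}(B(0,1))\leq \ve_1$, and since $\vphi\geq \chi_{B(0,2)}\geq \chi_{B(0,1)}$, this controls the Preiss quantity in \eqref{infty}:
\[
\min_{L\in G(n,m)}\int_{B(0,1)}\dist(x,L)^2\,d\lambda(x)\leq \ve_1^2\leq \ve_0^2.
\]
Theorem \ref{teopreiss} then forces $\nu$ itself to be flat, i.e.\ $\nu = c\,\H^n\res L_0$ for some $n$-plane $L_0$, whence $\wt\beta_{2,\nu}(B(0,1))=0$. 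On the other hand, weak convergence $\mu_N\wto \nu$ combined with uniform Ahlfors regularity and smoothness of $\vphi$ yields $\wt\beta_{2,\mu_N}(B(0,1))\to \wt\beta_{2,\nu}(B(0,1))=0$, contradicting $\wt\beta_{2,\mu_N}(B(0,1))>\delta_0$.

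The main obstacle is verifying that the blow-up $\nu$ is genuinely $n$-uniform rather than merely Ahlfors regular; this is where the asymptotically optimally doubling hypothesis is used, crucially together with $r_N\to 0$ (itself forced by $2^N B_N\subset K$ and compactness of $K$). A secondary technical point is the continuity of $\wt\beta_2$ under weak convergence of uniformly Ahlfors regular measures, which relies on the smoothness of $\vphi$ together with a Grassmannian compactness argument to justify exchanging the infimum over $L\in G(n,m)$ with the weak limit. The integer $N$ enters only through the requirement that, after extraction, the tangent $\lambda$ of $\nu$ at infinity still inherit the scale-$1$ bound; having arbitrarily many good scales ($N\to\infty$) is exactly what provides this, and a standard effective extraction yields a quantitative $N$ depending on $\mu$, $K$, $C_0$, and $\delta_0$.
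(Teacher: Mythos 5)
Your proposal is correct and follows essentially the same route as the paper: argue by contradiction, blow up along a sequence of bad balls (whose radii tend to zero because $2^NB_N\subset K$ compact), pass to a weak limit which is $n$-uniform (the paper gets this from \cite{KT}, Theorem 2.2, rather than rerunning the doubling argument), transfer the $\wt\beta_2$ bounds to the limit and then to its tangent at infinity, and invoke Theorem \ref{teopreiss} to force flatness, contradicting the persistence of $\wt\beta_{2}(B(0,1))\gtrsim\delta_0$. The only caveat is that the various renormalizations (by $\mu(B_N)$ versus $r_N^n$, and the normalization of $\lambda$ so that $\lambda(B(x,r))=r^n$) cost multiplicative factors comparable to $C_0$, so one should take $\ve_1=c(C_0)\,\ve_0$ rather than $\ve_1=\ve_0$ exactly --- precisely the dependence the lemma statement allows.
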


\begin{proof}
Suppose that the integer $N$ does not exist. Then there exists a
sequence of points $\{x_j\}\subset K\cap\Sigma$ and balls $B_j:=
B(x_j,r_j)$ such that $2^jB_j\subset K$, and
$$\wt\beta_{2,\mu}(2^kB_j)\leq \ve_1\qquad \mbox{for
$1\leq k\leq j$},$$ but $\wt\beta_{2,\mu}(B_j)>\delta_0$. Clearly,
$r_j\to0$ as $j\to\infty$. For each $j\geq1$, consider the blow
up measure $\mu_j$ defined by
$$
\mu_j(A) = \frac{\mu(r_jA + x_j)}{\mu(B_j)}
$$
Extracting a subsequence if necessary, we may assume that
$\{\mu_j\}$ converges weakly to another measure $\nu$, which by
\cite{KT} [Theorem 2.2] is $n$-uniform. We claim that
\begin{equation}\label{claim1}
\wt\beta_{2,\nu}(B(0,2^k))\lesssim \ve_1 \qquad\mbox{for all
$k\geq0$}
\end{equation}
and
\begin{equation}\label{claim2}
\wt\beta_{2,\nu}(B(0,1))\gtrsim \delta_0.
\end{equation}

Assume the claim for the moment. It is easy to check that \rf{claim1}
implies that the tangent measure $\lambda$ of $\nu$ at $\infty$
satisfies
$$\min_{L\in G(n,m)} \int_{B(0,1)}\dist(x,L)^2d\lambda(x)  \leq \ve_0^2,$$
(assuming $\ve_1\leq\ve_0$ small enough) and so $\nu$ is flat by
Theorem \ref{teopreiss}. This contradicts \rf{claim2}, and the lemma
follows.

Let us prove \rf{claim1}. Let $B(0,2^k)$ be fixed. Extracting a
subsequence of $\{\mu_j\}$, we may assume that the $n$-planes $L_j$
which minimize $\wt\beta_{2,\mu_j}(B(0,2^k))$ converge in the
Hausdorff metric to another $n$-plane $L$, and then it easily
follows that
\begin{equation}\label{lim1}
\biggl| \int \vphi\biggl(\frac{|x|}{2^k}\biggr)
\dist(x,L_j)^2d\mu_j(x) - \int \vphi\biggl(\frac{|x|}{2^k}\biggr)
\dist(x,L)^2d\nu(x)\biggr| \to 0\qquad \mbox{as}\quad j\to\infty.
\end{equation}
Notice also that
\begin{align}
\frac1{2^{k(n+2)}} \int \vphi\biggl(\frac{|x|}{2^k}\biggr)
\dist(x,L_j)^2d\mu_j(x) & = \frac1{2^{k(n+2)}\mu(B_j)}\int
\vphi\biggl(\frac{|x-x_j|}{2^kr_j}\biggr)\,
\dist\Bigl(\frac{x-x_j}{r_j},L_j\Bigr)^2d\mu(x)\nonumber \\
& = \frac1{2^{k(n+2)}r_j^2\mu(B_j)}\int
\vphi\biggl(\frac{|x-x_j|}{2^kr_j}\biggr)\,
\dist(x,x_j+r_jL_j)^2d\mu(x) \nonumber\\
& \approx \frac1{(2^{k}r_j)^{n+2}}\int
\vphi\biggl(\frac{|x-x_j|}{2^kr_j}\biggr)
\dist(x,x_j+r_jL_j)^2d\mu(x) \nonumber\\
& \leq \ve_1^2,
\end{align}
since $x_j+r_jL_j$ is the $n$-plane that minimizes
$\wt\beta_{2,\mu}(B(x_j,2^kr_j))$. Inequality \rf{claim1} follows
from \rf{lim1} and the preceding estimate.

The proof of \rf{claim2} is analogous. Now let $L$ be
an arbitrary $n$-plane. Then we have
\begin{align}
\int \vphi(|x|)\,\dist(x,L)^2d\nu(x) & = \lim_{j\to\infty} \int
\vphi(|x|)\, \dist(x,L)^2d\mu_j(x) \nonumber\\
& = \lim_{j\to\infty}\frac1{\mu(B_j)} \int
\vphi\biggl(\frac{|x-x_j|}{r_j}\biggr)\,
\dist\Bigl(\frac{x-x_j}{r_j},L\Bigr)^2d\mu(x) \nonumber\\
& = \lim_{j\to\infty} \frac1{r_j^2\mu(B_j)} \int
\vphi\biggl(\frac{|x-x_j|}{r_j}\biggr)\, \dist(x,x_j+r_j L)^2d\mu(x)
\gtrsim \delta_0^2,
\end{align}
since $\wt\beta_{2,\mu}(B_j)>\delta_0$.
\end{proof}\qed\\

\noindent{\bf Proof of Theorem \ref{mth}:}
Let $\ve_1$ be the constant given by Lemma \ref{mlem}, and set
$\delta_0 = \min(\ve_1,\eta)$ (recall that $\ve_1$ is independent of
$\delta_0$). Let $N$ be the corresponding integer given by the same
lemma.

If $\delta$ is chosen small enough, then we clearly have
$\wt\beta_{2,\mu}(P)\leq\min(\ve_1,\eta)$ for any ball $P$ centered
at any point in $B\cap\Sigma$ with $r(P)\geq 2^{-N}r(B)$. By
the preceding lemma, by induction on $j\geq0$ we infer that
$\wt\beta_{2,\mu}(P)\leq\min(\ve_1,\eta)$ for any ball $P$ centered at
$B\cap\Sigma$ with
radius $r(P)$ such that $2^{-j-1}r(B) \leq r(P) \leq2^{-j}r(B)$
(where $r(B)$ stands for the radius of $B$). \qed\\

\noindent{\bf Proof of Theorem \ref{b2t}:} We argue by contradiction.
Suppose that there exists $\ve_1>0$ such that for each $i\ge i_0$
and each ball $B\subset B(0,2)$ centered in $K\cap \Sigma$,
$\wt\beta_{2,\mu}(B)\le 2^{-i}\le \ve_0$ but there are
$x_i\in \Sigma\cap B(0,1)$ and $r_i>0$ with $\lim_{i\rightarrow \infty}r_i=0$,
so that $\theta(x_i,r_i)\ge \ve_1$, i.e $\theta_{\Sigma_i}(0,1)\ge\ve_0$,
where $\Sigma_i=\frac{1}{r_i}(\Sigma -x_i)$.
Consider the blow up sequence $\{\mu_i\}$ defined by
\begin{equation}\label{blowup}
\mu_i(E)=\frac{\mu(r_iE+x_i)}{\mu(B(x_i,r_i))}
\end{equation}
Modulo passing to a subsequence Theorem 2.2 in \cite{KT} ensures that
$\mu_i$ converges weakly to a Radon measure $\mu_\infty$ which is
$n$-uniform. Moreover
$\Sigma_i$ converges in the Hausdorff distance
sense to $\Sigma_\infty= \supp\mu_\infty$ uniformly on compact subsets.
Therefore $\theta_{\Sigma_\infty}(0,1)\ge \epsilon_0/2$.
Statement (\ref{lim1}) guarantees that for $r>0$
$\tilde \beta_{2,\mu_i}(B(0,r))$ converges to
$\tilde\beta_{2,\mu_\infty}(B(0,r))$. Since  for $r>0$ there exists
$i_r$ so that
for $i\ge i_r$
$\tilde \beta_{2,\mu_i}(B(0,r))\le 2^{-i}$ then
$\tilde\beta_{2,\mu_\infty}(B(0,r))=0$ for every $r>0$.
Thus the support of $\mu_\infty$, $\Sigma_\infty$ is contained in an $n$-plane.
Since $\mu_\infty$ is $n$-uniform (and flat at infinity),
then $\Sigma_\infty$ is an $n$-plane, which contradicts the fact that
$\theta_{\Sigma_\infty}(0,1))\ge \epsilon_0/2$. \qed

\vskip 1in

\noindent\footnotesize{David Preiss, Mathematics Institut, University
of Warwick, Coventry CV4 7AL, UK.\\
 E-mail:d.preiss@warwick.ac.uk}\\

\noindent\footnotesize{Xavier Tolsa, ICREA and Departament de Matem\`atiques,
Universitat Aut\`onoma de Barcelona, 08193 Bellaterra . Barcelona,
Catalonia.\\
 E-mail: xtolsa@mat.uab.cat}\\

\noindent\footnotesize{Tatiana Toro,
Department of Mathematics,University of Washington,
Box 354350, Seattle, WA 98195-4350.\\
 E-mail: toro@math.washington.edu}

\end{document}